\definecolor{rouge}{rgb}{0.85,0.1,.4}
\definecolor{bleu}{rgb}{0.1,0.2,0.9}
\definecolor{violet}{rgb}{0.7,0,0.8}
\newcommand{\ket}{{\rangle}}
\newcommand{\cprime}{$'$}
\newcommand{\on}{\operatorname}
\newcommand{\mc}{\mathcal}
\newcommand{\mf}{\mathfrak}
\newcommand{\affg}{\widehat{\mf{g}}}
\newcommand{\isomap}{\stackrel{\simeq\,}{\longrightarrow}}
\newcommand{\Z}{\mathbb{Z}}
\newcommand{\C}{\mathbb{C}}
\newcommand{\W}{\mathcal{W}}  
\newcommand{\ra}{\rightarrow}
\newcommand{\lam}{\lambda}
\newcommand{\vac}{{|0\rangle}}
\newcommand{\haru}{\operatorname{span}}
\newcommand{\LL}{\mc{L}}
\def\g{\mathfrak{g}}
\def\z{\mathfrak{z}}
\def\h{\mathfrak{h}}
\def\n{\mathfrak{n}}
\def\m{\mathfrak{m}}
\def\p{\mathfrak{p}}
\def\Slo{\mathscr{S}}
\def\Zent{{\mathcal{Z}}}
\def\J{J_\infty}
\def\P{\mathscr{P}}
\def\sl{\mathfrak{sl}}
\def\d{d}
\def\W{\mathcal{W}}
\def\P{\mathscr{P}}
\def\V{\mathscr{V}}
\def\Core{\mathscr{C}}
\def\le{\leqslant}
\def\ge{\geqslant}
\DeclareMathOperator{\End}{End}
\DeclareMathOperator{\Spec}{Spec}
\DeclareMathOperator{\Specm}{Specm}
\DeclareMathOperator{\gr}{gr}
\DeclareMathOperator{\Hom}{Hom}
\theoremstyle{theorem}
\newtheorem{Th}{Theorem}[section]
\newtheorem{Pro}[Th]{Proposition}
\newtheorem{Lem}[Th]{Lemma}
\newtheorem{Co}[Th]{Corollary}
\theoremstyle{remark}
\newtheorem{Def}[Th]{Definition}
\newtheorem{Rem}[Th]{Remark}
\newlength{\larg}
\title{Arc spaces and chiral symplectic cores}
\date{\today}
\subjclass[2010]{}
\keywords{}
\author[Tomoyuki Arakawa]{Tomoyuki Arakawa\textsuperscript{1}}
\address{\textsuperscript{1}Research Institute for Mathematical Sciences
\\ Kyoto University\\ 
 Kyoto 606-8502 JAPAN}
\email{arakawa@kurims.kyoto-u.ac.jp}
\author[Anne Moreau]{Anne Moreau\textsuperscript{2}}
\address{\textsuperscript{2}Laboratoire Paul Painlev\'{e}\\ 
Universit\'{e} de Lille 
\\ 59655 Villeneuve d'Ascq Cedex\\ France}
\email{anne.moreau@univ-lille.fr}
\begin{document}

\maketitle

\begin{center}
{\em Dedicated to the 70th birthday of Professor Masaki Kashiwara}
\end{center}

\bigskip

\begin{abstract}
We introduce the notion of chiral symplectic cores in a vertex Poisson 
variety,   
which can be viewed as analogs of symplectic leaves in Poisson varieties. 
As an application we show that any quasi-lisse vertex algebra is a quantization of the arc space of its associated variety, in the sense that its reduced singular support coincides with the 
reduced arc space of its associated variety.
We also show that the coordinate ring of the arc space of Slodowy slices is free over
its vertex Poisson center, and the latter
coincides with the vertex Poisson center
of the coordinate ring of the arc space of the dual of the corresponding simple Lie algebra. 
%As a consequence, the center of the affine W-algebra at the critical 
%level coincides with the Feigin-Frenkel center.
\end{abstract}

\section{Introduction}
Any vertex algebra is canonically filtered \cite{Li05}, 
and hence can be viewed as a quantization of its associated graded vertex  Poisson  algebra.
Since the structure of a vertex algebra is usually quite complicated, it is often very useful to reduce a problem of a vertex algebra to that of the 
geometry of
the associated vertex Poisson scheme,
that is,
the spectrum of the 
 associated  graded vertex  Poisson  algebra (see e.g. 
 \cite{Fre07,Ara09b,A2012Dec}).
Since a vertex Poisson scheme  can be regarded as a chiral analogue of a Poisson scheme,
it is natural to try to upgrade notions in Poisson geometry to the setting of vertex Poisson schemes.
We note that the {\em arc space} $J_{\infty}X$ of an affine Poisson scheme $X$ is a basic example of 
vertex Poisson  schemes (\cite{Ara12}).

In \cite{Brown-Gordon} Brown and Gordon introduced the notion of {\em symplectic cores}
in a Poisson variety 
{which is expected to be} 
the finest possible {\em algebraic} stratification in which the Hamiltonian vector fields
are tangent,
and showed that the symplectic cores
in fact
coincide with
the symplectic leaves if there is only finitely many numbers of symplectic leaves.
In this paper we introduce the notion of {\em chiral symplectic  cores}
in a vertex Poisson scheme,
which we expect to be the 
finest possible algebraic stratification in which the {\em chiral} Hamiltonian vector fields
are tangent.

We have two major applications of the notion of chiral symplectic   cores.

First, recall that 
a vertex algebra $V$ is called {\em quasi-lisse}
if its associated variety $X_V$ has 
finitely many symplectic leaves (\cite{Arakawam:kq}).
For instance, a simple affine vertex algebra $V$ associated with 
a simple Lie algebra $\g$ is quasi-lisse if and only if $X_V$ is contained in the nilpotent cone of $\g$.
Therefore \cite{Ara09b},
simple admissible affine vertex algebras are quasi-lisse. 
We refer to \cite{AM15,AraMor17,AraMor16b} for 
other examples of simple quasi-lisse vertex algebras. 
Furthermore,
all the   vertex algebras obtained from {\em four-dimensional}  $\mathcal{N}=2$ superconformal field theories 
(\cite{BeeLemLie15}) 
are expected to be quasi-lisse (\cite{A.Higgs,BeeRas})
see e.g.\ \cite{BPRvR,LP,SXY,BKN,Cre,BMR,A6} for examples of vertex algebras obtained from 4d $\mathcal{N}=2$ SCFTs.
It is also believed in physics that there exist
Higgs branch vertex algebras and Column branch vertex algebras
in {\em three-dimensional} gauge theories that 
are expected to be quasi-lisse as well (\cite{CCG}).

We show that any quasi-lisse vertex algebra $V$ is a quantization of the {reduced arc space 
of its associated variety, in the sense that its reduced singular support 
$\Specm (\gr V)$ coincides with 
$\J  X_V$ as topological spaces (Theorem~\ref{Th:quasi-lisse})}. 
%\commentA{It looked strange to write $(\J  X_V)_{\on{red}}$ because in these theorems, 
%there is no $(\J  X_V)_{\on{red}}$ but simply $\J X_V$: 
%any other suggestion? is it ok for you?}
Moreover, for a quasi-lisse vertex algebra $V$, we show that 
each irreducible component of 
$\J  X_V$ (there are finitely many of them) is a symplectic core closure 
(Theorem~\ref{Th:quasi-lisse}). 
%Here, we denote by $Z_{\on{red}}$ the reduced scheme of a scheme $Z$.

Second,
let $\g$ be a complex simple Lie algebra with adjoint group $G$. 
We identify $\g$ with its dual $\g^*$ through the Killing 
form of $\g$. 
Denote by $\Slo_f$ the {\em Slodowy slice} 
$f+\g^{e}$ associated with an $\sl_2$-triple $(e,h,f)$ of $\g$. 
%Note that $e$ and $f$ are nilpotent elements of $\g$ 
%in the same $G$-orbit. 
The affine variety 
 $\Slo_f$ has a Poisson structure obtained from that of $\g^*$ 
by Hamiltonian reduction \cite{GanGin02}. 
%The Hamiltonian reduction can also be described in terms of 
%BRST cohomology, essentially following Kostant and Sternberg \cite{KosSte87}. 
Consider 
the adjoint quotient morphism
$$\psi_f \colon \Slo_f \to \g^*/ \!/ G.$$
It is known \cite{Pre02} that
any  fiber $\psi_f^{-1}(\xi)$ of this morphism  is the closure of a symplectic leave,
which 
is irreducible and reduced.
We show that 
any fiber of the 
 induced vertex Poisson algebra morphism
$$\J\psi_f \colon \J\Slo_f \to  \J(\g^*/ \!/ G)%,\quad x \mapsto (p_i(x),\ldots,p_\ell(x)).
$$ 
%of vertex Poisson schemes
%is the closure of a chiral symplectic  core,
%which 
is an irreducible and reduced chiral Poisson subscheme of $\J \Slo_f$. 
This result enables us to show that
the morphism $(\J \psi_f )^*$ induces an {\em isomorphism} 
of vertex Poisson algebras 
between $\C[\J \g^*]^{\J G}$ and the vertex  Poisson center of 
$\C[\J \Slo_f]$,
and
 that $\C[\J \Slo_f]$
 is free over its vertex Poisson center (Theorem \ref{Th:main-result}). 
  As a consequence, we obtain that 
the center of the {\em affine W-algebra} $\W^{cri}(\g,f)$ 
associated with $(\g,f)$ at the critical level is identified with the 
{\em Feigin-Frenkel center} 
$\z(\affg)$, that is, the center 
of the affine vertex algebra $V^{cri}(\g)$ at the critical level 
(cf.~Theorem \ref{Th:W-algebra}). 
This later fact was claimed in \cite{A11} but the proof was incomplete. 
We take the opportunity of this work to clarify this point. 

\subsection*{Acknowledgements} The authors are very grateful to Kenneth Brown 
for explaining his paper with Iain Gordon. The second author also thanks 
Jean-Yves Charbonnel for useful comments. 

The first named author is partially supported by  JSPS KAKENHI Grant Numbers 17H01086, 17K18724.
The second named author is supported in part by the ANR Project GeoLie Grant number ANR-15-CE40-0012, and in part by the Labex CEMPI  (ANR-11-LABX-0007-01). 

\subsection*{Notations}
The topology is always the Zariski topology. 
So the term {\em closure} always refers to the Zariski closure. 

\section{Vertex algebras} \label{sec:vertex-algebras}
Let $V$ be a vector space over $\C$. 
%Denote by $({\rm End}\,V)[[z,z^{-1}]]$ the set of all formal Laurent 
%series in the variable $z$ with coefficients in the space ${\rm End}\,V$. 
%%We call elements $a(z)$ of $({\rm End}\,V)[[z,z^{-1}]]$ a {\em series on $V$}. 
%For $a(z)\in ({\rm End}\,V)[[z,z^{-1}]]$, we set 
%$$a_{(n)} = {\rm Res}_{z=0}a(z)z^n$$
%so that the expansion of $a(z)$ is 
%$$a(z)= \sum_{n \in \Z} a_{(n)} z^{-n-1}.$$
%
%An element $a(z) \in ({\rm End}\,V)[[z,z^{-1}]]$ is called a {\em field} 
%on $V$ if  for any $b \in V$, $a(z)b \in V((z))$, that is, 
% for any $b\in V$, $a_{(n)}b=0$ for large enough $n$. 
%Denote by $\F(V)$ the space of all fields on $V$.

\begin{Def}
The vector space $V$ is called a {\em vertex algebra} if 
it is equipped with 
 the following data: 
 \begin{itemize}
 \item ({\em the vacuum vector}) a vector $|0\ket \in V$, 
 \item ({\em the vertex operators}) a linear map 
 $$V \to ({\rm End}\,V)[[z,z^{-1}]], \quad a \mapsto 
 a(z)=\sum_{n\in\Z} a_{(n)}z^{-n-1},$$
 such that for all $a,b \in V$, 
 $a_{(n)}b=0$ for $n$ sufficiently large. 
 \item ({\em the translation operator}) a linear map 
 $T \colon V \to V$.  
 \end{itemize}
These data are subject to the following axioms: 
\begin{itemize}
\item $|0\ket (z)={\rm id}_V$. 
Furthermore, for all $a \in V$, $a(z)|0\ket \in V[[z]]$ and 
\hbox{$\lim\limits_{z\ra 0}a(z)|0\ket =a$.} 
%In other words, $a_{(n)}|0\ket=0$ for $n\ge 0$ and 
%$a_{(-1)}|0\ket=a$. 
\item for any $a \in V$, 
$$[T,a(z)]=\partial_z a( z),$$%\qquad (Ta)(z)= \partial_z a( z)$$
and $T|0\ket=0$.
\item for all $a,b\in V$,
$(z-w)^{N_{a,b}}[a(z),b(w)]=0$ for some $N_{a,b}\in \Z_{\ge 0}$. 
 \end{itemize}
 \end{Def}
Assume from now that $V$ is a vertex algebra. 
A consequence of the definition are
the following relations, called {\em Borcherds identities}:
\begin{align}
 &[a_{(m)}, b_{(n)}]
 =\sum_{i\ge 0}\begin{pmatrix}
				 m\\i
		\end{pmatrix}(a_{(i)}b)_{(m+n-i)}, 
\label{eq:com-formula}\\ \label{eq:com-formula2}
&(a_{(m)}b)_{(n)}=\sum_{j\ge 0}(-1)^j
\begin{pmatrix}
m\\j
\end{pmatrix} (a_{(m-j)}b_{(n+j)}-(-1)^mb_{(m+n-j)}a_{(j)}), 
\end{align}
for $m,n\in\Z$. 

A {\em vertex ideal} $I$ of $V$ is a $T$-invariant subspace of $V$ 
such that $a_{(n)}b \in I$ for all $a \in I$, $b \in V$. 
By the {\em skew-symmetry 
property} which says that for all $a,b \in V$, the identity 
$$a(z)b=e^{zT}b(-z)a$$ holds in $V((z)),$ 
 a vertex ideal $I$ of $V$ is also a $T$-invariant subspace of $V$ 
such that $b_{(n)}a \in I$ for all $a \in I$, $b \in V$. 

%Note that  vertex ideals are the same as 
%submodules of adjoint modules by \eqref{eq:skew-symmetry}.
%A module whose only submodules are 0 and itself is called 
%{\em simple}. 

The vertex algebra $V$ is called {\em commutative}
if all vertex operators $a(z)$, $a \in V$, commute each other, 
%(i.e., we have $N_{a,b}=0$ in the locality axiom),
that is,
%
%This condition is equivalent to that
\begin{align*}
 [a_{(m)}, b_{(n)}]=0,\qquad\forall a,b\in \Z,\ m,n\in \Z.
\end{align*}
By 
\eqref{eq:com-formula},
%It is also equivalent to that 
%\begin{align*}
%[a_{\lam}b]=0,\qquad \forall a,b\in V,
%\end{align*}
%or else that,
%$a_{(n)}=0$ for $n \ge 0$ in $\End V$ for all $a\in V$.
$V$ is a commutative vertex algebra if and only if
$a(z) \in \End V[[z]]$ for all $a \in V$. 

A commutative vertex algebra has a structure of a unital commutative 
algebra with the product: 
\begin{align*}
 a \cdot b=a_{(-1)}b,
\end{align*}
where the unit is given by the vacuum vector $|0\ket$.
The translation operator $T$ of $V$
acts on $V$ as a derivation with respect to this product:
\begin{align*}
 T(a\cdot  b)=(Ta)\cdot b+a\cdot (Tb).
\end{align*}
Therefore 
a commutative vertex algebra 
has the structure of a
 {\em differential algebra},
that is,
 a unital commutative algebra
equipped with a derivation.

Conversely,
there is a unique vertex algebra structure on
a differential algebra $R$ with derivation $\partial$ 
such that: 
\begin{align*}
a(z)b= \left(e^{z \partial}a\right)b =\sum_{n\ge 0} 
\displaystyle{\frac{z^n}{n!}} (\partial^n a) b,
\end{align*}
for all $a,b\in R$. We take the unit as the vacuum vector. 
This correspondence gives that the 
%\begin{Th}[\cite{Bor86}] \label{Th:Borcherds-differential}
category of commutative vertex algebras
is the same as that of differential algebras \cite{Bor86}.
%\end{Th}

%\subsection{Arc spaces}
%One important example of commutative vertex algebras 
%are obtained by considering the function sheaf over arc spaces 
%of a scheme. 

\section{Jet schemes and arc spaces} \label{sec:arc}
Our main references about jet schemes 
and arc spaces are \cite{Mus01,EinMus,Ishii}.

Denote by $Sch$ the category of schemes of finite 
type over $\C$. 
%\marginpar{\tiny{In Pisa you don't assume that it is of finite type.}}
Let $X$ be an object of this category, 
and $n \in\Z_{\ge 0}$. 

\begin{Def} \label{d:jet} 
An {\em $n$-jet of $X$} is a morphism
$$\Spec \C[t]/(t^{n+1}) \longrightarrow X.$$
The set of all $n$-jets of $X$ carries the structure of a 
scheme $J_n X$, 
called the {\em $n$-th jet scheme of $X$}. 
It is a scheme of finite type over $\C$ characterized by the following functorial property: 
for every scheme $Z$ over $\C$, we have  
$$\Hom_{Sch}(Z,J_n X) = \Hom_{Sch}(Z\times_{\Spec\C} 
\Spec \C[t]/(t^{n+1}),X). 
$$
\end{Def}
The $\C$-points of $J_n X$ are thus 
the $\C[t]/(t^{n+1})$-points of $X$. 
From Definition~\ref{d:jet}, we have for example that $ J_0 X \simeq X$ 
and that $ J_1 X\simeq {\rm T}X$, where ${\rm T}X$ 
denotes the total tangent bundle of $X$. 

The canonical projection $\C[t]/(t^{m+1})\to \C[t]/(t^{n+1})$, $m \ge n$,  
induces a {\em truncation morphism} 
$$\pi^{X}_{m,n}\colon J_m X \rightarrow  J_n X.$$ 
%We shall denote by $\pi ^{X}_{m}$, or simply by $\pi_m$ if no confusion 
%is possible, 
%the morphism
%$$\pi ^{X}_{m} \colon J_m X \rightarrow  J_0(X)\simeq X. $$ 
%The canonical injection $\C \hookrightarrow \C[t]/(t^{n+1})$ 
%induces a morphism $\iota ^{X}_{n} \colon X \to J_n X$, and we have 
%$ \pi ^{X}_{n,0} \circ  \iota ^{X}_{n}={\rm id}_X$. Hence $ \iota ^{X}_{n}$ 
%is injective and  $\pi ^{X}_{n,0}$ is surjective. 
%We shall always view $X$ as a subscheme of $J_m X$. 

Define the  (formal) disc
as
\begin{align*}
 D:=\Spec \C[[t]].
\end{align*} 
The projections $\pi ^{X}_{m,n}$ yield a projective system 
$\{J_m X, \pi ^{X}_{m,n}\}_{m\ge n}$ of schemes. 
\begin{Def}
Denote by $ J_{\infty} X$ its projective limit 
in the category of schemes, 
$$ J_{\infty} X= \varprojlim J_n X.$$
It is called the {\em arc space}, or {\em the infinite jet scheme},  
of $X$. 
%\red{Let $\J X$ denote the reduced scheme of $J_{\infty}X$.}
\end{Def}
Thus elements of $ \J X$ are the morphisms  
$$\gamma \colon D \to X,$$
and for every scheme $Z$ over $\C$, 
$$\Hom_{Sch}(Z,J_\infty X) = \Hom_{Sch}(Z\widehat{\times}_{\Spec\C} 
D,X),$$
where $Z\widehat{\times}_{\Spec\C} 
D$ means the formal completion of $Z{\times}_{\Spec\C} 
D$ along the subscheme 
$Z{\times}_{\Spec\C} \{0\}$. 
In other words, the contravariant functor 
$$Sch \to Set, \quad Z\mapsto \Hom_{Sch}(Z\widehat{\times}_{\Spec\C}  D,X)$$ 
is represented by the scheme $J_\infty X$. 
%The reason why we need the completion $Z\widehat{\times} D$
%in the definition is that, for $A$ an algebra, 
%$A\* \C[[t]]\subsetneqq A[[t]]=A\widehat{\otimes} \C[[t]]$ in general. 

We denote by $\pi ^{X}_{\infty,n}$ %, or simply by $\pi_\infty$,  
the morphism: 
$$\pi ^{X}_{\infty,n} \colon  J_\infty X \to J_n X.$$ 
It is surjective if $X$ is smooth. 
The canonical injection $\C \hookrightarrow \C[[t]]$  
induces a morphism $\iota ^{X}_{\infty} \colon X \to \J X$, and we have 
$ \pi ^{X}_{\infty,0} \circ  \iota ^{X}_{\infty}={\rm id}_X$. Hence $ \iota ^{X}_{\infty}$ 
is injective and  $\pi ^{X}_{\infty,0}$ is surjective (for any $X$). 
Similarly, the canonical injection $\C \hookrightarrow \C[t]/{(t^{n+1})}$  
induces a morphism $\iota ^{X}_{n} \colon X \to J_n X$, and we have 
$ \pi ^{X}_{n,0} \circ  \iota ^{X}_{n}={\rm id}_X$. Hence $ \iota ^{X}_{n}$ 
is injective and  $\pi ^{X}_{n,0}$ is surjective (for any $X$). 

When the variety $X$ is obvious, we simply write $\pi_{m,n}, 
\pi_{\infty,n}$, $ \iota_{n},\iota _{\infty}, \ldots$ 
for $\pi_{m,n}^X, \pi_{\infty,n}^X$, $\iota ^{X}_{n}, \iota ^{X}_{\infty},\ldots$. 

%In the case where $X$ is affine, we have the following 
%explicit description of $ \J X$. 
%We describe similarly $J_n X$.
 
In the case where $X =\Spec \C[x^1,\ldots,x^N] \cong \mathbb{A}^N$, 
$N\in \Z_{>0}$, is an affine space, 
we have the following explicit description of $ J_{\infty} X$. 
Giving a morphism
$\gamma \colon D \to \mathbb{A}^N$ is equivalent to giving a morphism 
$\gamma^*\colon \C[x^1,\ldots,x^N] \to\C[[t]]$, 
or  to giving 
$$\gamma^*(x^{i}) =\sum\limits_{j \ge 0} \gamma^{i}_{(-j-1)} t^j ,
\qquad i=1,\ldots,N.$$
Define functions over $J_{\infty} \mathbb{A}^N$ by setting for $i=1,\ldots,N$:  
$$x^{i}_{(-j-1)} ({\gamma}) = j! \gamma^{i}_{(-j-1)}.$$
Then 
$$ J_{\infty} \mathbb{A}^N = \Spec \C [ x^{i}_{(-j-1)}  \, ;\,  i=1,\ldots,N,\, j \ge 0].$$ 
%\begin{Def} \label{Def:derivation-arc}
Define a derivation $T$ of the algebra $\C[ x^{i}_{(-j-1)} \; ;\; 
i=1,\ldots,N, \, j\ge 0]$ by 
$$T x^{i}_{(-j)} =  j x^{i}_{(-j-1)},\qquad j > 0.
$$ 
%Then $f_k^{(j)} = T^{j} f_k$ for $k=1,\ldots,r$ and $j\ge 0$. 
Here we identify $x^{i}$ with $x^{i}_{(-1)}$. 
%\end{Def}

More generally, 
if $X \subset \mathbb{A}^N$ is an affine 
subscheme defined by an ideal $I =( 
f_1,\ldots,f_r)$ of $\C[x^1,\ldots,x^N]$, that is, 
$X=\Spec R$ 
with 
$$R= {\C[x^1, x^2, \cdots, x^N]}/{\left( f_1, f_2, \cdots, f_r\right)},$$ 
then its arc space
 $ J_\infty X$ is
the affine scheme
$\Spec ( J_\infty R)$, 
where
 \begin{align}
 \label{eq:arc-ring} J_\infty R:= \frac{\C[x^{i}_{(-j-1)} \; ;\;  
 i=1, 2, \cdots, N,\, j\ge 0]}{\left(  T^j f_i \; ;\;  i=1, \ldots, r,\, j\ge 0
\right)}, 
 \end{align}
and $T$ is as defined above. 

Similarly, we have for any  $n \in\Z_{\ge 0}$,
 \begin{align}
 \label{eq:arc-jets} 
 J_n R:= \frac{\C[x^{i}_{(-j-1)} \; ;\;  
 i=1, 2, \cdots, N,\, j=0,\ldots,n]}{\left( T^j f_i \; ;\;  i=1, \ldots, r,\, j=0,
 \ldots,n \right)}.
  \end{align}

The derivation $T$ acts on the quotient ring $ J_\infty R$ 
given by \eqref{eq:arc-ring}.
Hence
for an affine scheme $X=\Spec R$,
the coordinate ring  $J_\infty R=\C[J_{\infty} X]$
of its arc space $ J_\infty X$ 
is a differential algebra, hence is a commutative vertex algebra.  

\begin{Rem}[{\cite{EinMus}}]\label{Rem:universal property of JR}
The differential algebra $( J_\infty R,T)$ 
is universal in the following sense. 
We have a $\C$-algebra homomorphism 
$j \colon R \to  J_\infty R$ such that 
if $(A,\partial)$ is another differential algebra, 
and if $f \colon R \to A$ is a $\C$-algebra 
homomorphism, then there is a unique 
differential algebra homomorphism 
$h \colon  J_\infty R \to A$ making the following 
diagram commutative: 
\begin{align*}
\xymatrix{R \ar[rr]^{j} \ar[rd]_{f} && ( J_\infty R,T) 
\ar@{-->}[ld]^{h}\\ 
&(A,\partial)&}
\end{align*}
\end{Rem}

The map from a scheme to its $n$-th jet schemes and arc space is functorial. 
If $f \colon X \to Y$ is a morphism of schemes, then we naturally obtain 
a morphism 
$J_n f \colon J_n X \to J_n Y$ making the following diagram commutative,

\smallskip
\begin{center}
~\xymatrix{J_n X \ar[r]^{J_n f }\ar[d]_{\pi ^{X}_{n,0}} & J_n Y \ar[d]^{\pi_{n,0}^Y}\\
X \ar[r]_{f} & Y }
\end{center}
%In terms of arcs, it means that 
%$J_n f  (\alpha)= f \circ \alpha$ for $\alpha \in J_n X$. 

We also have the following 
for every schemes $X,Y$, 
\begin{align} \label{eq:product-jets}
J_n(X\times Y)\cong J_n X\times J_n Y. 
\end{align}
If $A$ is a group scheme over $\C$, 
then $J_n A$ is also a group 
scheme over $\C$. Moreover, 
by (\ref{eq:product-jets}), if $A$ acts on $X$,  
then $J_n A$ acts on $J_n X$. 

%\commentA{Is it OK for this paragraph?}
From now on, whenever dealing with the schemes $J_n X$ and $\J X$ 
we will restrict to their $\C$-valued points, unless otherwise specified. 
Since the ground field is $\C$, $\C$-valued points corresponds to maximal 
ideals \cite[Proposition 2.10]{Ishii_toric}. 
%For $J_n X$ this causes no ambiguity
%since this is a scheme of finite type over $\C$. 
%For $\J X$, it means that we consider elements of $\J X$ as 
%the morphisms from $D= \Spec \C[[t]]$ to $X$.  

Denote by $X_{\on{red}}$ the reduced scheme of $X$.
Since $\C[[t]]$ is a domain, we have
$\Hom(\Spec \C[[t]],X) = \Hom(\Spec \C[[t]],X_{\on{red}})$. 
Hence, the natural morphism $X_{\on{red}}\ra X$ induces 
an isomorphism $ \J X_{\on{red}}\isomap \J X$
of topological spaces.
(Note that the analogous assertion is false for the spaces $J_n X$.) 
Similarly, if $X = X_1 \cup \ldots \cup X_r$, where
all $X_i$ are closed in $X$, then
$$\J X = \J X_1 \cup \ldots \cup \J X_r.$$%}

%The next results are specific to the arc space $\J X$. 
%\begin{Lem}\label{prp:homeo} 
%%Denote by $X_{\on{red}}$ the reduced scheme of $X$.
%The natural morphism $X_{\on{red}}\ra X$ induces 
%an isomorphism \red{$ J X_{\on{red}}\isomap \J X$ }
%of topological spaces.
%\end{Lem}

%\begin{proof}
%We may assume that $X=\Spec R$. 
%An arc $\alpha$ of $X$ corresponds to a ring homomorphism
%$\alpha^* \colon R \ra \C[[t]]$. Since $\C[[t]]$ is an integral domain, 
%it decomposes as $\alpha^* \colon R \ra R/\sqrt{0} \ra \C[[t]]$.
%Thus, $\alpha $ is an arc of $X_{\on{red}}$.
%\end{proof}

%Similarly, if $X=X_1 \cup \ldots \cup X_r$, 
%where all $X_i$ are closed in $X$, then 
%$$\red{J X=
%\J X_1 \cup \ldots \cup \J X_r.}$$ 
%(Note that Lemma \ref{prp:homeo} is false 
%for the schemes $J_n X$.) 

%If $X$ is  a point, 
%then $ \J X$ is also a point,
%since
%$\Hom(D,X)=\Hom(\C,\C[[t]])$ consists of only one element.
%Thus, Lemma \ref{prp:homeo} implies the following.
%\begin{Co}\label{Co:jets-0-dim}
%If $X$ is zero-dimensional then $ \J X$ is also zero-dimensional.
%\end{Co}

Moreover, we have the following result 
(which is false for the jet spaces $J_n X$).

\begin{Th}[Kolchin {\cite{Kol73}}] \label{Th:Kolchin}
The arc space $ \J X$ is irreducible 
if $X$ is irreducible. 
\end{Th}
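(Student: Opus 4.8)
The plan is to reduce to the affine integral case, dispose of the smooth case by a direct-limit argument, and then reduce the general case to the density of $\J U$ for $U$ the smooth locus, the hard point being a smoothing of arcs trapped in the singular locus.

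\emph{Reductions.} Since $\C[[t]]$ is a domain, the homeomorphism $\J X_{\on{red}}\isomap \J X$ recalled above lets me assume $X$ is reduced, so that irreducibility of $X$ means $X$ is integral. Covering $X$ by affine opens $X_i$, the sets $\J X_i=\pi_{\infty,0}^{-1}(X_i)$ are open in $\J X$ and pairwise overlap (they share arcs based in the nonempty $X_i\cap X_j$); since a union of irreducible opens with pairwise nonempty intersections is irreducible, I may assume $X$ affine and integral, say $X=\Spec R$ with $d=\dim X$, and argue by induction on $d$ (the case $d=0$ being trivial).

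\emph{The smooth case.} Suppose $X$ is smooth. As jets commute with \'etale maps and $X$ is \'etale-locally $\mathbb{A}^d$, each truncation $\pi_{n+1,n}\colon J_{n+1}X\to J_n X$ is a Zariski-locally trivial fibration with fibre $\mathbb{A}^d$; hence every $J_n X$ is smooth and irreducible, so $\C[J_n X]$ is an integral domain. Because $\pi_{n+1,n}$ is dominant, the transition maps $\C[J_n X]\hookrightarrow \C[J_{n+1}X]$ are injective, and $\C[\J X]=\varinjlim_n \C[J_n X]$ is a filtered union of integral domains, hence an integral domain. Therefore $\J X$ is irreducible when $X$ is smooth.

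\emph{Reduction to density.} Now let $X$ be integral but possibly singular. By generic smoothness in characteristic zero, $U:=X\setminus\Sing X$ is a dense open and $F:=\Sing X$ is closed with $\dim F<d$. Since $D=\Spec\C[[t]]$ is local, an arc has base point in $U$ exactly when it factors through $U$, so $\J U=\pi_{\infty,0}^{-1}(U)$ is open in $\J X$ and, by the smooth case, irreducible. As any two nonempty opens of an irreducible space meet, $\J X$ is irreducible if and only if $\overline{\J U}=\J X$; thus it suffices to show that every arc lies in $\overline{\J U}$.

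\emph{Density, and the main obstacle.} I split the arcs into those (a) whose generic point lands in $U$ and those (b) factoring through $F$. For (a), I would fix a resolution of singularities $p\colon\widetilde X\to X$ (Hironaka, characteristic zero) that is an isomorphism over $U$, with $\widetilde X$ smooth and $p$ proper; the valuative criterion of properness then lifts such an arc $\gamma$ uniquely over $\Spec\C((t))$ and hence over $\Spec\C[[t]]$ to some $\widetilde\gamma\in\J\widetilde X$. Since $\J\widetilde X$ is irreducible by the smooth case and its open subset of arcs based over $U$ is dense and maps into $\J U$, continuity of $J_\infty p$ gives $\gamma=J_\infty p(\widetilde\gamma)\in\overline{\J U}$. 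For (b), the induction hypothesis applied to the components of $F$ shows $\J F$ is a finite union of irreducible closed subsets; to place an arc $\gamma$ contained in $F$ into $\overline{\J U}$, I would construct a \emph{wedge}, that is, a $\C[[s]]$-point $w$ of $\J X$ (equivalently a homomorphism $R\to\C[[s]][[t]]$ reducing to $\gamma^*$ modulo $s$) whose special fibre is $\gamma$ and whose generic fibre is an arc not contained in $F$, so that $\gamma=w(0)$ specializes from an arc of type (a) and hence lies in $\overline{\J U}$. Building this deformation --- keeping the family inside the singular variety $X$ while pushing the arc off $F$ --- is the heart of the argument and the step I expect to be the main obstacle, and it is precisely here that characteristic zero, through resolution and generic smoothness, is indispensable.
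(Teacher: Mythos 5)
Your reductions, your smooth case, and your case (a) are all correct and standard (modulo two harmless points: $U$ and $\widetilde X$ are not affine, which your covering argument handles, and your tacit restriction to $\C$-valued arcs needs the uncountability of $\C$ to yield density of closed points). But the paper offers no proof of this theorem---it quotes it from Kolchin \cite{Kol73}---so your argument must stand on its own, and it does not: case (b), the arcs lying entirely inside $F=\Sing X$, is exactly where the theorem lives, and you leave it as an announced construction rather than a proof. The finite-level decomposition recalled in Section 3 of the paper, $J_n X=\pi_{n,0}^{-1}(X_{sing})\cup\overline{\pi_{n,0}^{-1}(X_{reg})}$, shows that jet schemes of an irreducible $X$ genuinely can acquire extra irreducible components over the singular locus; Kolchin's theorem is precisely the assertion that this does not happen for arcs, i.e.\ precisely the density claim you postpone. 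So the missing step is not a technical remainder: it is the entire content of the theorem beyond the smooth case.

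Worse, the repair you propose---a wedge $w\colon\Spec\C[[s,t]]\to X$ with special arc $\gamma$ and generic arc not contained in $F$---can simply fail to exist, so this route cannot be completed as stated. Take the Whitney umbrella $X=\{x^2=y^2z\}\subset\mathbb{A}^3$, so $F=\{x=y=0\}$, and $\gamma=(0,0,t)\in\J F$. A wedge through $\gamma$ is a triple $x_s,y_s,z_s\in\C[[s,t]]$ with $x_s^2=y_s^2z_s$ and $(x_s,y_s,z_s)\equiv(0,0,t)\bmod s$. If $y_s\neq0$ then, since $\C[[s,t]]$ is a UFD, the relation forces every prime factor of $z_s$ to occur with even multiplicity, so $z_s$ is a unit times a square; reducing modulo $s$ would exhibit $t$ as a unit times a square in $\C[[t]]$, which is absurd. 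Hence $y_s=0$, and then $x_s=0$: every wedge through $\gamma$ stays inside $\J F$ (the same computation works over any coefficient field $K[[s]]$). Nevertheless $\gamma\in\overline{\J U}$, as the theorem asserts. What saves the theorem is that closures in $\J X=\varprojlim_n J_nX$ are detected at finite level: every basic open subset of $\J X$ has the form $\pi_{\infty,n}^{-1}(W)$ with $W\subseteq J_nX$ open, whence $\overline S=\bigcap_n\pi_{\infty,n}^{-1}(\overline{\pi_{\infty,n}(S)})$ for any $S\subseteq\J X$; so it suffices to approximate each truncation $\pi_{\infty,n}(\gamma)$ by truncations of arcs off $F$, with approximating families allowed to depend on $n$. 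For the umbrella, $(x,y,z)=(s^N\sqrt{t+s^2},\,s^N,\,t+s^2)$ with $s\in\mathbb{G}_m$ and $N\ge 2n$ is such a family: it lies in $\J U$ (note $\sqrt{t+s^2}\in\C[[t]]$ for each fixed $s\neq0$), and its $n$-truncations tend to those of $\gamma$ as $s\to0$, while no single $N$ works to all orders---which is exactly the gap between the statement that suffices and the wedge you ask for. Completing your proof means supplying this finite-level density argument (combined with your dimension induction, e.g.\ reducing to the generic points of the components of $\J F$), as the geometric proofs via resolution do, or else switching to Kolchin's original differential-algebraic argument \cite{Kol73}.
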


More precisely, we have for any $n \in\Z_{\ge 0}$, 
\begin{align} \label{eq:arc-irred}
J_n X = \pi_{n,0}^{-1}(X_{sing}) \cup \overline{\pi_{n,0}^{-1}(X_{reg})},   
\end{align}
and $\overline{\pi_{n,0}^{-1}(X_{reg})}$ is an irreducible component 
of $J_n X$. Here, $X_{sing}$ denotes the singular locus of $X$, 
and $X_{reg}$ its open complement in $X$. 
Kolchin's theorem says that 
$$\J X = \overline{\pi_{\infty,0}^{-1}(X_{reg})}.$$

%Theorem \ref{Th:Kolchin} is false for the jet schemes $J_m X$: 
%see for instance \cite{Moreau-Yu} for counter-examples 
%in the setting of nilpotent orbit closures. 
%We refer to {\em loc.~cit.}, and reference therein, 
%for more about existing relations between 
%the geometry of the jet schemes $J_m X$, $m\in \Z_{\ge 0}$,  
%and the singularities of~$X$. 

Let $n \in \Z_{\ge 0} \cup \{\infty\}$. 
The natural projection $\pi ^{X}_{n,0}\colon  J_n X\ra X$ 
 corresponds to the embedding $R \hookrightarrow  J_n R$, 
 $x^{i}\ra x_{(-1)}^{i}$ 
 in the case where $X=\Spec R$ is affine.
%In terms of arcs, $\pi ^{X}_{\infty,0}(\alpha)=\alpha(0)$ for 
% $\alpha\in \Hom_{Sch}(D,X)$,  
% where $0$ is the unique closed point of the formal disc $D$. 
{If $\m$ is a maximal ideal of $J_n R$, note that 
 $\pi_{n,0}^X(\m)=\m \cap R$.} 

For $I$ an ideal of $R$, we denote 
by $J_n I$ the smallest $T$-stable 
ideal of $J_n R$ containing $I$, that is, 
$J_n I$ is generated by the elements 
$T^j a$, $j=0,\ldots,n$, $a \in I$. 
{Recall that $\iota_n^X$ 
denotes the embedding $X \hookrightarrow J_n X$, 
and observe that $\iota_n^X (\m)= J_n (\m)$, for $\m$ a maximal ideal 
of $R$.}

\section{Vertex Poisson algebras 
and chiral Poisson ideals} \label{sec:PVA}
%
%Let $V$ be a commutative vertex algebra 
%(cf.~\S\ref{subs:commutative-vertex-algebras}),
%or equivalently, a differential algebra. 
%Recall that this means: $a_{(n)}=0$ in ${\rm End}(V)$ 
%for all $n\ge 0$. 
\begin{Def} \label{def:PVA}
A commutative vertex algebra $V$ 
is called a {\em vertex Poisson algebra} 
if it is also equipped with a linear operation, 
$$V \to \Hom (V,z^{-1}V[z^{-1}]), \quad 
a \mapsto a_-(z),$$
such that 
\begin{align} \label{eq:PVA-1}
& (T a)_{(n)} = -n a_{(n-1)}, & \\\label{eq:PVA-2}
& a_{(n)}b = \sum_{j \ge 0} 
(-1)^{n+j+1} \dfrac{1}{j!} T^{j} (b_{(n+j)} a) , & \\ \label{eq:PVA-3}
& [a_{(m)},b_{(n)}] =\sum_{j\ge 0} 
\begin{pmatrix} m \\ 
j \end{pmatrix} (a_{(j)} b)_{(m+n- j)}, & \\ \label{eq:PVA-4}
&a_{(n)}(b \cdot c) = (a_{(n)} b) \cdot c + b \cdot (a_{(n)}c)& 
\end{align}
for $a,b,c \in V$ and $n,m \ge 0$. 
Here, by abuse of notations, we have set 
$$a_-(z)=\sum_{n\ge 0} a_{(n)} z^{-n-1}$$ 
so that the $a_{(n)}$, $n \ge 0$, are ^^ ^^ new" operators, 
the ^^ ^^ old" ones given by the field $a(z)$ being
zero for $n \ge 0$ since $V$ is commutative. 
\end{Def}

%The commutative vertex algebra $V$ is 
%called a {\em vertex Poisson algebras}  if
%it is equipped with a
%bilinear maps
%\begin{align*}
% V\times V\ra V[\lam], \quad (a,b)\mapsto \{a_{\lam}b\}
% =\sum_{n\ge 0}\frac{\lam^n}{n!} a_{(n)}b,\quad
% a_{(n)}\in \End V,
%\end{align*}
%also called the {\em $\lam$-bracket},
%satisfying the following axioms:
%\begin{align}
% & \{(Ta) _{\lam}b\}=-\lam\{a_{\lam}b\},\quad \{a_{\lam}(T b)\}=(\lam+T)\{a_{\lam}b\},
% \label{eq:sesquilinearity-P} \\
% &\{b_{\lam}a\}=-\{a_{-\lam-T}b\},\label{eq:skew-P}\\
% &\{a_{\lam}\{b_{\mu}c\}\}-\{b_{\mu}\{a_{\lam}c\}\}=\{\{a_{\lam}b\}_{\lam+\mu}c\}, \label{eq:Jacobi-P}\\
% & \{a_{\lam}(bc)\}=\{a_{\lam}b\}c+\{a_{\lam}c\}b,\quad \{(ab)_{\lam}c\}=\{a_{\lam+T}c\}_{\rightarrow }b
% +\{b_{\lam+T}c\}_{\rightarrow}a, \label{eq:lipniz-P}
%\end{align}
%where
%the arrow means that
%$\lam+T $ should be moved to the right,
%that is,
%$\{a_{\lam+T}c\}_{\rightarrow }b=\sum_{n\ge 0}(a_{(n)}c)\frac{(\lam+T)^n}{n!}b$.
%
%Here, by abuse of notations, we have set 
%$$a_-(z)=\sum_{n\ge 0} a_{(n)} z^{-n-1}$$ 
%so that the $a_{(n)}$, $n \ge 0$, are ^^ ^^ new" operators, 
%the ^^ ^^ old" ones given by the field $a(z)$ being 
%zero for $n \ge 0$ since $V$ is commutative.

The equation \eqref{eq:PVA-4}
says that $a_{(n)}$, $n\ge 0$, is a derivation of the ring $V$.
%(Do not confuse $a_{(n)}\in \on{Der}(V)$, $n\ge 0$, with the multiplication $a_{(n)}$ as a vertex algebra, 
%which should be zero for a commutative vertex algebra.)
Note that 
%\eqref{eq:PVA-1},
\eqref{eq:PVA-2},
\eqref{eq:PVA-3} and
\eqref{eq:PVA-4}
are equivalent to the
^^ ^^ skewsymmetry",
the
^^ ^^ Jacobi identity"
and the ^^ ^^ left Leibniz rule"
in \cite[\S5.1]{Kaclecture17}.

It follows from the definition, that we also have  
the ^^ ^^ right Leibniz rule'' (\cite[Exercise 4.2]{Kaclecture17}): 
\begin{align} \label{eq:PVA-5}
(a \cdot b)_{(n)}c=\sum_{i \ge 0} (b_{(-i-1)}a_{(n+i)}c+a_{(-i-1)}b_{(n+i)}c), 
\end{align}
for all $a,b,c \in V$, $n \in \Z_{\ge 0}$. 
%%\commentA{is this useful? This is no ambiguity in the formula to me?}
%where $a_{(-n-1)}$ is considered as an element of $V$ for $n \in \Z_{\ge 0}$,
%i.e., $$a_{(-n-1)} =\dfrac{1}{n!} T^n a.$$   

%The first equation in \eqref{eq:lipniz-P}
%says that $a_{(n)}$, $n\ge 0$, is a derivation of the ring $V$.
%(Do not confuse $a_{(n)}\in \on{Der}(V)$, $n\ge 0$, with the multiplication $a_{(n)}$ as a vertex algebra, 
%which should be zero for a commutative vertex algebra.)

%Note that \eqref{eq:sesquilinearity-P}, \eqref{eq:skew-P},  \eqref{eq:Jacobi-P}
%are the same as  \eqref{eq:sesquilinearity}, \eqref{eq:skew},  \eqref{eq:Jacobi},
%and 
%\eqref{eq:lipniz-P} is the same as 
%\eqref{eq:non-com-wick1} and \eqref{eq:non-com-wick2} without the third terms.
%In particular, by \eqref{eq:Jacobi-P},
%we have
%\begin{align}
% [a_{(m)}, b_{(n)}]=\sum_{i\ge 0}\begin{pmatrix}
%				   \Delta_a \\i
% \end{pmatrix}(a_{(i)}b)_{(m+n-i)},\quad m,n\in \Z_{\ge 0}, \quad a \in V_{\Delta_a}.
% \label{eq:commutator-formula-Poisson}
%\end{align}

Arc spaces over an affine Poisson scheme 
naturally give rise to a vertex Poisson algebras, 
as shows the following result. 

\begin{Th}[{\cite[Proposition 2.3.1]{Ara12}}] \label{Th:Poisson-structure-arc-spaces}
Let $X$ be an affine Poisson scheme, that is,  $X=\Spec R$ for some Poisson
 algebra $R$. Then there is a unique vertex Poisson algebra structure on
 $ J_\infty R=\C[ \J X]$ such that 
%\begin{align*}
%a_{(n)}b=  \delta_{n,0} \{a, b\} \quad\text{ for }\quad a, b \in R  \subset  J_\infty R,
%\end{align*}
%where $\{~, ~\}$ is the Poisson bracket on $R$. 
%In other words, 
$$a_{(n)}b=\begin{cases} \{a,b\} & \text{ if }n=0\\
0 & \text{ if }n >0,
\end{cases}$$
for $a,b \in R$.
\end{Th}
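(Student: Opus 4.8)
\emph{Reformulation and uniqueness.} Specifying the operations $a_{(n)}$, $n\ge 0$, on a commutative vertex algebra is the same as specifying the $\lambda$-bracket $[a_\lambda b]:=\sum_{n\ge 0}\tfrac{\lambda^{n}}{n!}\,a_{(n)}b$, and by the discussion following Definition~\ref{def:PVA} the axioms \eqref{eq:PVA-1}--\eqref{eq:PVA-4} translate into sesquilinearity, skew-symmetry, the Jacobi identity and the left Leibniz rule for this bracket, while \eqref{eq:PVA-5} is the right Leibniz rule; moreover second-slot sesquilinearity $[a_\lambda Tb]=(\lambda+T)[a_\lambda b]$ follows from \eqref{eq:PVA-1} and \eqref{eq:PVA-2}. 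In this language the requirement is exactly $[a_\lambda b]=\{a,b\}$, constant in $\lambda$, for $a,b\in R$. Since $J_\infty R$ is generated as a differential algebra by $R$, that is, as a commutative algebra by the elements $T^{k}a$ with $a\in R$ and $k\ge 0$, sesquilinearity together with the two Leibniz rules determines $[u_\lambda v]$ for all $u,v\in J_\infty R$ from its values on $R\times R$. This gives uniqueness.

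\emph{Existence, free case.} I first treat $R=\C[x^{1},\dots,x^{N}]$, so that $J_\infty R=\C[x^{i}_{(-k-1)}]$ is the algebra of differential polynomials. Setting $[x^{i}_\lambda x^{j}]:=\{x^{i},x^{j}\}\in R$, the master formula of Poisson vertex algebra theory (\cite{Kaclecture17}) produces a unique bilinear $\lambda$-bracket on $J_\infty R$ that is sesquilinear, satisfies both Leibniz rules and extends this assignment; in particular it restricts to $\{\cdot,\cdot\}$ on $R$. What remains is to check skew-symmetry and the Jacobi identity. For fixed arguments each side of these identities is, slot by slot, a derivation compatible with $T$; hence by sesquilinearity and the Leibniz rules it is enough to verify them when all arguments lie in the generating set $R$. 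There skew-symmetry reduces to $\{x^{i},x^{j}\}=-\{x^{j},x^{i}\}$ and the Jacobi identity reduces to $\{x^{i},\{x^{j},x^{k}\}\}=\{\{x^{i},x^{j}\},x^{k}\}+\{x^{j},\{x^{i},x^{k}\}\}$, both of which hold because $(R,\{\cdot,\cdot\})$ is a Poisson algebra. This proves the claim for $R$ polynomial.

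\emph{Descent to a quotient.} In general $R=\C[x^{i}]/I$ and, by \eqref{eq:arc-ring}, $J_\infty R=J_\infty\C[x^{i}]/J_\infty I$, where $J_\infty I$ is the smallest $T$-stable ideal containing $I$. Because $R$ is Poisson, $I$ is a Poisson ideal of $\C[x^{i}]$, i.e.\ $\{I,\C[x^{i}]\}\subseteq I$. The decisive point is that $J_\infty I$ is a \emph{chiral Poisson ideal} of the vertex Poisson algebra $J_\infty\C[x^{i}]$ built in the previous step, that is, $u_{(n)}(J_\infty I)\subseteq J_\infty I$ for all $u$ and all $n\ge 0$. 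Granting this, the $\lambda$-bracket descends to the quotient $J_\infty R$, remains sesquilinear, satisfies the Leibniz rules and the already verified skew-symmetry and Jacobi identity, and still restricts to $\{\cdot,\cdot\}$ on $R$; this produces the desired structure and completes existence.

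\emph{The main obstacle.} The heart of the argument is the reduction-to-generators principle used twice above: first to cut the verification of skew-symmetry and Jacobi down to triples in $R$, and then to prove that $J_\infty I$ is a chiral Poisson ideal. For the latter, $J_\infty I$ is generated as an ideal by the elements $T^{k}f$ with $f\in I$; using that each $u_{(n)}$ is a derivation, that $J_\infty I$ is $T$-stable, and the right Leibniz rule \eqref{eq:PVA-5}, one reduces the membership $u_{(n)}(T^{k}f)\in J_\infty I$ successively to $a_{(n)}f$ with $a=x^{i}$, which equals $\{x^{i},f\}\in I$ for $n=0$ and vanishes for $n>0$. Setting up this inductive bookkeeping carefully---so that a biderivation-type identity valid on a generating set genuinely propagates across the whole differential algebra---is the only delicate part; once it is in place, every verification bottoms out in the Poisson axioms of $(R,\{\cdot,\cdot\})$.
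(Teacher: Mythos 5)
The paper itself does not prove this theorem: it quotes it from \cite{Ara12}, so your proposal has to stand on its own. Your uniqueness argument is correct (generation of $J_\infty R$ by $R$ as a differential algebra, plus sesquilinearity in both slots and the two Leibniz rules \eqref{eq:PVA-4}, \eqref{eq:PVA-5}, pins the $\lambda$-bracket down), and your free case is also fine: for a \emph{polynomial} Poisson algebra $R=\C[x^1,\dots,x^N]$ the master formula of \cite{Kaclecture17} produces the bracket, and skew-symmetry and Jacobi reduce to triples of generators, where they follow from the Poisson axioms.

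The gap is in the descent step, concentrated in the sentence ``Because $R$ is Poisson, $I$ is a Poisson ideal of $\C[x^{i}]$.'' This presupposes that the Poisson bracket of $R=\C[x^{i}]/I$ lifts to a Poisson bracket on the polynomial ring $\C[x^{i}]$ making $I$ a Poisson ideal. No such lift is constructed, and in general none exists: the brackets $\{x^{i},x^{j}\}$ are elements of $R$, and while one can always choose antisymmetric lifts $P_{ij}\in\C[x^{i}]$ of them, the resulting biderivation on $\C[x^{i}]$ satisfies the Jacobi identity only \emph{modulo} $I$, so $\C[x^{i}]$ equipped with it is not a Poisson algebra. Consequently your free case does not apply to the ambient polynomial ring: the master-formula bracket on $J_\infty\C[x^{i}]$ built from the $P_{ij}$ is sesquilinear, Leibniz and skew-symmetric, but its Jacobi identity is unproven, and the descent argument --- which invokes ``the already verified skew-symmetry and Jacobi identity'' upstairs --- breaks exactly there. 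The argument can be repaired, but it needs a genuinely new step: for instance, show that the Jacobiator of the bracket attached to the $P_{ij}$ takes values in $J_\infty I[\lambda,\mu]$, by rerunning the reduction-to-generators argument \emph{relative to the ideal} $J_\infty I$ (your computation that $J_\infty I$ is stable under all $u_{(n)}$ does survive, since it only uses $\{x^{i},f\}'\in I$ for $f\in I$), so that Jacobi holds after passing to $J_\infty R=J_\infty\C[x^{i}]/J_\infty I$; alternatively, present $R$ as a quotient of a free \emph{Poisson} algebra (a polynomial ring on infinitely many variables) by an honest Poisson ideal. Either way, the step you dismissed as inductive bookkeeping is where the real content of the existence proof lies, and as written the proof is incomplete.
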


Let $V$ be a vertex Poisson algebra, and $I$ an ideal of 
$V$ in the associative sense. 
\begin{Def}
We say that $I$ is a {\em chiral Poisson ideal} of $V$ if 
$a_{(n)}I \subset I$ for all $a \in V$, $n\in\Z_{\ge 0}$. 
\end{Def}
A {\em vertex Poisson ideal} of $V$ is 
a chiral Poisson ideal that is stable under the action of $T$.
The quotient space  $V/I$ inherits a vertex Poisson algebra structure from 
$V$ if $I$ is a vertex Poisson ideal. 

\begin{Lem}[{\cite[3.3.2]{Dix96}}]
\label{Lem:Poisson-ideal}
If $I$ is a vertex (resp.~chiral) Poisson ideal of $V$, then so is 
its radical $\sqrt{I}$. 
\end{Lem}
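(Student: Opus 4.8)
The plan is to reduce the statement to the classical fact that the radical of a differential ideal is again differential, applied separately to each of the derivations $a_{(n)}$, $n \ge 0$, and to $T$. First I would note that $\sqrt{I}$ is automatically an ideal of $V$ in the associative sense (standard commutative algebra), so only the derivation-invariance conditions remain to be checked. By \eqref{eq:PVA-4}, for every $a \in V$ and every $n \ge 0$ the operator $a_{(n)}$ is a derivation of the commutative ring $V$; and by the discussion of commutative vertex algebras in Section~\ref{sec:vertex-algebras}, the translation operator $T$ is also a derivation. The hypothesis that $I$ is a chiral Poisson ideal says precisely that $D(I) \subseteq I$ for each derivation $D = a_{(n)}$ with $a \in V$, $n \ge 0$; in the vertex Poisson case one has in addition $T(I) \subseteq I$.

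Thus the entire claim follows once I establish the single-derivation lemma: if $D$ is a derivation of a commutative $\C$-algebra $V$ and $D(I) \subseteq I$, then $D(\sqrt{I}) \subseteq \sqrt{I}$. Applying this to every $D = a_{(n)}$ shows that $\sqrt{I}$ is a chiral Poisson ideal, and applying it also to $D = T$ gives the additional $T$-stability needed in the vertex Poisson case. This is exactly the classical argument for ordinary Poisson algebras (cf.\ \cite[3.3.2]{Dix96}), now run for each of the infinitely many derivations $a_{(n)}$.

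For the lemma, let $b \in \sqrt{I}$ and choose $m \ge 1$ with $b^{m} \in I$; the goal is $(Db)^{2m-1} \in I$, whence $Db \in \sqrt{I}$. I would prove by induction on $k$ the relation
\begin{align*}
b^{\tss m-k}(Db)^{2k-1} \in I, \qquad k = 1, \dots, m.
\end{align*}
The base case $k=1$ follows by differentiating $b^{m} \in I$, which gives $m\, b^{\tss m-1}(Db) \in I$, hence $b^{\tss m-1}(Db) \in I$ since we work in characteristic zero. For the inductive step I would differentiate $b^{\tss m-k}(Db)^{2k-1} \in I$ and then multiply the resulting element of $I$ by $Db$; the second-order cross-term then takes the form $(2k-1)(D^{2}b)\cdot b^{\tss m-k}(Db)^{2k-1}$, which lies in $I$ by the inductive hypothesis, leaving $(m-k)\,b^{\tss m-k-1}(Db)^{2k+1} \in I$. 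Dividing by the nonzero integer $m-k$ yields the case $k+1$, and at $k=m$ the relation reads $(Db)^{2m-1} \in I$.

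The only point requiring care — and the reason a naive termwise differentiation does not suffice — is the appearance of the second-derivative term $D^{2}b$, which cannot be absorbed into $I$ directly. The device of multiplying the differentiated relation by $Db$ is precisely what turns this cross-term into a multiple of the element furnished by the inductive hypothesis. The argument also relies on $\C$ having characteristic zero, so that the integer coefficients $m$ and $m-k$ are invertible; this holds automatically since all algebras here are over $\C$. I expect this multiplication-by-$Db$ bookkeeping to be the main (and essentially the only) technical step, everything else being a formal reduction to the single-derivation case.
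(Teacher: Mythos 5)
Your proposal is correct and matches the paper's intent exactly: the paper gives no argument of its own but simply cites Dixmier \cite[3.3.2]{Dix96}, which is precisely the single-derivation radical-stability fact you reduce to and then prove by the standard induction on $b^{m-k}(Db)^{2k-1}\in I$. Both the reduction (applying the fact to each derivation $a_{(n)}$, and additionally to $T$ in the vertex Poisson case) and the multiplication-by-$Db$ induction are sound, so nothing is missing.
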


 \begin{Def}Let $V$ be a vertex Poisson algebra. 
 We denote by $\Zent(V)$ the {\em vertex Poisson center} of $V$: 
 \begin{align*}
 \Zent(V) :=\{z \in V \mid z_{(n)}a=0, \, \forall\,a \in V,\, n \ge 0\} 
  \end{align*}
 By \eqref{eq:PVA-2}, we also have 
 $ \Zent(V) = \{z \in V \mid a_{(n)}z=0, \, \forall\,a \in V,\, n \ge 0\}$. 
 \end{Def}
 The  vertex Poisson center $\Zent(V)$ is a 
 vertex Poisson ideal of $V$. 
 Indeed, it is clearly invariant by the derivations $a_{(n)}$, $a\in V$,  
 $n\in\Z_{\ge 0}$. Moreover, it is invariant by $T$ by the axiom \eqref{eq:PVA-1}.

{We say that a scheme $X$ is a {\em vertex Poisson scheme} 
if its structure sheaf $\mc{O}_X$ is a sheaf of 
%the spectrum $\Spec V$  of any 
vertex Poisson algebras. 
If $X=\Spec V$ is an affine {vertex Poisson scheme}
and $I$ is a chiral Poisson ideal of $V$, we call 
the spectrum 
$\Spec (V/I)$
a {\em chiral Poisson subscheme} of $X$. 
A {\em chiral Poisson scheme} is a chiral Poisson subscheme 
of some vertex Poisson scheme. 
%for %$V$ is a vertex Poisson algebra and

By Lemma \ref{Lem:Poisson-ideal},
the reduced scheme of a vertex Poisson scheme (resp.~chiral Poisson scheme) 
is also a vertex Poisson scheme (resp.~chiral Poisson scheme). 
In this case, we rather call it a {\em vertex Poisson variety} or a 
{\em chiral Poisson variety}.

%In the case where $V$ is $J_{\infty} R$, with $R=\C[X]$ 
%the coordinate ring of some affine Poisson scheme $X$, 
%we have the following result. 

\begin{Lem} \label{Lem:chiral Poisson-def} 
Let $I$ be an ideal of 
$J_{\infty} R$ in the associative sense. Then $I$ is a {chiral Poisson ideal} of $J_{\infty} R$ if 
and only if $a_{(n)}I \subset I$ for all $a \in R$, $n\in\Z_{\ge 0}$. 
\end{Lem}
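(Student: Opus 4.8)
The plan is to prove the nontrivial (``if'') implication, the converse being immediate since $R \subset J_\infty R$. I would set
\[
S := \{a \in J_\infty R \mid a_{(n)}I \subset I \text{ for all } n \in \Z_{\ge 0}\},
\]
so that the conclusion ``$I$ is a chiral Poisson ideal'' is precisely the equality $S = J_\infty R$. By hypothesis $R \subset S$, and my strategy is to show that $S$ is a \emph{differential subalgebra} of $J_\infty R$. Because $J_\infty R$ is generated by the image of $R$ as a differential algebra --- indeed, in the explicit coordinates one has $x^{i}_{(-j-1)} = \frac{1}{j!}\,T^{j} x^{i}_{(-1)}$, so every generator lies in the differential subalgebra generated by $R$ --- this at once forces $S = J_\infty R$.

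First I would record the routine structural facts. The set $S$ is a $\C$-subspace, since $a \mapsto a_{(n)}$ is linear in $a$, and it contains the vacuum $|0\ket$ because $|0\ket_{(n)} = 0$ for all $n \ge 0$. Next I would check that $S$ is stable under $T$: if $a \in S$, then \eqref{eq:PVA-1} gives $(Ta)_{(n)} = -n\,a_{(n-1)}$, which vanishes for $n = 0$ and equals a multiple of $a_{(n-1)}$ (with $n-1 \ge 0$) for $n \ge 1$; in either case $(Ta)_{(n)}I \subset I$, so $Ta \in S$.

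The crux is closure under the commutative product, and this is the only step requiring a genuine idea. Given $a,b \in S$ and $c \in I$, I would apply the right Leibniz rule \eqref{eq:PVA-5},
\[
(a \cdot b)_{(n)}c = \sum_{i \ge 0}\bigl(b_{(-i-1)}a_{(n+i)}c + a_{(-i-1)}b_{(n+i)}c\bigr),
\]
a finite sum for fixed $c$. The key observation is that the \emph{negative} modes act by multiplication: in the commutative vertex algebra $J_\infty R$ one has $b_{(-i-1)}d = \frac{1}{i!}(T^{i}b)\cdot d$, so $b_{(-i-1)}$ maps the associative ideal $I$ into itself. Since $a \in S$ yields $a_{(n+i)}c \in I$, each term $b_{(-i-1)}(a_{(n+i)}c)$ stays in $I$, and symmetrically for the other summand; hence $(a\cdot b)_{(n)}c \in I$ and $a\cdot b \in S$.

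I expect this last step to be the only delicate point: the argument hinges on recognizing that the modes $a_{(-i-1)}, b_{(-i-1)}$ occurring in \eqref{eq:PVA-5} are exactly rescaled multiplications by $T^{i}a, T^{i}b$, so that the purely associative-ideal property of $I$ is what closes the computation. Everything else is bookkeeping. Once $S$ is seen to be a $T$-stable subalgebra containing $R$, the generation statement for $J_\infty R$ finishes the proof.
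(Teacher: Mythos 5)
Your proposal is correct and follows essentially the same route as the paper's proof: the paper likewise handles the translates $T^j a$ via \eqref{eq:PVA-1}, handles products via the right Leibniz rule \eqref{eq:PVA-5} together with the fact that the negative modes act as multiplication by $\tfrac{1}{i!}T^i a$ (so the associative ideal property closes the computation), and finishes by the generation of $J_\infty R$ by the elements $T^j v$, $v \in R$. Your packaging of the argument as ``the set $S$ is a $T$-stable subalgebra containing $R$'' is only a cosmetic reorganization of the same ideas.
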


\begin{proof}
The ^^ ^^ only if'' part is obvious. 

Assume that $a_{(n)} I  \subset I$ for all $a \in R$, $n\in\Z_{\ge 0}$.
 We wish to
show that $a_{(n)} I  \subset I$ for all $a \in J_\infty R$, $n\in\Z_{\ge 0}$.
Let $u \in I$. 
First, by \eqref{eq:PVA-1},
$$(T^j a)_{(n)} u = 
\begin{cases} 
(-1)^n \dfrac{n!}{(n-j) !} a_{(n-j)} u & \text{if } 0 \le j \le n,\\
0 & \text{if }j > n,
\end{cases}$$   
for all $a \in R$, $n,j \in\Z_{\ge 0}$. 
Hence $(T^j a)_{(n)} u \in I$ for all $a \in R$, $n,j \in\Z_{\ge 0}$ 
by our assumption. 
Next, by \eqref{eq:PVA-5} and the above,  $(a\cdot b)_{(n)} u$ 
is in $I$ for all $a,b$ of the form $T^j v$, $v \in R$. 
Since $\J R$ is generated as a commutative algebra 
by the elements $T^j v$, $v \in R$, we get the expected statement. 
\end{proof}

\section{Rank stratification} \label{sec:Rank}
Let $X=\Spec R$ be a reduced Poisson scheme, 
%We call a subscheme $Y$ of $J_{\infty}X$  chiral Poisson
%if the defining ideal of $Y$ is chiral Poisson. 
and $\{x^1,\dots, x^r\}$  a generating set for $R$. 
Let $n \in \Z_{\ge 0}$. 
Then $\{T^j x^{i}\mid i=1,\dots, r, j= 0,\ldots,n\}$ is a generating set for $J_n R=\C[J_n X]$.
We have by the equality (10) of \cite{Ara12} that 
for any $x,y \in R$, 
 \begin{align} \label{eq:10-Ara12}
 x_{(k)}(T^{l} y) 
 =\begin{cases} \frac{l!}{(l-k)!} T^{l-k}\{x,y\} & \text{if } l \ge k, \\
 0 & \text{otherwise}.
 \end{cases}
 \end{align}
Hence, for $x \in R$, the derivations $x_{(k)}$ of $J_{\infty} R$  
acts on $J_n R$ if $k\in \{0,\ldots,n\}$ 
by the description \eqref{eq:arc-jets} of $J_n R$.

Consider the 
$(n+1)r$-size square matrix
\begin{align*}
\mathscr{M}_n=\left(x^i_{(p)}(T^q x^{j})\right)_{1\le i,j\le r,\ 0\le p,q\le n}\in \on{Mat}_{(n+1)r}(J_n R).
\end{align*}
For $x \in J_nX$,
set $$\mathscr{M}_n(x)=\left(x^i_{(p)}(T^q x^{j})+\mf{m}_x\right)_{1\le i,j\le r,\ 0\le p,q\le n}\in \on{Mat}_{(n+1)r}(\C),$$
where $\m_x$ is the maximal ideal of {$J_n R$} corresponding to $x$. 
\begin{Lem}\label{Lem:crucial}
Let $x \in J_n X$. 
We have
$$\on{rank}\mathscr{M}_n(x)=(n+1)\on{rank}\mathscr{M}_0(\pi_{n,0}^X(x)).$$
In particular,
$\on{rank}\mathscr{M}_n(x)$ is independent of the choice of generators $\{x^1,\dots, x^r\}$
and depends only on $\pi_{n,0}^X(x)\in X$.
\end{Lem}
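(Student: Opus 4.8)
The plan is to understand the matrix $\mathscr{M}_n(x)$ concretely using formula \eqref{eq:10-Ara12}, and to show it is built from the single block $\mathscr{M}_0(\pi_{n,0}^X(x))$ in a way that makes the rank multiply by $(n+1)$. First I would compute the individual entries. Applying \eqref{eq:10-Ara12} with $x=x^i$, $y=x^j$, $k=p$, $l=q$ gives
\begin{align*}
x^i_{(p)}(T^q x^j)=\begin{cases}\frac{q!}{(q-p)!}\,T^{q-p}\{x^i,x^j\}&\text{if }q\ge p,\\ 0&\text{otherwise}.\end{cases}
\end{align*}
Thus the $(i,p),(j,q)$ entry vanishes unless $q\ge p$, so after grouping rows by the index $p$ and columns by the index $q$ (each group being an $r\times r$ block indexed by $i,j$), the matrix $\mathscr{M}_n$ is block upper-triangular in the $(p,q)$ grading. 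Evaluating at $x$ and abbreviating $\xi=\pi_{n,0}^X(x)$, the diagonal block $p=q$ has entries $q!/0!\cdot T^0\{x^i,x^j\}=\{x^i,x^j\}$ evaluated at $\xi$; but one must be careful that $T^{q-p}\{x^i,x^j\}$ is a function on $J_nX$ whose value at $x$ depends on the higher jet coordinates, not merely on $\xi$. On the diagonal, however, $T^{q-p}=T^0=\id$, so each diagonal block is exactly $\left(\{x^i,x^j\}(\xi)\right)_{i,j}=\mathscr{M}_0(\xi)$.

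The key step is then the elementary linear-algebra fact that a block upper-triangular matrix over a field has rank equal to the sum of the ranks of its diagonal blocks \emph{provided} the off-diagonal blocks can be cleared by row/column operations that preserve the diagonal blocks. Since all $n+1$ diagonal blocks are the same matrix $\mathscr{M}_0(\xi)$, this would immediately give $\on{rank}\mathscr{M}_n(x)=(n+1)\on{rank}\mathscr{M}_0(\xi)$, which is exactly the claim. The cleanest way to run this is to observe the stronger structure: the off-diagonal entry in block position $(p,q)$ with $q>p$ is $\frac{q!}{(q-p)!}T^{q-p}\{x^i,x^j\}(\xi)$, and since $T$ raises the jet level, these are obtained by differentiating the \emph{same} bracket functions $\{x^i,x^j\}$. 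I would exploit this by noting that the whole matrix factors through the Poisson tensor of $X$ at $\xi$: writing $P=(\{x^i,x^j\})_{i,j}\in\on{Mat}_r(R)$, every block is a scalar multiple of a $T$-derivative of $P$ evaluated at $\xi$, so the column span of $\mathscr{M}_n(x)$ is spanned by $(n+1)$ translated copies of the column span of $P(\xi)=\mathscr{M}_0(\xi)$.

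To make the rank count rigorous I would use the triangular structure directly: perform column operations to eliminate, within each fixed row-block $p$, all columns in blocks $q>p$ against the diagonal block $q=p$, which is legitimate because every off-diagonal block in row-block $p$ is a left-multiple-independent but $\mathscr{M}_0(\xi)$-compatible combination. More carefully, since the matrix is block upper triangular with respect to the flag given by the grading $0<1<\cdots<n$, its rank is at least the sum $(n+1)\on{rank}\mathscr{M}_0(\xi)$ (from the diagonal blocks) and at most the same by counting the nonzero block-rows; the matching of these two bounds is what forces equality. I expect the \textbf{main obstacle} to be verifying that no ``extra'' rank is created by the off-diagonal blocks, i.e.\ that the upper-triangular clutter genuinely does not increase the rank beyond the diagonal contribution; this requires checking that for each row-block the off-diagonal entries lie in the row space of that block's diagonal $\mathscr{M}_0(\xi)$, which follows because $T^{q-p}\{x^i,x^j\}(\xi)$, for fixed $i$, is a $\C$-linear combination (in the $j$ index) determined by the Poisson structure and hence cannot span directions outside those already present in $\mathscr{M}_0(\xi)$. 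The final sentence of the lemma—independence of the generating set and dependence only on $\pi_{n,0}^X(x)$—then follows automatically, since $\on{rank}\mathscr{M}_0(\xi)$ is the rank of the Poisson bivector at $\xi\in X$, an intrinsic invariant.
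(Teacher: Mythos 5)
Your overall strategy is the same as the paper's: apply \eqref{eq:10-Ara12} entrywise, conclude that $\mathscr{M}_n$ is block upper triangular for the $(p,q)$-grading with diagonal blocks $q!\,\mathscr{M}_0$ (you simplified $q!/0!$ to $1$; this is a slip, but harmless for rank), and then read off the rank. The inequality $\on{rank}\mathscr{M}_n(x)\ge (n+1)\on{rank}\mathscr{M}_0(\pi^X_{n,0}(x))$ does follow from this shape. The gap lies entirely in the reverse inequality, and you identified exactly the right danger point but then argued it away incorrectly. Your claim that the off-diagonal entries $\frac{q!}{(q-p)!}T^{q-p}\{x^i,x^j\}$, evaluated at $x$, ``lie in the row space of the diagonal block'' and ``cannot span directions outside those already present in $\mathscr{M}_0(\xi)$'' is not proved, and it is false: as you yourself observed earlier in your write-up, these entries depend on the higher jet coordinates of $x$, not only on $\xi=\pi^X_{n,0}(x)$, and nothing ties them to $\mathscr{M}_0(\xi)$. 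Concretely, take $R=\C[x^1,x^2]$ with Poisson bracket $\{x^1,x^2\}=x^1$, take $n=1$, and let $x\in J_1X$ be the $1$-jet $t\mapsto(t,0)$, so that $\xi$ is the origin, $x^1(x)=0$ and $(Tx^1)(x)=1$. Then $\mathscr{M}_0(\xi)=0$, whereas, ordering rows and columns as in the paper,
\begin{align*}
\mathscr{M}_1(x)=\begin{pmatrix}
0 & 0 & 0 & 1\\
0 & 0 & -1 & 0\\
0 & 0 & 0 & 0\\
0 & 0 & 0 & 0
\end{pmatrix},
\end{align*}
which has rank $2$: the off-diagonal block $\bigl(T\{x^i,x^j\}\bigr)_{i,j}$ evaluated at $x$ is invertible while both diagonal blocks vanish. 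So the step you flagged as the ``main obstacle'' is not merely unproved; at a general point of $J_nX$ it is simply not true that the off-diagonal blocks create no extra rank, and no row/column reduction scheme can make it true.

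Two further remarks. First, you are in good company: the paper's own proof displays the same block triangular form and concludes ``whence the first statement,'' i.e.\ it too invokes the principle that a block upper triangular matrix has rank equal to the sum of the ranks of its diagonal blocks, which is false in general, and it offers no control of the off-diagonal blocks either; the example above shows this cannot be repaired at arbitrary $x\in J_nX$. Second, what the triangular form genuinely yields, besides the inequality $\ge$, is the asserted equality at \emph{constant} jets: if $x=\iota_n(\xi)$ with $\xi\in X$, then $(T^mf)(x)=0$ for every $f\in R$ and every $m\ge 1$, so all off-diagonal blocks vanish, $\mathscr{M}_n(x)$ is block diagonal with blocks $q!\,\mathscr{M}_0(\xi)$, and $\on{rank}\mathscr{M}_n(x)=(n+1)\on{rank}\mathscr{M}_0(\xi)$. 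That weaker statement is what is actually invoked at the key point later in the paper (in the proof of Proposition \ref{Pro:equality-cores}, the lemma is applied at $x_n=\iota_n(x)$); at non-constant jets only the inequality $\ge$ survives.
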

\begin{proof}
%According to Definition \ref{Def:derivation-arc}, we have 
%$$T^{k} x^{j}=T^{k} x^{j}_{(-1)}=k ! x^{j}_{(-k-1)}$$
%Hence, 
%By \eqref{eq:10-Ara12}, 
%\begin{align*}
%x^{i}_{(k)}( T^{k} x^{j} ) = k ! \{x^{i},x^{j}\} .
%\end{align*}
 By definition,  
$\mathscr{M}_n$  
is the following matrix: 
$$\begin{pmatrix}
x^{1}_{(0)}x^{1} & \cdots & x^{1}_{(0)}x^{r} & \cdots \cdots& x^{1}_{(0)}(T^{n}x^{1}) & \cdots 
& x^{1}_{(0)}(T^{n}x^{r}) \\
\vdots &&&&&& \vdots \\
x^{r}_{(0)}x^{1} & \cdots &  x^{r}_{(0)}x^{r} & \cdots\cdots & x^{r}_{(0)}(T^{n}x^{1}) & \cdots 
& x^{r}_{(0)}(T^{n}x^{r})\\
%x^{1}_{(1)}x^{1} & \cdots &  x^{1}_{(1)}x^{r} & \cdots \cdots& x^{1}_{(1)}(T^{m}x^{1}) & \cdots 
%& y^{1}_{(1)}(T^{m}x^{r})\\
%\vdots &&&&&& \vdots \\
%x^{r}_{(1)}x^{1} & \cdots &  x^{r}_{(1)}x^{r} & \cdots \cdots& x^{r}_{(1)}(T^{m}x^{1}) & \cdots 
%& x^{r}_{(1)}(T^{m}x^{r})\\
\vdots &&&&&& \vdots \\
\vdots &&&&&& \vdots \\
x^{1}_{(n)}x^{1} & \cdots & x^{1}_{(n)}x^{r} & \cdots \cdots& x^{1}_{(n)}(T^{n}x^{1}) & \cdots 
& x^{1}_{(n)}(T^{n}x^{r}) \\
\vdots &&&&&& \vdots \\
x^{r}_{(n)}x^{1} & \cdots &  x^{r}_{(n)}x^{r} & \cdots\cdots & x^{r}_{(n)}(T^{n}x^{1}) & \cdots 
& x^{r}_{(n)}(T^{n}x^{r})
\end{pmatrix}
$$
So by \eqref{eq:10-Ara12} it 
has the form 
$$\begin{pmatrix}
\mathscr{M}_0 &  \ast & \cdots &  \ast \\
0 & 1! \mathscr{M}_0 & \ddots& \vdots \\
 \vdots  & \ddots& \ddots &  \ast \\
0 & \cdots & 0 & n! \mathscr{M}_0
 \end{pmatrix},$$
 whence the first statement. 
% The lemma follows because 
% $\mathscr{M}_0(\pi_{n,0}^X(x))=\mathscr{M}_0(x)$. 
 Here we identify the elements 
$\{x^{i},x^{j} \}=x^{i}_{(0)}x^{j}$ of  
$R$ with elements of $J_n R$ 
through the embedding $R\hookrightarrow J_n R$.
 The independence of the choice of generators follows from \cite{Van01}. 
\end{proof}

For $x \in J_n X$, let 
${\rm rk}\, x$ to be $1/(n+1) \times {\rm rank}\,\mathscr{M}_n (x)$. 
By Lemma~\ref{Lem:crucial},
 ${\rm rk}\,x$ is a non-negative integer.
 Moreover, for $x \in J_{\infty}X$, 
$ {\rm rk}\, \pi_{\infty,n}^{X}(x)$ does not depend on 
$n$. So we can define ${\rm rk}\, x$ to be this number. 
Lemma \ref{Lem:crucial} says that ${\rm rk}\, x$ 
is nothing but the rank of the matrix $\mathscr{M}_0$ at $x_0:=\pi_{\infty,0}^{X}(x)$.

Let $\LL$ be a chiral Poisson subscheme of $J_{\infty}X$.
We  define the  {\em rank stratification} of $\LL$ as follows. 
For $j \in \Z_{\ge 0}$, set:
\begin{align*}
\LL^0_j := \{x \in \LL \mid {\rm rk}\, x= j \}\subset 
\LL_j := \{x \in \LL \mid {\rm rk}\,x \le j \},
\end{align*}
Also,
set 
$\bar{\LL}=\pi_{\infty,0}^X(\LL)\subset X$,
and put
\begin{align*}
\bar{\LL}^0_j := \{x \in \bar{\LL} \mid {\rm rank}\,\mathscr{M}_0 (x)= j \}\subset 
\bar{\LL}_j := \{x \in \bar{\LL} \mid {\rm rank}\,\mathscr{M}_0(x)\le j \}.
\end{align*}
For $\LL=J_{\infty} X$, we have $\bar{\LL}=X$ and 
$$X= \bigsqcup_j \bar{\LL}^0_j$$
is precisely the rank stratification of $X$ defined by Brown and Gordon \cite{Brown-Gordon}. 
Note that $\LL_j=(\pi_{\infty,0}^X)^{-1}(\bar \LL_j)$ 
and $\LL_j^0=(\pi_{\infty,0}^X)^{-1}(\bar \LL_j^0)$ by definition.
\begin{Lem} \label{Lem:rank-strata}
\begin{enumerate}
\item $\LL_j$ 
is a closed subset of $\LL$ 
with 
$\LL_0 \subseteq \LL_1 \subseteq \cdots 
\subseteq \LL_d =\LL$ 
for some $d\in \Z_{\ge 0}$.
\item $\LL_j$ is a chiral Poisson subscheme of $J_{\infty} X$. 
%\commentA{I removed the last statement because the proof was not correct.}
%\item 
%If $\LL^0_j \not= \varnothing$, 
%then $\dim \pi_{\infty,n}^{X} (Z) \ge (n+1)j$ for each irreducible component 
%$Z$ of $\LL^0_j$.
\end{enumerate}
\end{Lem}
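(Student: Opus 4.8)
The plan is to treat the two parts separately and to reduce the Poisson content of part (2) to a finite-dimensional fact about the base scheme $X$ that is due to Brown and Gordon.

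For part (1) I would start from the identity $\on{rk} x=\on{rank}\mathscr{M}_0(\pi_{\infty,0}^X(x))$ for $x\in\LL$, which follows from Lemma~\ref{Lem:crucial}. Thus the condition $\on{rk} x\le j$ says precisely that all $(j+1)\times(j+1)$ minors of the $r\times r$ matrix $\mathscr{M}_0=(\{x^i,x^j\})_{1\le i,j\le r}$ vanish at $\pi_{\infty,0}^X(x)$. Regarding these minors as elements of $R\subseteq J_\infty R$ through the embedding $x^i\mapsto x^i_{(-1)}$, I see that $\LL_j$ is cut out in $\LL$ by the vanishing of finitely many regular functions, hence is closed. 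The inclusions $\LL_0\subseteq\LL_1\subseteq\cdots$ are immediate from the definition, and since $\mathscr{M}_0\in\on{Mat}_r(R)$ its rank is bounded by $r$; taking $d$ to be the maximal value attained by $\on{rk}$ on $\LL$ (an integer $\le r$) gives $\LL_d=\LL$.

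For part (2), write $\LL=\Spec(J_\infty R/I_\LL)$ with $I_\LL$ a chiral Poisson ideal, and let $I_j$ be the ideal of $J_\infty R$ generated by $I_\LL$ together with all $(j+1)$-minors of $\mathscr{M}_0$. Part (1) shows $V(I_j)=\LL_j$ set-theoretically, so it is enough to prove that $I_j$ (or, if the reduced structure is desired, $\sqrt{I_j}$, which is again chiral Poisson by Lemma~\ref{Lem:Poisson-ideal}) is a chiral Poisson ideal. By Lemma~\ref{Lem:chiral Poisson-def} this amounts to checking $a_{(n)}I_j\subseteq I_j$ for all $a\in R$ and $n\ge 0$, and since $a_{(n)}$ is a derivation it suffices to test this on the chosen generators. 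Generators in $I_\LL$ are handled by the chiral Poisson property of $I_\LL$. For a $(j+1)$-minor $m$, which lies in $R\subseteq J_\infty R$, Theorem~\ref{Th:Poisson-structure-arc-spaces} gives $a_{(n)}m=0$ for $n\ge 1$ and $a_{(0)}m=\{a,m\}$, so everything comes down to showing $\{a,m\}\in I_j$.

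The term $\{a,m\}$ lies in $R$, so what I really need is that the determinantal ideal $P_j\subseteq R$ generated by the $(j+1)$-minors of $\mathscr{M}_0$ is preserved by every Hamiltonian derivation $\{a,-\}$; equivalently, that the closed subset of $X$ where $\on{rank}\mathscr{M}_0\le j$ is a Poisson subvariety. This is exactly the assertion that the Hamiltonian vector fields are tangent to the strata of the Brown--Gordon rank stratification of $X$, which I would quote from \cite{Brown-Gordon} (passing to radicals where necessary, again via Lemma~\ref{Lem:Poisson-ideal}). I expect this to be the one genuinely nontrivial input: a direct attack would differentiate $\det\mathscr{M}_0$ entrywise and expand $\{a,\{x^p,x^q\}\}$ by the Jacobi identity, but the resulting terms $\{\{a,x^p\},x^q\}$ do not manifestly sit in $P_j$, so I would lean on the geometric tangency statement of Brown and Gordon rather than on a bare minor computation.
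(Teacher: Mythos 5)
Your overall strategy coincides with the paper's: both arguments prove part (1) from Lemma \ref{Lem:crucial} together with \cite[Lemma 3.1 (1)]{Brown-Gordon}, and both reduce part (2) to the finite-dimensional fact of Brown--Gordon that the rank strata of $X$ are Poisson, then lift this to the arc space via the Leibniz rule \eqref{eq:PVA-4}, Theorem \ref{Th:Poisson-structure-arc-spaces}, and Lemma \ref{Lem:chiral Poisson-def}.

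There is, however, one step in your part (2) that does not close as written. You generate $I_j$ by $I_\LL$ together with the $(j+1)$-minors of $\mathscr{M}_0$, and you then need $\{a,m\}\in I_j$ for each minor $m$. But what \cite[Lemma 3.1 (2)]{Brown-Gordon} provides --- and what the geometric statement ``the rank-$\le j$ locus is a Poisson subvariety'' actually means --- is that the \emph{radical} defining ideal $\mathscr{I}_j=\sqrt{P_j}$ is a Poisson ideal of $R$; the determinantal ideal $P_j$ itself need not be stable under Hamiltonian derivations, so you only learn $\{a,m\}\in\sqrt{P_j}$, which need not lie in $I_j$. Your proposed escape, ``passing to radicals via Lemma \ref{Lem:Poisson-ideal}'', runs in the wrong direction: that lemma deduces that $\sqrt{I_j}$ is chiral Poisson \emph{from} the hypothesis that $I_j$ is, and the latter is exactly what is in doubt. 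The repair is immediate and is precisely what the paper does: instead of the minors, take as generators the radical ideal $\mathscr{I}_j$ itself (the defining ideal of $\bar{\LL}_j$), i.e. work with the ideal $\mathscr{I}_j\otimes_R \J R$ (together with $I_\LL$ if you want the subscheme inside $\LL$). Its zero locus is still $\LL_j$, and now for any generator $b\in\mathscr{I}_j$ one has $a_{(0)}b=\{a,b\}\in\mathscr{I}_j$ and $a_{(n)}b=0$ for $n\ge 1$ by Theorem \ref{Th:Poisson-structure-arc-spaces}, so your Leibniz-rule computation goes through verbatim and Lemma \ref{Lem:chiral Poisson-def} yields that the ideal is chiral Poisson. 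With that one substitution of generating set, your proof is the paper's proof.
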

\begin{proof}
Part (1)  is clear by Lemma \ref{Lem:crucial} and \cite[Lemma 3.1 (1)]{Brown-Gordon}. 

(2) 
Let $\mathscr{I}_j$ be the defining ideal of $\bar{\LL}_j$. 
By \cite[Lemma 3.1 (2)]{Brown-Gordon}, $\mathscr{I}_j$ is a Poisson ideal of $R$. 
On the other hand, observe that for $n \ge 0$, 
$$(\pi_{n,0}^{X})^{-1}(\bar{\LL}_j) 
\cong \bar{\LL}_j \times_{X} J_n X.$$  Hence the defining 
ideal of $\LL_j=(\pi_{n,0}^{X})^{-1}(\bar{\LL}_j)$ 
is $\mathscr{I}_j {\otimes}_R J_\infty R$. 
So it is enough to show that $\mathscr{I}_j {\otimes}_R J_\infty R$ is chiral Poisson.

Let $u =\sum_i b_i c_i \in \mathscr{I}_j {\otimes}_R J_\infty R$, with $b_i \in 
\mathscr{I}_j, c_i \in J_\infty R$, 
Then by \eqref{eq:PVA-4} and Theorem \ref{Th:Poisson-structure-arc-spaces}, 
for all  $a \in R$ and $k \ge 0$, 
\begin{align*}
a_{(k)} u = \sum_i \left( (a_{(k)} b_i) \cdot c_i + b_i \cdot (a_{(k)} c_i )\right) 
= \sum_i \left( \delta_{k,0} \{ a, b_i\} \cdot c_i + b_i \cdot (a_{(k)} c_i )\right) 
\end{align*}
is in $\mathscr{I}_j {\otimes}_R J_\infty R$ 
since $\mathscr{I}_j$ is a Poisson ideal of $R$. 
So $\mathscr{I}_j {\otimes}_R J_\infty R$ is a chiral Poisson ideal by Lemma 
\ref{Lem:chiral Poisson-def}. 

%(3) Let $Z$ be an irreducible component of ${\LL}_j^0$. 
%Denoting by $X_1,\ldots,X_k$ the irreducible components of 
%$\bar{\LL}_j^0$, we have  
%$$\LL_j^0=(\pi_{\infty,0}^X)^{-1}(X_1) \cup \ldots \cup (\pi_{\infty,0}^X)^{-1}(X_k),$$ 
%since $\LL_j^0=(\pi_{\infty,0}^X)^{-1}(\bar{\LL}_j^0)$. 
%Moreover the subsets 
%$(\pi_{\infty,0}^X)^{-1}(X_i)$'s are the irreducible 
%components of $\LL_j^0$, and 
%so $Z=(\pi_{\infty,0}^X)^{-1}(X)$ for some irreducible 
%component $X$ of $\bar{\LL}_j^0$. 
%By \cite[Lemma 3.1 (5)]{Brown-Gordon}, 
%$X$ has dimension at least $j$. Observing that 
%$\pi_{\infty,n}^X (Z)=(\pi_{n,0}^X)^{-1}(X)$, we deduce that  
%$\pi_{\infty,n}^X (Z)$ has dimension at least $(n+1)j$. 
\end{proof}

%We define the {\em rank stratification} of $\LL$ 
%to be the disjoint union of locally closed subsets: 
%$$\LL = \bigsqcup_j \LL^0_j.$$
%
%
%Note that
%$$X_V=\pi_{\infty,0}(\LL)= \bigsqcup_j \pi_{\infty,0}(\LL^0_j)$$
%is exactly  the rank stratification of $X_V$ defined by Brown and Gordon \cite{Brown-Gordon}.
%

\section{Chiral Poisson cores and chiral symplectic cores} 
\label{sec:chiral Poisson-cores} 
Let {$X=\Spec R$ } be a reduced Poisson scheme. 
For $I$ an ideal of $R$, 
the {\em Poisson core} of $I$ is the biggest 
Poisson ideal contained in $I$. 
We denote it by $\P_R(I)$. 
The {\em symplectic core} $\Core_R(x)$ of a point $x \in X$  is  
the equivalence class of $x$ for $\sim$, with 
$$x \sim y \iff \P_R(\m_x) =\P_R(\m_y),$$ 
where $\m_x$ denotes the maximal ideal of $R$ corresponding to $x$. 
We refer the reader to \cite{Brown-Gordon} for more 
details about Poisson cores and symplectic cores.  

The aim of this section is to define analogue notions 
in the setting of vertex Poisson algebras. 

Let $V$ be a vertex Poisson algebra. 
By {\em ideal of $V$} 
we mean an ideal of $V$ in the associative sense. 
We will always specify {\em 
vertex Poisson ideal} or {\em chiral Poisson ideal} (see Section \ref{sec:PVA}) 
if necessary.
An ideal $I$ of $V$ is said to be {\em prime} if it is prime in the associative sense.
%$$(ab \in I ) \Longrightarrow (a \in I \text{ or }b \in I), \qquad \forall\, a,b \in V.$$ 

%Analogously to the Poisson cores in a Poisson algebra, we introduce 
%the following definition. 
\begin{Def} \label{Def:chiral Poisson-core}
The {\em chiral Poisson core} of an ideal $I$ of $V$ is 
the biggest chiral Poisson ideal of $V$ 
contained in $I$. It exists since the sum of
two chiral Poisson ideals is chiral Poisson. We denote the chiral Poisson core of $I$ by 
$\P_V(I)$. 
\end{Def}
%If $I$ is prime then so also is $\P_V(I)$ \cite[\S3.3.2]{Dixmier}. 

%A prime 
%ideal $\p$ of $V$ is call {\em chiral Poisson primitive} if $\p =\P_V(\m)$ 
%for a maximal ideal $\m$ of $V$.

%Set 
%$$\mathscr{D}:=\{ V \to V, \, b \mapsto a_{(n)} b \; ; 
%\; a \in V, \, n \ge 0\} \subset {\rm Der}(V).$$

\begin{Lem} \label{Lem:Poisson-core-description} 
%Let $\mathscr{D}$ be the set of all derivations 
%$a_{(n)} \in {\rm Der}(V)$, for $a \in V$ and 
%$n \in \Z_{\ge 0}$, 
%and 
Let $I$ be an ideal of $V$. 
\begin{enumerate}
\item $\P_V(I)= \{x \in I \mid a^1_{(n_1)}\ldots 
a^k_{(n_k)} x \in I \text{ for all } a^i\in V,\, k \ge 0, 
\, n_i \ge 0\}$. 
\item {\rm{(}}\cite[Lemma 3.3.2(ii)]{Dix96}{\rm{)}} If $I$ is prime, then $\P_V(I)$ is prime. 
\item If $I$ is radical, then $\P_V(I)$ is radical. 
\end{enumerate}
\end{Lem}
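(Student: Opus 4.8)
The statement to prove is Lemma~\ref{Lem:Poisson-core-description}, with three parts. Let me sketch a proof plan.

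For part (1), the plan is to characterize $\P_V(I)$ explicitly. Let me denote by $J$ the set on the right-hand side,
$$J = \{x \in I \mid a^1_{(n_1)}\cdots a^k_{(n_k)} x \in I \text{ for all } a^i \in V,\, k \ge 0,\, n_i \ge 0\}.$$
First I would check that $J$ is a chiral Poisson ideal contained in $I$: it is clearly contained in $I$ (take $k=0$), and applying any further operator $b_{(m)}$ to an element $x \in J$ produces an element still in $J$, since prepending $b_{(m)}$ to the strings $a^1_{(n_1)}\cdots a^k_{(n_k)}$ just yields longer admissible strings. I also need $J$ to be an associative ideal, which follows from the Leibniz rule \eqref{eq:PVA-4}: the derivations $a_{(n)}$ satisfy a Leibniz rule, so multiplying $x \in J$ by an arbitrary $c \in V$ and applying a string of operators expands, via \eqref{eq:PVA-4}, into a sum of terms each of which lands in $I$. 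Conversely, any chiral Poisson ideal $K \subseteq I$ must be contained in $J$: if $x \in K$ then every $a^1_{(n_1)}\cdots a^k_{(n_k)} x$ remains in $K \subseteq I$ by definition of chiral Poisson ideal, so $x \in J$. Hence $J$ is the biggest such ideal, i.e. $J = \P_V(I)$.

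Part (2) is attributed to \cite[Lemma 3.3.2(ii)]{Dix96}, so I would either cite it directly or adapt the classical Poisson argument. The idea is: suppose $I$ is prime and $xy \in \P_V(I)$ with $x \notin \P_V(I)$; I want $y \in \P_V(I)$. Using the characterization in (1), one argues by induction on the length of an operator string applied to $xy$, repeatedly using the Leibniz rule \eqref{eq:PVA-4} to distribute $a_{(n)}$ across the product, together with primeness of $I$ to separate factors. This is the standard Dixmier-type argument transported to the chiral setting; the Leibniz rule makes the derivations $a_{(n)}$ behave exactly as Poisson brackets do in the classical case, so the proof goes through mutatis mutandis.

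Part (3) should follow from Lemma~\ref{Lem:Poisson-ideal}: if $I$ is radical, I want $\P_V(I)$ radical. The clean approach is to show $\sqrt{\P_V(I)} \subseteq I$ and then invoke maximality. Indeed $\P_V(I) \subseteq I$ and $I$ radical gives $\sqrt{\P_V(I)} \subseteq \sqrt{I} = I$. By Lemma~\ref{Lem:Poisson-ideal}, $\sqrt{\P_V(I)}$ is again a chiral Poisson ideal, and it is contained in $I$, so by maximality of the chiral Poisson core it is contained in $\P_V(I)$; the reverse inclusion is automatic, whence $\P_V(I) = \sqrt{\P_V(I)}$ is radical.

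The main obstacle I anticipate is verifying the ideal and chiral-Poisson closure properties in part (1) with full care, particularly confirming that $J$ is an associative ideal: one must be sure that the Leibniz rule \eqref{eq:PVA-4} genuinely lets every term of $a^1_{(n_1)}\cdots a^k_{(n_k)}(cx)$ be recognized as lying in $I$, which requires tracking how the operators distribute over the product $cx$ and using that each resulting summand ends in a string applied to $x$. The primeness argument in part (2) is the other delicate point, but since it is already available in \cite{Dix96} and transfers formally, I expect the bulk of the genuine work to be the bookkeeping in part (1).
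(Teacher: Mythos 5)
Your proposal is correct and follows essentially the same route as the paper: the same explicit set $J$ with the two-inclusion maximality argument for (1), deferral to Dixmier for (2), and the identical $\sqrt{\P_V(I)}\subseteq \sqrt{I}=I$ plus Lemma~\ref{Lem:Poisson-ideal} maximality argument for (3). Your extra care in verifying via the Leibniz rule \eqref{eq:PVA-4} that $J$ is an associative ideal fills in a step the paper compresses into ``by construction,'' but it is the same proof.
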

\begin{proof}
Set 
$J =\{x \in I \mid a^1_{(n_1)}\ldots 
a^k_{(n_k)} x \in I \text{ for all } a^i\in V, \, k \ge 0, 
\, n_i \ge 0\}$. 

(1) 
By construction, $J \subset I$ and 
$J$ is chiral Poisson. 
%The relations \eqref{eq:Poisson-module-1}, 
%\eqref{eq:Poisson-module-2}, 
%\eqref{eq:Poisson-module-3}, 
%\eqref{eq:commutatoer-pv} are automatically 
%satisfied because $V$ is Poisson. 
Hence $J\subset \P_V(I)$. 
But if $K$ is a chiral Poisson ideal of $V$ 
contained in $I$, then for all $x \in K$, $a\in V$ and $n\geq 0$,
%and $D\in\mathscr{D}$, 
$a_{(n)}x \in K \subset I$, 
whence $x \in J$.
In conclusion, $J=\P_V(I)$. 

%(2) results from \cite[Lemma 3.3.2(ii)]{Dix96}. 

(3) Assume that $I$ is radical. Since $\P_V(I)$ is chiral Poisson, $\sqrt{\P_V(I)}$ 
is chiral Poisson as well by Lemma \ref{Lem:Poisson-ideal}, 
and it is contained in $\sqrt{I}$ since $\P_V(I)$ is contained in $I$.  
Hence, $$\sqrt{\P_V(I)} \subset \P_V(\sqrt{I})=\P_V(I).$$ 
But clearly $\P_V(I) \subset \sqrt{\P_V(I)}$, whence the equality 
$\sqrt{\P_V(I)}=\P_V(I)$ and the statement. 
\end{proof}

\begin{Co} \label{Co:minimal-prime-vertex-ideals}
Assume that there are finitely many minimal 
prime ideals $\p_1,\ldots,\p_r$ over $I$, that is, 
$I=\p_1\cap \ldots \cap \p_r$, and the 
prime ideals $\p_1,\ldots,\p_r$ are minimal.  
If $I$ is chiral Poisson, then so are the prime ideals $\p_1,\ldots,\p_r$. 
\end{Co}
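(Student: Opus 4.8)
The plan is to reduce the statement to a purely associative-algebra fact about how a chiral Poisson ideal interacts with the minimal primes over it, exploiting the characterization of chiral Poisson ideals in Lemma~\ref{Lem:chiral Poisson-def} together with part~(2) of Lemma~\ref{Lem:Poisson-core-description}. First I would observe that, by Lemma~\ref{Lem:Poisson-core-description}(2), the chiral Poisson core $\P_V(\p_i)$ of each minimal prime $\p_i$ is itself a prime chiral Poisson ideal, and by definition $\P_V(\p_i)\subset \p_i$. The key reduction is to show that in fact $\P_V(\p_i)=\p_i$, which gives the conclusion immediately since $\p_i$ is then chiral Poisson.

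The main step is therefore to prove $\p_i\subset \P_V(\p_i)$. Here I would use that $I$ is chiral Poisson and that $I=\p_1\cap\cdots\cap\p_r$ with the $\p_i$ minimal over $I$, so that the $\p_i$ are precisely the minimal primes of $V/I$. Intersecting the inclusions $\P_V(\p_i)\subset\p_i$ gives $\bigcap_i \P_V(\p_i)\subset \bigcap_i\p_i=I$; since each $\P_V(\p_i)$ is chiral Poisson and a finite intersection (equivalently, the sum viewpoint dualized) of chiral Poisson ideals is chiral Poisson, the ideal $\bigcap_i\P_V(\p_i)$ is a chiral Poisson ideal contained in $I$, hence contained in the chiral Poisson ideal $I$ itself; conversely $I\subset\P_V(\p_i)$ for each $i$ because $I$ is a chiral Poisson ideal contained in $\p_i$ and $\P_V(\p_i)$ is the largest such. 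Thus $I=\bigcap_i\P_V(\p_i)$, and each $\P_V(\p_i)$ is a prime ideal lying between $I$ and $\p_i$. By minimality of $\p_i$ over $I$, no prime strictly between $I$ and $\p_i$ can be proper, forcing $\P_V(\p_i)=\p_i$.

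Let me make the last inference precise, since it is where the finiteness hypothesis is used. I would argue that $\P_V(\p_i)$ is a prime ideal containing $I$ and contained in $\p_i$; since $\p_i$ is a \emph{minimal} prime over $I$, any prime containing $I$ and contained in $\p_i$ must equal $\p_i$. This yields $\P_V(\p_i)=\p_i$, so $\p_i$ is chiral Poisson, as claimed.

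The hard part will be establishing that $I\subset\P_V(\p_i)$ cleanly and that the finite intersection of chiral Poisson ideals is chiral Poisson in the vertex Poisson setting; the former follows from the maximality defining the chiral Poisson core once we know $I$ is chiral Poisson and contained in $\p_i$, while the latter is routine from the fact that each $a_{(n)}$ is a derivation (axiom~\eqref{eq:PVA-4}) and hence preserves intersections of $a_{(n)}$-stable ideals. The only genuinely delicate point is verifying that $\P_V(\p_i)$ is prime, but this is exactly Lemma~\ref{Lem:Poisson-core-description}(2), so the argument is largely formal once that input is in hand.
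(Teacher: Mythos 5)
Your proposal is correct and is essentially the paper's own argument: the whole proof is the chain $I \subset \P_V(\p_i) \subset \p_i$ (by maximality of the chiral Poisson core, since $I$ is chiral Poisson and contained in $\p_i$), primeness of $\P_V(\p_i)$ from Lemma~\ref{Lem:Poisson-core-description}(2), and minimality of $\p_i$ over $I$ forcing $\P_V(\p_i)=\p_i$. Your middle paragraph establishing $I=\bigcap_i \P_V(\p_i)$ and the closure of chiral Poisson ideals under finite intersection is superfluous (and the finiteness of $r$ is not actually what powers the minimality step, which applies to each $\p_i$ separately); the three-step chain above already suffices.
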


\begin{proof} 
If $I$ is chiral Poisson, 
then $I \subset \P_V(\p_i) \subset \p_i$ for all $i$. 
But by Lemma \ref{Lem:Poisson-core-description} (2), 
the ideals $\P_V(\p_i)$, $i=1,\ldots,r$, are all prime. 
By minimality of the prime ideals $\p_i$ we deduce that 
$\p_i =\P_V(\p_i)$ for all $i$. 
In particular, the prime ideals $\p_i$, 
$i=1,\ldots,r$, are all chiral Poisson. 
\end{proof}

Set 
$$\LL := \Specm (V).$$ 
We define a relation $\sim$ 
on $\LL$ by
\begin{align*}
x \sim y \iff \P_V(\m_x) =\P_V(\m_y),
\end{align*}
where $\m_x$ is the maximal ideal corresponding to $x\in \LL$.

Clearly $\sim$ is an equivalence relation. 
We denote the equivalence class in $\LL$ 
of $x$ by $\Core_{\LL}(x)$, 
so that
$$\LL= \bigsqcup_{x} \Core_{\LL}(x).$$ 
We call the set $\Core_{\LL}(x)$ the {\em chiral symplectic core of $x$ 
in $\LL$}.

For $I$ an ideal of $V$, we denote by $\V(I)$
the corresponding zero locus in $\Specm V$, that 
is, 
$$\V(I)= \{x \in \LL \mid 
f(x)=0 \text{ for all } f \in I\} =\{x \in \LL \mid \m_x \supset I\}
,$$ 
and by $\widetilde \V (I)$ 
the corresponding 
closed scheme for the Zariski topology, that is, 
$$\widetilde \V(I)=\{\p \in \Spec V \mid \p \supset I\}.$$

%\commentA{I changed because otherwise Lemma 6.4 looked strange.}

\begin{Lem} \label{Lem:defining-ideal}
Let $x \in\LL$. 
Then $\widetilde \V( \P_V(\m_x))$ is the smallest 
chiral Poisson scheme %for the inclusion 
containing $\overline{\Core_{\LL}(x)}$. 
Moreover, it is reduced and irreducible. 
\end{Lem}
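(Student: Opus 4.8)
The plan is to establish the three assertions of Lemma~\ref{Lem:defining-ideal} in turn, relying on the characterization of the chiral Poisson core from Lemma~\ref{Lem:Poisson-core-description} together with its primality and radicality properties. First I would recall that $\P_V(\m_x)$ is, by Definition~\ref{Def:chiral Poisson-core}, the biggest chiral Poisson ideal contained in $\m_x$, so that $\widetilde\V(\P_V(\m_x))$ is indeed a chiral Poisson subscheme of $\Spec V$: its defining ideal $\P_V(\m_x)$ is chiral Poisson. Since $\m_x$ is a maximal, hence prime, ideal, Lemma~\ref{Lem:Poisson-core-description}(2) shows $\P_V(\m_x)$ is prime, and a prime ideal is in particular radical. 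This immediately yields the final sentence of the statement: $\widetilde\V(\P_V(\m_x))$ is reduced because $\P_V(\m_x)$ is radical, and irreducible because $\P_V(\m_x)$ is prime.

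Next I would address the containment $\overline{\Core_{\LL}(x)}\subseteq\widetilde\V(\P_V(\m_x))$. The key observation is that the chiral symplectic core is defined by the equivalence $y\sim x\iff \P_V(\m_y)=\P_V(\m_x)$. For any $y\in\Core_{\LL}(x)$ we have $\P_V(\m_y)=\P_V(\m_x)\subseteq\m_y$, so $\m_y\supseteq\P_V(\m_x)$, which means precisely that $y\in\V(\P_V(\m_x))\subseteq\widetilde\V(\P_V(\m_x))$. Hence $\Core_{\LL}(x)$ is contained in this closed set, and since $\widetilde\V(\P_V(\m_x))$ is closed in the Zariski topology, the closure $\overline{\Core_{\LL}(x)}$ is contained in it as well.

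The main work, and the step I expect to be the genuine obstacle, is minimality: I must show that any chiral Poisson subscheme containing $\overline{\Core_{\LL}(x)}$ already contains $\widetilde\V(\P_V(\m_x))$. So let $\widetilde\V(I)$ be a chiral Poisson subscheme with $I$ a chiral Poisson ideal and $\overline{\Core_{\LL}(x)}\subseteq\widetilde\V(I)$; I may replace $I$ by its radical, which stays chiral Poisson by Lemma~\ref{Lem:Poisson-ideal}, so assume $I$ is radical. The inclusion of varieties gives $x\in\widetilde\V(I)$, hence $I\subseteq\m_x$. Since $I$ is a chiral Poisson ideal contained in $\m_x$, maximality of the chiral Poisson core forces $I\subseteq\P_V(\m_x)$, which translates into the reverse inclusion $\widetilde\V(\P_V(\m_x))\subseteq\widetilde\V(I)$. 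This is exactly the minimality statement, so $\widetilde\V(\P_V(\m_x))$ is the smallest chiral Poisson subscheme containing $\overline{\Core_{\LL}(x)}$.

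The one delicate point to verify carefully is that $I\subseteq\m_x$ genuinely follows from $\overline{\Core_{\LL}(x)}\subseteq\widetilde\V(I)$: this uses that $x\in\Core_{\LL}(x)\subseteq\overline{\Core_{\LL}(x)}$ and that $\widetilde\V(I)$ containing the point $x$ means its radical defining ideal is contained in $\m_x$. Because $\m_x$ is a maximal ideal and $I$ is radical, $x\in\widetilde\V(I)$ is equivalent to $I\subseteq\m_x$, so no subtlety arises from nilpotents. With all three assertions in hand the lemma follows.
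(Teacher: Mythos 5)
Your proof is correct and follows essentially the same route as the paper's: reducedness and irreducibility of $\widetilde\V(\P_V(\m_x))$ from Lemma~\ref{Lem:Poisson-core-description}, the containment $\overline{\Core_{\LL}(x)}\subset\widetilde\V(\P_V(\m_x))$ from the definition of the equivalence relation, and minimality from the maximality of $\P_V(\m_x)$ among chiral Poisson ideals contained in $\m_x$. The only deviation is your replacement of $I$ by $\sqrt{I}$, which is harmless but unnecessary: by the paper's definition $\widetilde\V(I)=\{\p\in\Spec V\mid \p\supset I\}$, so $x\in\widetilde\V(I)$ already means $I\subseteq\m_x$ with no hypothesis on nilpotents.
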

%We will see that the equality 
%$\overline{\Core_{\LL}(x)}=\V( \P_V(\m_x))$ 
%holds 
%in many examples. 

\begin{proof} 
First of all, since $\P_V(\m_x)$ is a 
chiral Poisson, prime and radical ideal of $V$ by 
Lemma \ref{Lem:Poisson-core-description}, 
$\widetilde \V( \P_V(\m_x))$ is a reduced 
irreducible (closed) chiral Poisson subscheme of $\LL$.  
For any $y \in  \Core_{\LL}(x)$, we have  
$\m_y \supset \P_V(\m_y) =\P_V(\m_x)$. 
Hence 
$\overline{\Core_{\LL}(x)}\subset \widetilde \V( \P_V(\m_x))$. 
Next, if $I$ is a chiral Poisson ideal of $V$ such 
that $\overline{\Core_{\LL}(x)}\subset \widetilde \V(I)$, 
then in particular $\m_x \supset I$. Since $I$ is chiral Poisson, 
we get that $\m_x \supset \P_V(\m_x) \supset I$ by maximality of $\P_V(\m_x)$. 
Hence $\overline{\Core_{\LL}(x)} 
\subset \widetilde\V( \P_V(\m_x)) \subset \widetilde\V(I)$. 
This proves the statement. 
\end{proof}

\begin{Lem} \label{Lem:quotient} 
Let $\LL'$ be a reduced closed vertex Poisson subscheme of $\LL$. 
Then for any $x \in \LL'$, we have $\Core_{\LL'}(x)= \Core_{\LL}(x)$. 
%Let $\overline V = V/I$ be a vertex quotient algebra of $V$. 
%\commentA{This means that $I$ is a vertex Poisson algebra, all right? 
%Or is ^^ ^^ chiral Poisson'' enough?}
%Let $\m$ be a maximal ideals of $V$ containing $I$. 
%Then $\P_{\overline V}(\m/I) = \P_V(\m)/  I$
%and there is a bijection between 
%$\Core_{{\LL}}(\m)$ and $\Core_{\overline {\LL}}(\m/I),$ 
%with $\overline {\LL} =\Specm \overline V$. 
\end{Lem}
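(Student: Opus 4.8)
The plan is to realise $\LL'$ as $\Specm V'$ for the quotient vertex Poisson algebra $V'=V/\mathscr{I}$, where $\mathscr{I}$ is the (radical) vertex Poisson ideal defining the reduced closed vertex Poisson subscheme $\LL'\subset\LL$, and then to compare the chiral Poisson cores in $V$ and in $V'$ through the quotient map $q\colon V\twoheadrightarrow V'$. For $x\in\LL'$ I write $\m_x$ for the corresponding maximal ideal of $V$ and $\bar\m_x=q(\m_x)=\m_x/\mathscr{I}$ for the corresponding maximal ideal of $V'$. The first step is to record that $q$ induces an inclusion-preserving bijection between the chiral Poisson ideals of $V'$ and the chiral Poisson ideals of $V$ containing $\mathscr{I}$, with inverse $J'\mapsto q^{-1}(J')$. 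This follows at once from $q(a_{(n)}u)=q(a)_{(n)}q(u)$: the hypothesis that $\mathscr{I}$ is a vertex Poisson ideal guarantees that $V'$ is again a vertex Poisson algebra (so that $\P_{V'}$ makes sense) and that the derivations $a_{(n)}$ descend to $V'$, after which pushing forward and pulling back chiral Poisson ideals is routine.

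The second step is to identify the two cores. Since $x\in\LL'=\V(\mathscr{I})$ we have $\mathscr{I}\subset\m_x$, and as $\mathscr{I}$ is chiral Poisson, maximality in Definition~\ref{Def:chiral Poisson-core} forces $\mathscr{I}\subset\P_V(\m_x)$. Hence $\P_V(\m_x)$ lies in the correspondence of the first step and, being the largest chiral Poisson ideal of $V$ contained in $\m_x$, it corresponds to the largest chiral Poisson ideal of $V'$ contained in $\bar\m_x$, that is, to $\P_{V'}(\bar\m_x)$. Thus for every $x\in\LL'$,
\begin{align*}
\P_V(\m_x)=q^{-1}\bigl(\P_{V'}(\bar\m_x)\bigr).
\end{align*}
As $q$ is surjective, $q^{-1}$ is injective on ideals, so for $x,y\in\LL'$ one has $\P_{V'}(\bar\m_x)=\P_{V'}(\bar\m_y)$ if and only if $\P_V(\m_x)=\P_V(\m_y)$. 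In other words the equivalence relations defining $\Core_{\LL'}$ and $\Core_{\LL}$ agree on $\LL'$, which already gives $\Core_{\LL'}(x)=\Core_{\LL}(x)\cap\LL'$.

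The remaining step, which is the real point, is to upgrade this intersection to an equality by showing that $\Core_{\LL}(x)\subset\LL'$ whenever $x\in\LL'$. If $y\in\Core_{\LL}(x)$ then $\P_V(\m_y)=\P_V(\m_x)\supset\mathscr{I}$, so $\m_y\supset\P_V(\m_y)\supset\mathscr{I}$ and therefore $y\in\V(\mathscr{I})=\LL'$. The mechanism is exactly that the chiral Poisson core of $\m_x$ contains the defining ideal $\mathscr{I}$, so every point equivalent to $x$ again annihilates $\mathscr{I}$ and cannot leave $\LL'$. Combining this with the second step yields $\Core_{\LL}(x)=\Core_{\LL}(x)\cap\LL'=\Core_{\LL'}(x)$, as desired. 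I expect no serious obstacle beyond keeping the two layers of maximal ideals straight; the only thing to verify carefully is that $\P_V(\m_x)$ genuinely contains $\mathscr{I}$, which is precisely what makes both the core identification and the containment $\Core_{\LL}(x)\subset\LL'$ work.
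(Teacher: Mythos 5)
Your proof is correct and follows essentially the same route as the paper: identify $\LL'=\Specm(V/\mathscr{I})$, establish the identification $\P_{V/\mathscr{I}}(\m_x/\mathscr{I})=\P_V(\m_x)/\mathscr{I}$ (which the paper asserts and you prove via the ideal correspondence, using $\mathscr{I}\subset\P_V(\m_x)$ by maximality), and then show $\Core_{\LL}(x)\subset\LL'$ by the same mechanism, namely that $y\in\Core_{\LL}(x)$ gives $\m_y\supset\P_V(\m_y)=\P_V(\m_x)\supset\mathscr{I}$. The only difference is that you spell out the bijection between chiral Poisson ideals of $V/\mathscr{I}$ and chiral Poisson ideals of $V$ containing $\mathscr{I}$, which the paper leaves implicit.
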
 
\begin{proof} 
Since $\LL'$ is a reduced closed vertex Poisson subscheme of $\LL$, 
$\LL' =\Specm (V/I)$, where $I$ is a vertex Poisson ideal of $V$. 
The maximal ideals of $V/I$ are precisely the 
quotients $\m/I$, where $\m$ is a maximal ideal of $V$ 
containing $I$, 
and $\P_{V/I}(\m/I) = \P_V(\m)/I$. 

Hence for $x \in \LL'$, we have 
\begin{align*}
\Core_{\LL'}(x) & =\{ y \in \LL' \mid \P_{V/I}(\m_y/I) =\P_{V/I}(\m_x/I) \} &\\
&=\{ y \in \LL' \mid \P_{V}(\m_y)/I=\P_{V}(\m_x)/I \} 
= \{ y \in \LL' \mid \P_{V}(\m_y)=\P_{V}(\m_x) \} ,&
\end{align*}
where $\m_x$ 
is the maximal ideal of $V$ corresponding to $x$. 
The maximal ideal $\m_x$ contains $I$ because $x \in  \LL'$.
On the other hand, 
\begin{align*}
\Core_{\LL}(x)=\{y \in \LL \mid \P_{V}(\m_y)=\P_{V}(\m_x) \}. 
\end{align*}
But if $ \P_{V}(\m_y)=\P_{V}(\m_x)$ for some $y \in \LL$, then 
$\m_y \supset  \P_{V}(\m_y)=\P_{V}(\m_x) \supset I$ because 
$I$ is a chiral Poisson ideal of 
$V$ contained in $\m_x$. 
This shows that if $y \in \Core_{\LL}(x)$, then $y \in \LL'$. 
Therefore 
\begin{align*}
\Core_{\LL}(x)=\{y \in \LL' \mid \P_{V}(\m_y)=\P_{V}(\m_x) \} =
\Core_{\LL'}(x).
\end{align*}
\end{proof}

\begin{Pro} \label{Pro:Poisson_center} 
Let $z \in \mathcal{Z}(V)$ and $x \in \LL$. 
Then $z$ is constant on $\V(\P_V(\m_x))$ 
%Then $z$ is constant on $\Core_{\LL}(x)$ 
and so on $\overline{\Core_{\LL}(x)}$.  
\end{Pro}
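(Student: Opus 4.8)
The plan is to reduce the statement to the single algebraic fact that $z - z(x)$ lies in the chiral Poisson core $\P_V(\m_x)$, where $z(x) \in \C$ denotes the value of $z$ at $x$, i.e. the unique scalar with $z - z(x)\vac \in \m_x$ (identifying $\C$ with $\C\vac \subset V$). Once this is established the proposition follows at once: every $y \in \V(\P_V(\m_x))$ satisfies $\m_y \supset \P_V(\m_x) \ni z - z(x)$, so $z(y) = z(x)$, which is exactly the assertion that $z$ is constant on $\V(\P_V(\m_x))$.

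To prove $z - z(x) \in \P_V(\m_x)$ I would invoke the explicit description of the chiral Poisson core in Lemma \ref{Lem:Poisson-core-description}(1): it suffices to check that $z - z(x) \in \m_x$ and that $a^1_{(n_1)} \cdots a^k_{(n_k)}(z - z(x)) \in \m_x$ for every $a^1, \ldots, a^k \in V$ and $n_1, \ldots, n_k \ge 0$. The membership $z - z(x) \in \m_x$ holds by the very definition of $z(x)$, settling the case $k = 0$. For $k \ge 1$ the key point is the centrality of $z$: by the second description of $\Zent(V)$ (obtained from \eqref{eq:PVA-2}) we have $a_{(n)} z = 0$ for all $a \in V$ and $n \ge 0$; moreover each $a_{(n)}$ with $n \ge 0$ is a derivation of the commutative product by \eqref{eq:PVA-4}, hence annihilates the unit $\vac$ and therefore the constant $z(x)\vac$. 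Thus $a^k_{(n_k)}(z - z(x)) = 0$, and the entire iterated expression collapses to $0 \in \m_x$.

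This establishes $z - z(x) \in \P_V(\m_x)$ and hence the first assertion. For the second, I would note that by Lemma \ref{Lem:defining-ideal} (or directly, since $\Core_{\LL}(x)$ consists of points $y$ with $\P_V(\m_y) = \P_V(\m_x) \subset \m_y$) one has $\overline{\Core_{\LL}(x)} \subset \V(\P_V(\m_x))$, so the conclusion transfers verbatim to $\overline{\Core_{\LL}(x)}$.

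Since the computation is forced by centrality, I do not expect any real obstacle; the only step demanding a little care is verifying that all the higher-order conditions in the core description vanish, which reduces—exactly as above—to the two facts $a_{(n)}z = 0$ and $a_{(n)}\vac = 0$ for $n \ge 0$.
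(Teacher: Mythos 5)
Your proof is correct and follows essentially the same route as the paper: both arguments reduce to showing $z - \chi_x(z) \in \P_V(\m_x)$ using centrality (the paper notes that $z-\lambda$ is still central, so $a_{(n)}(z-\lambda)=0$ for all $a\in V$, $n\ge 0$, hence $z-\lambda\in\P_V(\m_x)\subset\m_y$), and both then pass to $\overline{\Core_{\LL}(x)}$ via Lemma \ref{Lem:defining-ideal}. Your explicit check of the iterated conditions in Lemma \ref{Lem:Poisson-core-description}(1), including that each derivation $a_{(n)}$ kills the constant $z(x)\vac$, is simply a spelled-out version of the paper's one-line justification.
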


\begin{proof} 
Let $y \in\V(\P_V(\m_x))$, and let $\chi_x,\chi_y$ be the homomorphisms 
$\chi_x \colon V\to \C$,  $\chi_y \colon V\to \C$, corresponding to the 
maximal ideals $\m_x,\m_y$. 
It is enough to show that $\chi_x(z)=\chi_y(z)$. 
Set $\lambda:= \chi_x(z)$. Then $z- \lambda \in \ker \chi_x = \m_x$. 
In addition since $z$ is in the center, so is $z-\lam$, and then $a_{(n)} (z-\lambda)=0$ 
for any $a \in V$ and $n \ge 0$.    
Therefore $z-\lam \in \P_V(\m_x) \subset \m_y$, whence $\chi_y(z)=\lam$.  
By Lemma \ref{Lem:defining-ideal}, we conclude that 
$z$ is constant on $\overline{\Core_{\LL}(x)}$. 
\end{proof}

%By Lemma \ref{Lem:quotient},
%$\Core_{\gr V}(\m) = \Core_{\J R_V}(\m)$
%for $\m\in \LL=\Specm (\gr V)\subset J_\infty X$.
\begin{Lem} \label{Lem:core-in-arc}
Let $X=\Specm R$ be a reduced Poisson scheme, 
and 
let $\LL$ be a closed vertex Poisson subscheme of $\J X$. 
Set $\bar \LL= \pi^X_{\infty,0}(\LL)$. Let $x \in \LL$.  
\begin{enumerate}
\item  We have $\pi_{\infty,0}^X(\Core_{\LL}(x)) 
\subset \Core_{\bar{\LL}}(\pi_{\infty,0}^X(x)).$   
\item If ${\rm rk}\,x=j$,
then 
$\Core_{\LL}(x) \subset {\LL}_j^{0}.$ 
\end{enumerate}
\end{Lem}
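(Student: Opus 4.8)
The plan is to handle the two parts separately, linking the chiral Poisson geometry of $\LL$ to the classical Poisson geometry of $\bar\LL$ through the algebra inclusion induced by $\pi^X_{\infty,0}$. Write $V=\C[\LL]$ and let $I_\LL\subset\J R$ be the vertex Poisson ideal with $V=\J R/I_\LL$. First I would check that $I_\LL\cap R$ is a Poisson ideal of $R$: for $a\in I_\LL\cap R$ and $b\in R$ the bracket $\{a,b\}=a_{(0)}b$ lies in $I_\LL$ (as $I_\LL$ is chiral Poisson) and in $R$ (by Theorem~\ref{Th:Poisson-structure-arc-spaces} the $0$-th product of two elements of $R$ stays in $R$), hence in $I_\LL\cap R$. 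Thus $\bar{R}:=R/(I_\LL\cap R)=\C[\bar\LL]$ is a Poisson algebra, embedded in $V$ as a Poisson subalgebra with bracket $\{a,b\}=a_{(0)}b$, and for $x\in\LL$ with image $x_0:=\pi^X_{\infty,0}(x)$ one has $\m_{x_0}=\m_x\cap\bar{R}$.

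For part (1), the heart of the matter is the identity $\P_{\bar{R}}(\m_{x_0})=\P_V(\m_x)\cap\bar{R}$, valid for every $x\in\LL$. The inclusion $\supseteq$ is the easy one: $K:=\P_V(\m_x)\cap\bar{R}$ is an ideal of $\bar{R}$ contained in $\m_{x_0}$, and it is a Poisson ideal because for $a\in K$, $b\in\bar{R}$ one has $\{a,b\}=a_{(0)}b\in\P_V(\m_x)\cap\bar{R}=K$ (here $a_{(0)}b\in\P_V(\m_x)$ since $\P_V(\m_x)$ is chiral Poisson, and $a_{(0)}b\in\bar{R}$ since $a,b\in\bar{R}$); maximality of the Poisson core then gives $K\subseteq\P_{\bar{R}}(\m_{x_0})$. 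For $\subseteq$ I would show that the ideal $\P_{\bar{R}}(\m_{x_0})\cdot V$ generated in $V$ is chiral Poisson: by the argument of Lemma~\ref{Lem:chiral Poisson-def} it suffices to check that $a_{(n)}$ with $a\in\bar{R}$ preserves it, and applying the Leibniz rule \eqref{eq:PVA-4} reduces this to the action on $b\in\P_{\bar{R}}(\m_{x_0})$, where \eqref{eq:10-Ara12} gives $a_{(0)}b=\{a,b\}\in\P_{\bar{R}}(\m_{x_0})$ and $a_{(n)}b=0$ for $n>0$. Since $\P_{\bar{R}}(\m_{x_0})\subset\m_{x_0}\subset\m_x$, this chiral Poisson ideal lies in $\m_x$, hence in $\P_V(\m_x)$ by maximality, giving $\P_{\bar{R}}(\m_{x_0})\subseteq\P_V(\m_x)\cap\bar{R}$. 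Granting the identity, if $y\in\Core_\LL(x)$ then $\P_V(\m_y)=\P_V(\m_x)$, so $\P_{\bar{R}}(\m_{y_0})=\P_V(\m_y)\cap\bar{R}=\P_V(\m_x)\cap\bar{R}=\P_{\bar{R}}(\m_{x_0})$, i.e. $y_0\in\Core_{\bar\LL}(x_0)$, which is exactly part (1).

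For part (2) I would argue directly from the rank stratification. By Lemma~\ref{Lem:rank-strata}(2) each $\LL_i$ is a chiral Poisson subscheme of $\J X$; let $J_i\subset V$ be its chiral Poisson defining ideal, so $\LL_i=\V(J_i)$. Since ${\rm rk}\,x=j$ we have $x\in\LL_j$, i.e. $J_j\subset\m_x$, whence $J_j\subset\P_V(\m_x)$ by maximality. Then for any $y\in\Core_\LL(x)$ we get $\m_y\supset\P_V(\m_y)=\P_V(\m_x)\supset J_j$, so $y\in\LL_j$ and ${\rm rk}\,y\le j$. Were ${\rm rk}\,y\le j-1$, the same argument with the roles of $x$ and $y$ exchanged (legitimate since $\sim$ is symmetric) and $\LL_{j-1}$ in place of $\LL_j$ would force $x\in\LL_{j-1}$, contradicting ${\rm rk}\,x=j$; hence ${\rm rk}\,y=j$ and $\Core_\LL(x)\subset\LL_j^0$.

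The step I expect to be the main obstacle is the inclusion $\subseteq$ in the identity of part (1): one must exhibit a single chiral Poisson ideal of $V$ that simultaneously contains the classical Poisson core $\P_{\bar{R}}(\m_{x_0})$ and stays inside $\m_x$. This is precisely where Lemma~\ref{Lem:chiral Poisson-def} is indispensable, since it reduces the chiral Poisson condition to the action of the degree-zero generators $a\in\bar{R}$, for which \eqref{eq:10-Ara12} makes all the relevant products explicit and keeps them inside $\bar{R}$; without this reduction one would have to control $a_{(n)}$ for all $a\in V$ and all $n\ge0$. The remaining ingredients—that $I_\LL\cap R$ is Poisson, that contractions of chiral Poisson ideals to $\bar{R}$ are Poisson, and the symmetry bookkeeping in part (2)—are routine.
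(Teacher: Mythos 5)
Your proof is correct, and it splits against the paper as follows. Part (1) is essentially the paper's own argument: the same key identity $\P_{\bar R}(\m_{x_0})=\P_V(\m_x)\cap\bar R$ (equation \eqref{eq:poisson-cores-I} in the paper's proof), with the inclusion $\supseteq$ obtained from maximality of the classical Poisson core, and $\subseteq$ obtained by showing that the ideal of $V$ generated by $\P_{\bar R}(\m_{x_0})$ is chiral Poisson via the reduction of Lemma~\ref{Lem:chiral Poisson-def}, the Leibniz rule \eqref{eq:PVA-4} and $a_{(n)}b=\delta_{n,0}\{a,b\}$ for $a,b\in R$, then invoking maximality of $\P_V(\m_x)$; you merely make the ideal generated by the core more explicit than the paper does. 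Part (2) is where you genuinely diverge: the paper deduces it from part (1) by projecting the core down to $\bar\LL$, citing \cite[Proposition~3.6]{Brown-Gordon} for $\Core_{\bar\LL}(\pi^X_{\infty,0}(x))\subset\bar{\LL}^0_j$, and pulling back along $\pi^X_{\infty,0}$ using Lemma~\ref{Lem:crucial}; you instead stay entirely in $V$, absorbing the chiral Poisson defining ideal of $\LL_j$ (Lemma~\ref{Lem:rank-strata}(2)) into $\P_V(\m_x)$ by maximality, so that equality of cores propagates membership in $\LL_j$, with the symmetric argument on $\LL_{j-1}$ excluding rank drop. Your version is self-contained---it uses neither part (1) nor the downstairs Brown--Gordon proposition---and is the exact chiral analogue of the determinantal-ideal argument in the classical setting, while the paper's is shorter once (1) and the classical fact are granted. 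Two small points you should make explicit, neither of which is a genuine gap: (i) Lemma~\ref{Lem:rank-strata}(2) produces a chiral Poisson ideal of $\J R$ cutting out $\LL_j$ inside $\J X$, whereas your $J_j$ must be an ideal of $V=\J R/I_\LL$; its image in $V$ is again chiral Poisson because $I_\LL$ is a vertex Poisson ideal (so the derivations $a_{(n)}$ descend), and for $y\in\LL$ one has $\m_y\supset J_j$ if and only if ${\rm rk}\,y\le j$ by Lemma~\ref{Lem:crucial}. (ii) In your check that $I_\LL\cap R$ is a Poisson ideal, chirality of $I_\LL$ directly gives $b_{(0)}a\in I_\LL$ for $a\in I_\LL\cap R$, $b\in R$; to conclude $\{a,b\}\in I_\LL$ you need skew-symmetry $\{a,b\}=-\{b,a\}$, or alternatively \eqref{eq:PVA-2} together with the $T$-stability of $I_\LL$, rather than chirality alone as stated.
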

%Since $x \in \LL$ in the lemma, note that 
%$\pi_{\infty,0}^X(\Core_{\LL}(x))=\pi_{\infty,0}^Y(\Core_{\LL}(x))$ 
%and $\pi_{\infty,0}^X(x)=\pi_{\infty,0}^Y(x)$. 

\begin{proof}
(1) We have 
$\LL= \Specm \J R/I$, with $I$ a vertex Poisson ideal of $\J R$. 

Let $x \in \LL$. Let us first show that: 
\begin{align} \label{eq:poisson-cores-I}
\P_{R/(I\cap R)}(\m_x \cap R/(I\cap R)) = \P_{\J R/I}(\m_x/I) \cap R/(I\cap R).
\end{align}
The inclusion  $\P_{\J R/I}(\m_x/I) \cap R/(I\cap R)
\subset \P_{R/(I\cap R)}(\m_x \cap R/(I\cap R))$ 
is clear because the left-hand side  
is Poisson and is contained in $\m_x \cap R/(I \cap R)$. 
For the converse inclusion, 
let $a \in R$, $k\in \Z_{\ge 0}$ and $b \in \P_{R/(I\cap R)}(\m_x \cap R/(I\cap R))$. 
Then by Theorem \ref{Th:Poisson-structure-arc-spaces} 
$$a_{(k)} (b + I\cap R) =\delta_{k,0}(\{a,b\} + \{a,I\cap R\}) 
\in  \P_{R/(I\cap R)}(\m_x \cap R/(I\cap R))$$ 
since $\P_{R/(I\cap R)}(\m_x \cap R/(I\cap R))$ 
and $I\cap R$ are Poisson.  
By Lemma~\ref{Lem:chiral Poisson-def}, 
this shows that $\P_{R/(I\cap R)}(\m_x \cap R/(I\cap R))$ 
is a chiral Poisson ideal of $\J R/I$, contained 
in $\m_x/I$, whence the expected equality \eqref{eq:poisson-cores-I}. 

Let now $y \in \Core_{\LL}(x)$. 
Then $\P_{\J R/I}(\m_y/I)= \P_{\J R/I}(\m_x/I)$. 
From \eqref{eq:poisson-cores-I}, we deduce that 
$$\P_{R/(I\cap R)}(\m_y \cap R/(I\cap R))
=\P_{R/(I\cap R)}(\m_x \cap R/(I\cap R)),$$
and so 
$$\pi_{\infty,0}^X(y) \in 
\Core_{\bar{\LL}} (\pi_{\infty,0}^X(x))$$
since $\m_y \cap R/(I\cap R)$ is the maximal ideal of $R/(I\cap R)$ corresponding to 
$\pi_{\infty,0}^X(y) \in \bar{\LL}$. 
This proves the statement. 

(2)
By  Lemma \ref{Lem:crucial}, 
${\LL}_j^{0} = (\pi_{\infty,0}^X)^{-1}(\overline {\LL}_j^0)$. 
On the other hand, by \cite[Proposition 3.6]{Brown-Gordon},
$\Core_{\bar{\LL}}(\pi_{\infty,0}^X(x)) 
\subset \overline {\LL}_j^0$. 
Hence by (1), 
$$\Core_{\LL}(x) \subset (\pi_{\infty,0}^X)^{-1} (\Core_{\bar{\LL}}(\pi_{\infty,0}^X(x))) 
\subset (\pi_{\infty,0}^X)^{-1}(\overline {\LL}_j^0) 
= {\LL}_j^{0}.$$
\end{proof}
%\begin{Quest} 
%Is the above statement true for any $\m \in\Specm V$? 
%To me, we need that $T^{k}(z-\lam)  \in \m$ which is not necessarily true if $\m$ 
%is not invariant by $T$. 
%
%But then the notion is not so satisfying...
%\end{Quest}

By Lemma \ref{Lem:core-in-arc} 
and Lemma \ref{Lem:crucial}, note that the stratification 
by chiral symplectic cores is a refinement of  
the rank stratification.

\section{$n$-chiral Poisson cores in $n$-th jet schemes} \label{sec:chiral Poisson-arc}
Let $R$ be a Poisson algebra,
 $n \in \Z_{\ge 0}$. 
Recall that the derivations $a_{(k)}$, $k \ge 0$, of $\J R$ acts on $J_n R$ 
 by \eqref{eq:arc-jets} and \eqref{eq:10-Ara12}. 
We say that an ideal $I$ of $J_n R$ is {\em $n$-chiral Poisson} 
if $a_{(k)} I \subset I$ for any $a \in R$ and any $k=0,\ldots,n$.

%From the identities, 
%\begin{align*}
%(ab)_{(k)}=\sum_{i\ge 0} a_{(k-i)}b_{(i)}
%+\sum_{i<0} b_{(i)}a_{(k-i)}\\
%(T^ja)_{(n)}=\begin{cases}(-1)^j j!a_{(n-j)}&\text{for }0\le j\le n
%\\0&\text{for }j> n.\end{cases}
%\end{align*}
%
%\commentA{Where should runs $i$ in the identity??? 
%What's happen if $k-i<0$?? Are you sure it is not $\{0,\ldots,k\}$?}
%it is enough to require that 
%$a_{(k)} I  \subset I$ for any $a \in R$ and $k=0,\ldots,m$.  

\begin{Lem} \label{Lem:jet-chiral}
For a Poisson ideal $I$ of $R$,
$J_n I$ is $n$-chiral Poisson.
%Here recall that for $I=(f_1,\dots,f_r)$,
%$J_n I=(T^j f_i\mid i=1,\dots,r, j=0,\dots, n)\subset J_n R$.
\end{Lem}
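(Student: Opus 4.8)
The plan is to verify the defining condition of $n$-chiral Poisson directly on a generating set of $J_n I$, exploiting that each $a_{(k)}$ acts as a derivation of $J_n R$. Recall that $J_n I$ is by definition the ideal of $J_n R$ generated by the elements $T^j b$ with $b \in I$ and $0 \le j \le n$. Since \eqref{eq:PVA-4} tells us that $a_{(k)}$ is a derivation of $J_n R$, for any $a \in R$, $0 \le k \le n$, and any element $u = \sum_i c_i \, T^{j_i} b_i$ of $J_n I$ (with $c_i \in J_n R$, $b_i \in I$, $0 \le j_i \le n$) we have
\begin{align*}
a_{(k)} u = \sum_i \left( (a_{(k)} c_i) \cdot T^{j_i} b_i + c_i \cdot a_{(k)}(T^{j_i} b_i) \right).
\end{align*}
The first summands lie in $J_n I$ because $T^{j_i} b_i \in J_n I$ and $J_n I$ is an ideal, so the whole problem reduces to showing that $a_{(k)}(T^j b) \in J_n I$ for $b \in I$ and $0 \le j \le n$.

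The key computation is an application of \eqref{eq:10-Ara12} with $x = a$ and $y = b$, both viewed in $R$ (note $b \in I \subset R$): one gets $a_{(k)}(T^j b) = 0$ when $j < k$, and $a_{(k)}(T^j b) = \tfrac{j!}{(j-k)!}\, T^{j-k}\{a,b\}$ when $j \ge k$. In the first case the result is trivially in $J_n I$; in the second, since $I$ is a Poisson ideal we have $\{a,b\} \in I$, and the index satisfies $0 \le j - k \le n$, so $T^{j-k}\{a,b\}$ is a scalar multiple of one of the generators of $J_n I$. Hence $a_{(k)}(T^j b) \in J_n I$ in all cases, and combining with the reduction above yields $a_{(k)}(J_n I) \subset J_n I$ for all $a \in R$ and $0 \le k \le n$, which is exactly the assertion.

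This argument is essentially routine and parallels the proofs of Lemma \ref{Lem:chiral Poisson-def} and Lemma \ref{Lem:rank-strata}(2); no genuine obstacle arises. The only point demanding a little care is the index bookkeeping: one must use that the derivation $a_{(k)}$ is well-defined on the truncated ring $J_n R$ for $0 \le k \le n$ (which holds by the description \eqref{eq:arc-jets} together with \eqref{eq:10-Ara12}), and check that the shifted index $j - k$ stays within the range $\{0, \ldots, n\}$, so that $T^{j-k}\{a,b\}$ genuinely belongs to the prescribed generating set of $J_n I$ rather than escaping the truncation.
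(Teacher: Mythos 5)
Your proof is correct and is exactly the argument the paper intends: the paper's own proof is the one-line observation that the lemma ``follows from \eqref{eq:10-Ara12} since $I$ is a Poisson ideal of $R$,'' and your write-up simply makes explicit the routine reduction to generators $T^j b$ via the derivation property \eqref{eq:PVA-4}, together with the index check $0 \le j-k \le n$. No discrepancy with the paper's approach.
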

\begin{proof}
This follows from \eqref{eq:10-Ara12} since $I$ 
is a Poisson ideal of $R$. 
\end{proof}

If $I$ is an ideal of $J_n R$, we define the 
{\em $n$-chiral Poisson core} of $I$ to be the biggest 
$n$-chiral Poisson ideal contained in $I$. We denote it 
by $\P_{J_n R}(I)$. 

Set $X:=\Specm R$. 
We define a relation $\sim$ 
on $J_n X$ by:
\begin{align*}
x \sim y \iff \P_{J_n R}(\m_x) =\P_{J_n R}(\m_y), 
\end{align*}
where $\m_x$ denotes the maximal ideal of $J_n R$ 
corresponding to $x$. 

Clearly $\sim$ is an equivalence relation. 
We denote the equivalence class in $J_n X$ 
of $x$ by $\Core_{J_n X}(x)$, 
so that
$$J_n X= \bigsqcup_{x} \Core_{J_n X}(x).$$ 
We call the set $\Core_{J_n X}(x)$ the {\em $n$-chiral symplectic core of $x$ 
in $J_n X$}.

Similarly to the case of chiral Poisson cores in a vertex Poisson algebra, we 
obtain the following facts: 
\begin{Lem}\label{Lem:properties-m-cores}  \begin{enumerate} 
\item Let $I$ be an ideal of $J_n R$.
If $I$ is prime (resp.~radical) then $\P_{J_n R}(I)$ is prime 
(resp.~radical). 
\item Let $I$ be an ideal of $J_n R$.  
If $I$ is $n$-chiral Poisson, then so are the minimal 
prime ideals over $I$.  
\item Let $x \in J_n X$. 
%Then $\overline{\Core_{J_n X}(x)}=\V( \P_{J_n R}(\m_x))$. 
Then $\widetilde \V( \P_{J_n R}(\m_x))$ is the smallest 
$n$-chiral Poisson subscheme of $J_n X$ 
containing $\overline{\Core_{J_n X}(x)}$. 
\item Let $Y$ be a reduced Poisson subscheme of $X$, and let 
$x \in J_n Y$. 
Then $\Core_{J_n X}(x)=\Core_{J_n Y}(x)$ 
and $\pi_{n,0}^X(\Core_{J_n X}(x)) 
\subset \Core_{Y}(\pi_{n,0}^X(x)).$  
\end{enumerate}
\end{Lem}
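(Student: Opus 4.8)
The plan is to read Lemma~\ref{Lem:properties-m-cores} as the finite-level counterpart of the results already proved for the arc space and for general vertex Poisson algebras, the role of the full vertex Poisson structure being played by the finite family of derivations $x\mapsto a_{(k)}x$ with $a\in R$ and $0\le k\le n$ acting on $J_n R$ through \eqref{eq:10-Ara12}. Before the four items I would record two preliminaries, each proved exactly as its arc-space analogue. First, the explicit description
\begin{align*}
\P_{J_n R}(I)=\{x\in I\mid a^1_{(k_1)}\cdots a^m_{(k_m)}x\in I\ \text{for all}\ a^i\in R,\ m\ge 0,\ 0\le k_i\le n\},
\end{align*}
obtained verbatim as in Lemma~\ref{Lem:Poisson-core-description}(1). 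Second, that the radical of an $n$-chiral Poisson ideal of $J_n R$ is again $n$-chiral Poisson; this follows from \cite[3.3.2]{Dix96} just as Lemma~\ref{Lem:Poisson-ideal} does, since that argument applies to an arbitrary set of derivations of a commutative $\C$-algebra.

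For item (1), primeness is immediate from \cite[Lemma 3.3.2(ii)]{Dix96} applied to the above set of derivations, exactly as in Lemma~\ref{Lem:Poisson-core-description}(2); the radical case follows from the radical-stability preliminary by the argument of Lemma~\ref{Lem:Poisson-core-description}(3): if $I$ is radical then $\sqrt{\P_{J_n R}(I)}$ is $n$-chiral Poisson and contained in $\sqrt I=I$, forcing $\sqrt{\P_{J_n R}(I)}=\P_{J_n R}(I)$. Item (2) is the analogue of Corollary~\ref{Co:minimal-prime-vertex-ideals}, and here there is no hypothesis to make: $J_n R$ is Noetherian, so there are finitely many minimal primes $\p_1,\dots,\p_r$ over $I$, and if $I$ is $n$-chiral Poisson then $I\subset\P_{J_n R}(\p_i)\subset\p_i$ with $\P_{J_n R}(\p_i)$ prime by item (1), whence $\p_i=\P_{J_n R}(\p_i)$ is $n$-chiral Poisson by minimality.

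For item (3) I would repeat the proof of Lemma~\ref{Lem:defining-ideal}: since $\P_{J_n R}(\m_x)$ is $n$-chiral Poisson, prime and radical, $\widetilde{\V}(\P_{J_n R}(\m_x))$ is a reduced irreducible $n$-chiral Poisson subscheme; for $y\in\Core_{J_n X}(x)$ one has $\m_y\supset\P_{J_n R}(\m_y)=\P_{J_n R}(\m_x)$, so it contains $\overline{\Core_{J_n X}(x)}$, and the minimality follows from the maximality defining the core. The first assertion of item (4) mirrors Lemma~\ref{Lem:quotient}: writing $Y=\Specm(R/I_Y)$ with $I_Y$ a radical Poisson ideal, we have $J_n Y=\Specm(J_n R/J_n I_Y)$ with $J_n I_Y$ $n$-chiral Poisson by Lemma~\ref{Lem:jet-chiral}; since the $n$-chiral Poisson ideals of $J_n R/J_n I_Y$ correspond to the $n$-chiral Poisson ideals of $J_n R$ containing $J_n I_Y$, one gets $\P_{J_n R/J_n I_Y}(\m/J_n I_Y)=\P_{J_n R}(\m)/J_n I_Y$ for $\m\supset J_n I_Y$, and the identity of cores $\Core_{J_n X}(x)=\Core_{J_n Y}(x)$ follows exactly as before.

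The real work, and the step I expect to be the main obstacle, is the second assertion of item (4), the $n$-jet analogue of Lemma~\ref{Lem:core-in-arc}(1). Writing $\bar R=R/I_Y=\C[Y]$ and $J_n\bar R=J_n R/J_n I_Y$, and using that $\iota_n^X$ splits $\pi_{n,0}^X$ to see $J_n I_Y\cap R=I_Y$, the crux is the identity
\begin{align*}
\P_{\bar R}(\m_x\cap\bar R)=\P_{J_n\bar R}(\m_x)\cap\bar R,
\end{align*}
where $\m_x$ now denotes the maximal ideal of $J_n\bar R$ at $x\in J_n Y$. The inclusion $\supset$ is formal: intersecting an $n$-chiral Poisson ideal with $\bar R$ yields, via $a_{(0)}=\{a,-\}$, a Poisson ideal of $\bar R$ inside $\m_x\cap\bar R$, hence inside the Poisson core. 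For $\subset$ I would show that the extension $\P_{\bar R}(\m_x\cap\bar R)\cdot J_n\bar R$ is $n$-chiral Poisson: applying $a_{(k)}$ with $a\in\bar R$, $0\le k\le n$, to a product $bc$ with $b$ in the Poisson core and $c\in J_n\bar R$, the Leibniz rule \eqref{eq:PVA-4} together with \eqref{eq:10-Ara12} gives $a_{(k)}b=\delta_{k,0}\{a,b\}$, which again lies in the Poisson core, so $a_{(k)}(bc)$ stays in the extended ideal, which sits in $\m_x$ and hence in $\P_{J_n\bar R}(\m_x)$. The one point needing care — and where the jet case differs cosmetically from the arc case — is that $J_n Y$ may fail to be reduced; but since the cores are defined purely ideal-theoretically through maximal ideals, only the two containments above are required (no contraction or flatness statement for the extended ideal), so the argument of Lemma~\ref{Lem:core-in-arc}(1) transfers: for $y\in\Core_{J_n X}(x)=\Core_{J_n Y}(x)$ the displayed identity forces $\P_{\bar R}(\m_y\cap\bar R)=\P_{\bar R}(\m_x\cap\bar R)$, that is $\pi_{n,0}^X(y)\in\Core_{Y}(\pi_{n,0}^X(x))$.
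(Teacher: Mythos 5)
Your proposal is correct and takes essentially the same approach as the paper: the paper states Lemma~\ref{Lem:properties-m-cores} without proof, presenting it as the finite-level analogue of the results on chiral Poisson cores (``Similarly to the case of chiral Poisson cores in a vertex Poisson algebra, we obtain the following facts''), and your item-by-item transposition of Lemma~\ref{Lem:Poisson-core-description}, Corollary~\ref{Co:minimal-prime-vertex-ideals}, Lemma~\ref{Lem:defining-ideal}, Lemma~\ref{Lem:quotient} and Lemma~\ref{Lem:core-in-arc}\,(1) is precisely the intended argument. Your two observations --- that Noetherianity of $J_n R$ makes the finiteness hypothesis of Corollary~\ref{Co:minimal-prime-vertex-ideals} automatic, and that the definition of $n$-chiral Poisson ideals (stability only under $a_{(k)}$ for $a\in R$) removes the need for an analogue of Lemma~\ref{Lem:chiral Poisson-def} --- are exactly the simplifications the finite-level setting affords.
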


\begin{Lem} \label{Lem:rank_der} 
Let $Y$ be a reduced $n$-chiral Poisson subscheme of $J_n X$, and 
$y \in Y$. Let $\{x^1,\dots, x^r\}$ be a generating set for $R$, 
and consider the matrix $\mathscr{M}_n(y)$ as 
in Section \ref{sec:Rank}. 
Suppose that the matrix $\mathscr{M}_n(y)$
has maximal rank $(n+1)r$.  
Then the tangent space at $y$ of $Y$ has dimension 
at least $(n+1)r$. Moreover, $Y$ has dimension 
at least $(n+1)r$. 
\end{Lem}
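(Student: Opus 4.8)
The plan is to read the $n$-chiral Poisson condition geometrically: the hypothesis says that the derivations $x^i_{(p)}$ of $J_n R$, for $1\le i\le r$ and $0\le p\le n$, stabilize the defining ideal $I$ of $Y$, hence descend to global vector fields on $Y$, and the matrix $\mathscr{M}_n(y)$ records precisely the values of these vector fields at $y$ expressed in the ambient coordinates $T^q x^j$.

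First I would set $A=J_n R/I=\C[Y]$ and note that each $x^i_{(p)}$ with $0\le p\le n$, being a derivation of $J_n R$ stabilizing $I$, induces a derivation of $A$; composing with evaluation at $y$ yields a tangent vector $v_{i,p}\in T_y Y$ whose value on a coordinate $T^q x^j$ is $\bigl(x^i_{(p)}(T^q x^j)\bigr)(y)$, i.e.\ the $\bigl((i,p),(j,q)\bigr)$-entry of $\mathscr{M}_n(y)$. The vector $v_{i,p}$ genuinely lies in $T_y Y$, and not merely in $T_y(J_n X)$, because $x^i_{(p)} I\subset I$ and every element of $I$ vanishes at the point $y\in Y$. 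Thus the rows of $\mathscr{M}_n(y)$ are the coordinate vectors of the $v_{i,p}$, and the hypothesis $\on{rank}\mathscr{M}_n(y)=(n+1)r$ says exactly that the $(n+1)r$ tangent vectors $v_{i,p}$ are linearly independent. Hence $\dim T_y Y\ge (n+1)r$, which is the first assertion.

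The dimension statement requires a genericity argument, since a large tangent space only bounds $\dim Y$ from above at a single point. Here I would pass to an irreducible component $Z$ of $Y$ through $y$. By Lemma~\ref{Lem:properties-m-cores}(2) the minimal prime $\p$ of $A$ cutting out $Z$ is again $n$-chiral Poisson, so the $x^i_{(p)}$ restrict to global derivations of $B=\C[Z]$ and the $v_{i,p}$ still lie in $T_y Z$, where they remain linearly independent (a set of vectors independent in $T_y(J_n X)$ is independent in any subspace containing it). The matrix $\mathscr{M}_n$ read modulo $\p$ has entries in $B$ and rank $(n+1)r$ at $y$; by lower semicontinuity of rank its rank at the generic point of $Z$ is also at least $(n+1)r$. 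As we work in characteristic $0$, the function field $K=\on{Frac}(B)$ is separably generated over $\C$, so evaluation on the generators $T^q x^j$ embeds $\Der_\C(K)$ into $K^{(n+1)r}$ and $\dim_K\Der_\C(K)=\on{trdeg}_\C K=\dim Z$; since the images of the $v_{i,p}$ span a $K$-subspace of dimension equal to this generic rank, we get $\dim Z\ge (n+1)r$, and a fortiori $\dim Y\ge (n+1)r$.

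The hard part will be this second step, namely upgrading the pointwise independence at $y$ to independence at the generic point of a component. The clean route is the rank-semicontinuity argument together with the characteristic-zero identity $\dim_K\Der_\C(K)=\dim Z$; the one point that must be handled with care is the legitimacy of restricting to the component $Z$, which is exactly guaranteed by Lemma~\ref{Lem:properties-m-cores}(2), ensuring that the vector fields $x^i_{(p)}$ stay tangent to $Z$.
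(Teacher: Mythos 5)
Your proof is correct, and its first half is essentially the paper's argument: the rows of $\mathscr{M}_n(y)$ are the values on the coordinates $T^q x^j$ of the derivations $x^i_{(p)}$, which descend to $Y$ because the defining ideal is $n$-chiral Poisson, so maximal rank forces $(n+1)r$ linearly independent tangent vectors at $y$. Where you genuinely diverge is the passage from the tangent-space bound to the dimension bound. The paper stays with $Y$ itself: the locus where $\mathscr{M}_n$ has maximal rank is open and nonempty, hence meets the smooth locus of the reduced scheme $Y$ (dense open over $\C$); at a smooth point $y'$ of $Y$ in this intersection the first step gives $\dim_{y'} Y = \dim T_{y'}Y \ge (n+1)r$, and the dimension bound follows. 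You instead pass to the irreducible component $Z$ of $Y$ through $y$, invoking Lemma~\ref{Lem:properties-m-cores}(2) (which the paper proves but does not use in this proof) to guarantee the vector fields $x^i_{(p)}$ stay tangent to $Z$, and then work at the generic point: the determinant of $\mathscr{M}_n$ modulo the minimal prime is nonzero at $y$, hence nonzero in the domain $\C[Z]$, so the derivations extended to the function field $K$ of $Z$ span an $(n+1)r$-dimensional subspace of $\Der_\C(K)$, and the characteristic-zero identity $\dim_K \Der_\C(K) = \on{trdeg}_\C K = \dim Z$ concludes. Both routes rest on the same rank semicontinuity; the paper's is shorter and more geometric, while yours is more algebraic and delivers the slightly sharper local statement that the component through $y$ itself has dimension at least $(n+1)r$ (in the paper's argument the smooth point witnessing the bound could a priori lie on a different component of $Y$).
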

\begin{proof}%\commentA{Can you check again that proof?}
Since $Y$ is a reduced $n$-chiral Poisson subscheme of $J_n X$, 
$Y=\Spec J_n R/I$, where $I$ is a reduced $n$-chiral Poisson of $J_n R$. 
Moreover, $I \subset \m_y$ since $y\in Y$,  
where $\m_y$ denotes the maximal ideal of $J_n R$ 
corresponding to $y$. 

The hypothesis implies that the derivations $x^{i}_{(k)}$, $i=1,\ldots,r$, 
$k=0,\ldots,n$, are linearly independent in Der$(\mc{O}_{J_nX,y},\C)$. 
Indeed, if for some $\lam^{i}_{(k)}$, $i=1,\ldots,r$, 
$k=0,\ldots,n$, 
$$\sum_{i=1}^r \sum_{k=0}^n \lam^{i}_{(k)}x^{i}_{(k)}=0 
\quad\text{ in }\quad {\rm Der}(\mc{O}_{J_nX,y},\C),$$ 
then 
$$\sum_{i=1}^r \sum_{k=0}^n \lam^{i}_{(k)} (x^{i}_{(k)}(T^{l}x^{j})
+\m_y) =0\quad\text{ for all }\;  j=1,\ldots,r, \, l=0,\ldots,r,$$
and so $\lam^{i}_{(k)}=0$ for all $i=1,\ldots,r$ and $k=0,\ldots,n$
since the matrix $\mathscr{M}_n(y)$
has rank $(n+1)r$. 

Since $I$ is $n$-chiral Poisson and is contained in $\m_y$, 
we get that for all $i=1,\ldots,r$ and $k=0,\ldots,n$, $x^{i}_{(k)} (I) \subset I \subset \m_y$. 
Hence, the derivations $x^{i}_{(k)}$, $i=1,\ldots,r$, 
$k=0,\ldots,n$ are also linearly independent in Der$(\mc{O}_{Y,y},\C)$ 
since $\mc{O}_{Y,y} = \mc{O}_{J_nX,y}/(\mc{O}_{J_nX,y}\cap I)$. 
This shows that the tangent space at $y$ of $Y$ has dimension 
at least $(n+1)r$. 
 
The set of points $y \in Y$ such that matrix $\mathscr{M}_n(y)$
has maximal rank $(n+1)r$ is a nonempty open subset of $Y$. 
Hence it meets the set of smooth points of $Y$. 
By the first step, we deduce that for some smooth point $y \in Y$, 
the tangent space $T_y Y$ has dimension at least $(n+1)r$. 
Therefore $Y$ has dimension at least $(n+1)r$.
\end{proof}

Recall that $\iota_n$ (resp.~$\iota_\infty$) 
denotes the canonical 
embedding from $X$ to $J_n X$ (resp.~$\J X$). 
For $x$ in $X$, we simply denote by $x_n$ (resp.~$x_\infty$) 
 the element $\iota_n(x)$ (resp.~$\iota_\infty(x)$).

\begin{Pro} \label{Pro:equality-cores} 
%\red{Assume that $\overline{\Core_{X}(x)}$ is Poisson, that is, 
%$\overline{\Core_{X}(x)}=\V(\P_{R}(\m_x))$. }\commentA{Indeed, 
%this assumption is used in the Proof}
Let $x \in X$,  
%and $\m \in (\pi^Y_{m,0})^{-1}(\m_0)$, 
and set $Y:=\overline{\Core_{X}(x)}$. 
%Assume that $\Core_R(\m_0)$ 
%coincides with the symplectic leaf $\mathscr{L}(\m_0)$ through $\m_0$  
%in $X$. %and that $\Core_R(\m)$ is locally closed. 
\begin{enumerate}
\item For any  $n \in \Z_{\ge 0}$, 
$$\overline{(\pi_{n,0}^{Y})^{-1}(Y_{reg})} = 
\V(\P_{J_n R}(\m_{x_n})).$$  
%\overline{\Core_{J_n X}(x_n)}.$$
In particular, if $J_n Y$ is irreducible, then 
$$J_n Y= \V(\P_{J_n R}(\m_{x_n})).$$ 
%\overline{\Core_{J_n X}(x_n)}.$$
\item We have: 
$$\J Y = \V(\P_{\J R}(\m_{x_\infty}))$$
%\overline{\Core_{\J X}(x_\infty)}.$$
\end{enumerate}
\end{Pro}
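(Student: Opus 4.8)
The plan is to prove part (1) by a two-sided inclusion of closed subsets of $J_nX$ --- one direction by maximality of the chiral Poisson core, the other by a dimension estimate --- and then to deduce part (2) by comparing the two sides level by level and invoking Kolchin's theorem. First I would record the geometry of $Y$. By \cite{Brown-Gordon} (the Poisson analogue of Lemma~\ref{Lem:defining-ideal}), $Y=\V(\P_R(\m_x))$ is an irreducible, reduced Poisson subscheme of $X$ whose dense open subset $Y^0:=\Core_X(x)$ lies in the constant-rank stratum $\bar\LL^0_j$ with $j={\rm rk}\,x$ (Lemma~\ref{Lem:core-in-arc} and \cite[Proposition 3.6]{Brown-Gordon}), and $\dim Y={\rm rk}\,x=:j$, a symplectic core closure having dimension equal to the rank of the Poisson bivector along it. Write $\IU:=\P_R(\m_x)$, so the defining ideal of $J_nY$ in $J_nR$ is $J_n\IU$, which is $n$-chiral Poisson by Lemma~\ref{Lem:jet-chiral}. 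By \eqref{eq:arc-irred} the main component $\overline{(\pi_{n,0}^Y)^{-1}(Y_{reg})}$ is an irreducible component of $J_nY$; let $Q$ be its (prime, radical) defining ideal. As a minimal prime over the $n$-chiral Poisson ideal $J_n\IU$, the ideal $Q$ is again $n$-chiral Poisson by Lemma~\ref{Lem:properties-m-cores}(2). Moreover $\iota_n(Y_{reg})\subset(\pi_{n,0}^Y)^{-1}(Y_{reg})$ and $\iota_n$ is a closed embedding, so $x_n=\iota_n(x)\in\iota_n(Y)\subset\overline{(\pi_{n,0}^Y)^{-1}(Y_{reg})}=\V(Q)$, i.e.\ $Q\subset\m_{x_n}$.

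Set $P:=\P_{J_nR}(\m_{x_n})$. The previous paragraph shows $Q$ is an $n$-chiral Poisson ideal contained in $\m_{x_n}$, hence $Q\subset P$ by maximality of the $n$-chiral Poisson core, so $\V(P)\subset\V(Q)=\overline{(\pi_{n,0}^Y)^{-1}(Y_{reg})}$. For the reverse inclusion I would run a dimension count. Both $\V(P)$ and $\V(Q)$ are irreducible ($P$ is prime by Lemma~\ref{Lem:properties-m-cores}(1)), and $\dim\V(Q)=(n+1)\dim Y=(n+1)j$ since $(\pi_{n,0}^Y)^{-1}(Y_{reg})\cong J_n(Y_{reg})$ is smooth of that dimension. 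On the other hand $x_n\in\V(P)$ with $\on{rank}\mathscr M_0(x)={\rm rk}\,x=j$, so by Lemma~\ref{Lem:crucial} the set $\{y\in\V(P):\on{rank}\mathscr M_n(y)\ge (n+1)j\}$ is a nonempty open subset of $\V(P)$; it therefore meets the smooth locus, and at such a smooth point $y$ the argument of Lemma~\ref{Lem:rank_der} --- the $n$-chiral Poisson property forces at least $\on{rank}\mathscr M_n(y)$ of the derivations $x^i_{(k)}$ to be linearly independent in $\on{Der}(\mc O_{\V(P),y},\C)$ --- gives $\dim\V(P)=\dim T_y\V(P)\ge(n+1)j$. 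Since $\V(P)\subset\V(Q)$ with $\V(Q)$ irreducible of dimension $(n+1)j$, I conclude $\V(P)=\V(Q)$, which is part (1); the ``in particular'' follows at once, since irreducibility of $J_nY$ forces it to coincide with its main component.

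For part (2), recall $\J Y=\overline{\pi_{\infty,0}^{-1}(Y_{reg})}$ is irreducible by Theorem~\ref{Th:Kolchin}, with defining ideal $\sqrt{J_\infty\IU}$. By \eqref{eq:10-Ara12} and Lemma~\ref{Lem:chiral Poisson-def}, $J_\infty\IU$ is a chiral Poisson ideal, and it is contained in $\m_{x_\infty}=J_\infty\m_x$; maximality of the chiral Poisson core then gives $J_\infty\IU\subset\P_{\J R}(\m_{x_\infty})$, whence $\V(\P_{\J R}(\m_{x_\infty}))\subset\J Y$. For the opposite inclusion I would descend to finite level. For each $n$ the derivations $a_{(k)}$ with $a\in R$, $0\le k\le n$, preserve $J_nR$ by \eqref{eq:10-Ara12}, so $\P_{\J R}(\m_{x_\infty})\cap J_nR$ is an $n$-chiral Poisson ideal contained in $\m_{x_\infty}\cap J_nR=\m_{x_n}$, hence $\P_{\J R}(\m_{x_\infty})\cap J_nR\subset\P_{J_nR}(\m_{x_n})=P$. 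Consequently $\V(P)\subset\V(\P_{\J R}(\m_{x_\infty})\cap J_nR)$, and by part (1) applied at level $n$, $\pi_{\infty,n}(\J Y)\subset\V(P)\subset\V(\P_{\J R}(\m_{x_\infty})\cap J_nR)$. Since $\gamma\in\V(\P_{\J R}(\m_{x_\infty}))$ if and only if $\pi_{\infty,n}(\gamma)\in\V(\P_{\J R}(\m_{x_\infty})\cap J_nR)$ for all $n$, this yields $\J Y\subset\V(\P_{\J R}(\m_{x_\infty}))$, and combining the two inclusions proves the stated equality.

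The two formal inclusions are routine once the relevant ideals are matched with cores and truncation maps; the real content is the dimension lower bound $\dim\V(\P_{J_nR}(\m_{x_n}))\ge(n+1)\,{\rm rk}\,x$. This rests on two inputs that must be made to agree: the general-rank form of Lemma~\ref{Lem:rank_der}, producing enough independent Hamiltonian tangent directions at a smooth point of $\V(P)$ from the $n$-chiral Poisson property, and the Brown--Gordon fact that $\dim Y={\rm rk}\,x$. Getting these two numbers to coincide is the crux of the proof.
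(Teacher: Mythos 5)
Your proof is correct and follows the same skeleton as the paper's: one inclusion by maximality of the $n$-chiral Poisson core, the reverse inclusion by a dimension estimate combining Lemma~\ref{Lem:crucial} with the tangent-space argument of Lemma~\ref{Lem:rank_der}, and part (2) deduced from part (1) together with Kolchin's theorem. Within that skeleton you make three choices that differ from the paper and are worth recording. First, to get $\V(\P_{J_nR}(\m_{x_n}))\subset\overline{(\pi_{n,0}^{Y})^{-1}(Y_{reg})}$ the paper shows the component contains $\overline{\Core_{J_nX}(x_n)}$ and invokes the ``smallest $n$-chiral Poisson subscheme'' property (Lemma~\ref{Lem:properties-m-cores}(3)--(4)); you instead observe that the component's prime ideal $Q$ is a minimal prime over $J_n\P_R(\m_x)$, hence $n$-chiral Poisson by Lemma~\ref{Lem:properties-m-cores}(2), and sits inside $\m_{x_n}$, so $Q\subset\P_{J_nR}(\m_{x_n})$ directly by maximality --- shorter and equally valid. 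Second, for the lower bound $\dim\V(\P_{J_nR}(\m_{x_n}))\ge(n+1)\,{\rm rk}\,x$ the paper passes to $A=\C[Y]$ and applies Lemma~\ref{Lem:rank_der} as literally stated (maximal rank), which leads it to assert that $A$ has $\dim Y$ generators; you stay with the ambient generators of $R$ and use the general-rank form of the same argument (the rank of $\mathscr{M}_n(y)$ bounds below the number of independent Hamiltonian tangent directions at $y$), which the proof of Lemma~\ref{Lem:rank_der} yields verbatim and which avoids that awkward point. Third, your part (2) makes explicit the inductive-limit argument ($\J R=\bigcup_n J_nR$, $\m_{x_\infty}\cap J_nR=\m_{x_n}$, and $\P_{\J R}(\m_{x_\infty})\cap J_nR\subset\P_{J_nR}(\m_{x_n})$) that the paper compresses into ``follows from part (1) and Kolchin's theorem''. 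Finally, note that both proofs rest on the same Brown--Gordon inputs --- that the defining ideal of $Y=\overline{\Core_X(x)}$ is $\P_R(\m_x)$ (so $Y$ is an irreducible Poisson subvariety) and that $\dim Y={\rm rk}\,x$ --- facts established when cores coincide with leaves, i.e.\ in the algebraic setting where the proposition is subsequently applied; your appeal to \cite[Proposition 3.6]{Brown-Gordon} for these is at the same level of rigor as the paper's own citation.
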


\begin{proof}
(1) By Lemma \ref{Lem:properties-m-cores} (4), 
$\Core_{J_n X}(x_n)=\Core_{J_n Y}(x_n)$, 
and 
$$\Core_{J_n X}(x_n) \subset (\pi_{n,0}^{Y})^{-1}
(\Core_{X}(x)) \subset (\pi_{n,0}^{Y})^{-1}
(Y_{reg})$$ since by \cite[Lemma 3.3 (2)]{Brown-Gordon}, 
$\Core_{X}(x)$ is smooth in its closure. 
Hence
$$\overline{\Core_{J_n X}(x_n)} \subset 
\overline{(\pi_{n,0}^{Y})^{-1}
(Y_{reg})} \subset J_n Y.$$

Since $\overline{(\pi_{n,0}^{Y})^{-1}
(Y_{reg})}$ is an irreducible component of 
$J_n Y$  (cf.~Section \ref{sec:arc}) and since $J_n Y$ 
is $n$-chiral Poisson, $\overline{(\pi_{n,0}^{Y})^{-1}
(Y_{reg})}$ is an $n$-chiral Poisson subscheme 
of $J_n X$. 
Hence by Lemma~\ref{Lem:defining-ideal}, 
$$\overline{\Core_{J_n X}(x_n)} \subset \V(\P_{J_n A}(\m_{x_n})) 
\subset \overline{(\pi_{n,0}^{Y})^{-1}(Y_{reg})},$$ 
where $A = \C[Y]$. 
Let $x^{1},\ldots,x^{r}$ be generators of $A$,  
where $r=\dim Y$. 
By \cite[Proposition 3.6]{Brown-Gordon} 
(proof of (2)), the matrix $\mathscr{M}_0$ 
has maximal rank $r$ at $\m_x$.  
Hence, by Lemma \ref{Lem:crucial}, $\mathscr{M}_n$ 
has rank $(n+1)r$ at $\m_{x_n}$. Since 
$\P_{J_n A}(\m_{x_n})$ is an $n$-chiral Poisson 
ideal of $J_n A$, 
%defining the reduced 
%$n$-chiral Poisson subscheme $\overline{\Core_{J_n Y}(x_n)}$ 
%of $J_n Y$, 
it results from Lemma \ref{Lem:rank_der} 
that $\V(\P_{J_n A}(\m_{x_n})))$ 
%$\overline{\Core_{J_n Y}(x_n)}$ 
has dimension at least $(n+1)r$. 
But 
$$\dim \overline{(\pi_{n,0}^{Y})^{-1}
(Y_{reg})}=(n+1)r.$$
Both $\V(\P_{J_n A}(\m_{x_n})))$ 
%$\overline{\Core_{J_n Y}(x_n)}$ 
and $\overline{(\pi_{n,0}^{Y})^{-1}
(Y_{reg})}$ are closed and irreducible, whence the first assertion of (1). 
Indeed note that $\V(\P_{J_n A}(\m_{x_n}))) 
=\V(\P_{J_n R}(\m_{x_n})))$ 
since $\Core_{J_n X}(x_n)=\Core_{J_n Y}(x_n)$. 

The second assertion follows from the fact that 
$\overline{(\pi_{n,0}^{Y})^{-1}(Y_{reg})}$ is an irreducible 
component of $J_n Y$. 

Part (2) follows from part (1) and Kolchin's Theorem~\ref{Th:Kolchin}. 
\end{proof}

\section{Partial stratification by chiral symplectic leaves} \label{sec:chiral-symplectic-leaves}

Recall that there is a well-defined stratification of 
$X$ by symplectic leaves \cite{Brown-Gordon}. 

We assume in this section that 
the Poisson bracket on $X={\rm Specm}\,R$ is 
{\em algebraic}, that is, the symplectic leaves in $X$ 
are all locally closed. 
Then  \cite[Proposition 
3.6]{Brown-Gordon} the symplectic leaves coincide
with the symplectic cores of $X$,  
and the defining ideal of the symplectic core closure 
of a point $x\in X$ is $\P_R(\m_x)$. 

Let $x_0 \in X$, and  $\mathscr{L}_X(x_0)$  the symplectic 
leaf through $x_0$ in $X$. 
Set 
$$Y =\overline{\mathscr{L}_X(x_0)} \quad \text{ and } \quad A=\C[Y].$$ 
Note that $(\pi_{n,0}^Y)^{-1}(\mathscr{L}_X(x_0))$ 
is open in $J_n Y$ since 
$\mathscr{L}_X(x_0)$ is open in its closure $Y$. 
Moreover,  $(\pi_{n,0}^Y)^{-1}(\mathscr{L}_X(x_0)) 
=J_n \mathscr{L}_X(x_0)$ by \cite[Lemma 2.3]{EinMus}. In particular, 
one can equip
$(\pi_{n,0}^Y)^{-1}(\mathscr{L}_X(x_0))$ with the structure of a smooth analytic 
variety. 
%\commentT{Why is it so?}
%\commentA{Jet of a smooth variety is smooth 
%and so it is smooth analytic for the complex topology: do you agree?} 

%We define a stratification of the set 
%$$(J_n X)^\circ := \bigsqcup_{x \in \mathscr{X}} J_n \mathscr{L}_X(x),$$
%where $\mathscr{X}$ is an index set such that 
%$$X=\bigsqcup_{x \in \mathscr{X}} \mathscr{L}_X(x)$$
%is the stratification of $X$ in symplectic leaves, as follows. 
Let $x  \in J_n \mathscr{L}_X(x_0)$. 
We 
denote by $\mathscr{L}_{J_n X}(x)$ the set 
of all $y \in J_n \mathscr{L}_X(x_0)$ 
which can be reach from $x$ by traveling along 
the integral flows of vector fields 
$a_{(k)}$, $a \in A$, $k=0,\ldots,n$.  

We call $\mathscr{L}_{J_n X}(x)$ the {\em $n$-chiral 
symplectic leaf of $x$ in $J_n X$}. 
Note that we defined $n$-chiral 
symplectic leaves %are well-defined 
only for elements 
in $\bigcup_{x' \in X} J_n \mathscr{L}_X(x')$ 
which is a priori different from $J_n X$. 

%{\color{red}So far, I don't know how to define 
%a stratification by chiral leaves on the whole $J_n X$, but this 
%definition should be enough to get our result.}

\begin{Lem}  \label{lem:leaf}
Let $x  \in J_n \mathscr{L}_X(x_0)$. Then the defining ideal 
of the Zariski closure of $\mathscr{L}_{J_n X}(x)$ 
is $\P_{J_n R}(\m_{x})$. 
\end{Lem}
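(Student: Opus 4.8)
The plan is to prove the two inclusions between the radical ideal $I:=I(\overline{\mathscr{L}_{J_n X}(x)})$ and the $n$-chiral Poisson core $\P_{J_n R}(\m_{x})$, using the standard dictionary between the local analytic flow of a polynomial vector field and the action of the corresponding derivation on ideals. First I would record a compatibility remark: since $A=\C[Y]=R/I(Y)$, every $\bar a\in A$ is the restriction of some $a\in R$, and the Hamiltonian field $\bar a_{(k)}$ on $J_n Y$ is the restriction of $a_{(k)}$. Thus the vector fields generating $\mathscr{L}_{J_n X}(x)$ are exactly the restrictions to $J_n\mathscr{L}_X(x_0)$ of the derivations $a_{(k)}$, $a\in R$, $k=0,\dots,n$, that occur in the definition of $n$-chiral Poisson ideals; so the leaf picture (via $A$) and the $n$-chiral Poisson picture (via $R$) take place through the same derivations on the smooth analytic locus $J_n\mathscr{L}_X(x_0)$.

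For the inclusion $I\subseteq\P_{J_n R}(\m_{x})$, I would show that $I$ is an $n$-chiral Poisson ideal contained in $\m_{x}$ and then invoke the maximality in the definition of the core. That $I\subseteq\m_{x}$ is immediate since $x\in\mathscr{L}_{J_n X}(x)\subseteq\V(I)$. For the $n$-chiral Poisson property, fix $f\in I$, $a\in R$ and $k\in\{0,\dots,n\}$. For every $p\in\mathscr{L}_{J_n X}(x)$ the flow curve $\exp(t\,a_{(k)})p$ remains in the orbit by definition, so $f(\exp(t\,a_{(k)})p)$ vanishes identically for small $t$; differentiating at $t=0$ gives $(a_{(k)}f)(p)=0$. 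Since $\mathscr{L}_{J_n X}(x)$ is Zariski dense in $\V(I)$ and $I$ is radical, $a_{(k)}f\in I$. Hence $I$ is $n$-chiral Poisson and $I\subseteq\P_{J_n R}(\m_{x})$.

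For the reverse inclusion I would work with $W:=\V(\P_{J_n R}(\m_{x}))$, which is reduced, irreducible and $n$-chiral Poisson by Lemma~\ref{Lem:properties-m-cores} and contains $x$. Because $Y$ is the symplectic core closure of $x_0$, its defining ideal $\P_R(\m_{x_0})=I(Y)$ is Poisson, so by Lemma~\ref{Lem:jet-chiral} the jet ideal $J_n\P_R(\m_{x_0})$ is $n$-chiral Poisson; as $x\in J_n Y=\V(J_n\P_R(\m_{x_0}))$ we get $J_n\P_R(\m_{x_0})\subseteq\m_{x}$, hence $J_n\P_R(\m_{x_0})\subseteq\P_{J_n R}(\m_{x})$ and therefore $W\subseteq J_n Y$. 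Since $\P_{J_n R}(\m_{x})$ is stable under every $a_{(k)}$, all $t$-derivatives of $f\in\P_{J_n R}(\m_{x})$ along a flow line $\exp(t\,a_{(k)})p$ with $p\in W$ lie again in $\P_{J_n R}(\m_{x})$, hence vanish at $p$; by analyticity the flow keeps $W\cap J_n\mathscr{L}_X(x_0)$ inside $W$. As $x\in W$ and $\mathscr{L}_{J_n X}(x)$ is generated from $x$ by these flows, $\mathscr{L}_{J_n X}(x)\subseteq W$, so $\overline{\mathscr{L}_{J_n X}(x)}\subseteq W$ and $\P_{J_n R}(\m_{x})=I(W)\subseteq I$. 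Combining the two inclusions yields $I=\P_{J_n R}(\m_{x})$.

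The main technical point is the bridge between the analytic flows used to define $\mathscr{L}_{J_n X}(x)$ and the purely algebraic invariance of ideals under the derivations $a_{(k)}$: one must justify carefully that a Zariski-closed set is preserved by $\exp(t\,a_{(k)})$ precisely when its radical ideal is stable under $a_{(k)}$, relying on the existence of local flows on the smooth analytic variety $J_n\mathscr{L}_X(x_0)$ and on the density of the orbit in its Zariski closure. A secondary, more routine point, already addressed above, is the bookkeeping ensuring that the $A$-fields defining the leaf and the $R$-fields defining $n$-chiral Poisson ideals agree on $J_n Y$, together with the containment $W\subseteq J_n Y$, so that both descriptions live on the same smooth locus.
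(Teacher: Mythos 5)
Your proposal is correct, and its overall skeleton matches the paper's: both prove the statement as two inclusions, (i) the vanishing ideal $\mathscr{K}_x$ of $\overline{\mathscr{L}_{J_n X}(x)}$ is $n$-chiral Poisson and contained in $\m_x$, hence lies in the core by maximality, and (ii) the leaf is contained in $\V(\P_{J_n R}(\m_x))$, so the core lies in $\mathscr{K}_x$. For (i) your argument --- differentiate $f\circ\sigma$ at $t=0$ at each point of the leaf and use that the leaf is dense in its Zariski closure with radical ideal --- is exactly the paper's second step.

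Where you genuinely diverge is in (ii). The paper, following Brown--Gordon, observes that $a_{(k)}$ descends to a derivation $\widetilde{a}_{(k)}$ of $J_n A/\P_{J_n A}(\m_x)$, introduces an integral curve $\widetilde\sigma_x$ of $\widetilde{a}_{(k)}$ inside $\widetilde{J_n Y}=\V(\P_{J_n A}(\m_x))$, and concludes from uniqueness of solutions of ODEs that $\widetilde\sigma_x=\sigma_x$ near $0$, so the flow issued from $x$ stays in $\V(\P)$. You instead expand $f\circ\sigma_p$ as a power series: for $f\in\P_{J_n R}(\m_x)$ and $p\in W=\V(\P_{J_n R}(\m_x))$, every Taylor coefficient $(a_{(k)}^m f)(p)$ vanishes because the core is stable under $a_{(k)}$, so $f\circ\sigma_p\equiv 0$ by analyticity and the flow preserves $W\cap J_n\mathscr{L}_X(x_0)$. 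This buys something concrete: the paper's version tacitly presupposes the \emph{existence} of an integral curve of $\widetilde{a}_{(k)}$ on the possibly singular variety $\widetilde{J_n Y}$, whereas your version only ever integrates vector fields on the smooth locus $J_n\mathscr{L}_X(x_0)$ and uses purely algebraic stability of the core under the derivations; it is the more robust variant of the same idea. Two smaller, harmless differences: you keep the core relative to $R$ throughout, handling the change of rings by your compatibility remark together with the containment $W\subseteq J_n Y$ (which you correctly extract from Lemma~\ref{Lem:jet-chiral} applied to $I(Y)=\P_R(\m_{x_0})$ and maximality of the core), whereas the paper first replaces $\P_{J_n R}(\m_x)$ by $\P_{J_n A}(\m_x)$ via Lemma~\ref{Lem:properties-m-cores}~(4); and your appeal to algebraicity of the bracket (leaves $=$ cores, so $I(Y)$ is the Poisson ideal $\P_R(\m_{x_0})$) is exactly the standing hypothesis of this section, so nothing is missing there.
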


\begin{proof} %\commentA{Could you check again the proof!}
First of all, by construction of $\mathscr{L}_{J_n X}(x)$, 
we have $\mathscr{L}_{J_n X}(x) \subset J_n Y$. 
%where $Y=\overline{\mathscr{L}_X(\pi_{n,0}^X(x))}$. 
So by Lemma \ref{Lem:properties-m-cores} (4), 
$\P_{J_n R}(\m_{x})=\P_{J_n A}(\m_{x})$. %, with $A=\C[Y]$.
%By definition of the chiral symplectic leaves, 
%$\mathscr{L}_{J_n X}(x) \subset (\pi_{n,0}^X)^{-1} (\mathscr{L}^{i})$ 
%for some~$i$. We can assume that $i=1$. 

We now follow the ideas of the proof of \cite[Lemma 3.5]{Brown-Gordon}. 
Let $\mathscr{K}_{x}$ be the defining ideal of 
 $\overline{\mathscr{L}_{J_n X}(x)}$. 
 
We first show that $\P_{J_n A}(\m_{x}) \subset  \mathscr{K}_{x}$. 
Let $$\widetilde{J_n Y} := \Specm \widetilde{J_n A}, 
\quad  \text{ with }\quad 
\widetilde{J_n A} = J_n A/\P_{J_n A}(\m_{x}),$$ 
and denote by $\widetilde a$ the image of $a \in J_n A$ 
in $\widetilde{J_n A}$. 
For $r >0$, $B(r)$ denotes the open complex analytic disc of radius $r$. 
Let $a \in A$ and $k \in \{0,\ldots,n\}$.
Since $a_{(k)}\P_{J_n A}(\m_{x})\subset \P_{J_n A}(\m_{x})$,
$a_{(k)}$ defines a  derivation on $\widetilde{J_n A} $,
which we denote by  $\widetilde{a}_{(k)}$.
Consider $\sigma_{x} \colon B(r) \to J_n Y$ 
and  $\widetilde \sigma_{x} \colon B(r) \to \widetilde{J_n Y}$ be integral 
curves of the vector fields  $a_{(k)}$ and $\widetilde{a}_{(k)}$ respectively, 
with $\sigma_{x}(0) = x$, $\widetilde\sigma_{x}(0) =\widetilde x$. 
%\commentA{I wonder whether one can do this: $\widetilde{a}_{(k)}$ 
%is still a derivation. Is it enough to guarantee that we have an integral curve, 
%and the unicity?}

Viewing $\widetilde{J_n Y}$ as a subset of $J_n Y$, let us show that 
$\widetilde\sigma_{x}=\sigma_{x}$ in a neighborhood of~$0$. 
Let $f \in J_n A$. By definition of an integral curve, 
\begin{align} \label{eq:leaf} 
\dfrac{\d}{\d z} (f \circ \sigma_{x}) & = a_{(k)} (f) \circ \sigma_x, &\\ \label{eq:leaf2}
\dfrac{\d}{\d z} (\widetilde f \circ \widetilde \sigma_{x}) & = \widetilde{a}_{(k)} 
( \widetilde f) \circ \widetilde\sigma_x.
\end{align}
But the left hand side of \eqref{eq:leaf2} is  $\dfrac{\d}{\d z} ( f \circ \widetilde\sigma_{x})$, and the right hand side is 
${a_{(k)}} 
( f) \circ \widetilde\sigma_x$ because $\P_{J_n A}(\m_{x})$ is a chiral 
Poisson ideal of $J_n A$. 
Hence by \eqref{eq:leaf}, we conclude by the uniqueness of flows that 
$\widetilde \sigma_{x}=\sigma_{x}$ in a neighborhood of $0$. 
Since the $n$-chiral symplectic leaf $\mathscr{L}_{J_n X}(x)$ is 
by definition
obtained by traveling along integral curves to fields $a_{(k)}$, 
the chiral symplectic leaf $\mathscr{L}_{J_n X}(x)$, and so its closure, 
is contained in $\V(\P_{J_n A}(\m_{x}))$, 
whence 
$$\P_{J_n A}(\m_{x}) \subset  \mathscr{K}_{x} \subset \m_x.$$
%In the above, the only property of $\P_{J_n A}(\m_{x})$ we have uses  
%it that it is a chiral Poisson ideal of $J_n A$. So the above shows that 
%Thus, by the definition of the $n$-chiral Poisson core, 
%$\P_{J_n A}(\m_{x_n})$ is the biggest $n$-chiral Poisson ideal of $J_n A$
%contained in $\mathscr{K}_{x}$.
 
To show the equality 
$\P_{J_n A}(\m_{x}) =  \mathscr{K}_{x}$, it remains to prove that 
$\mathscr{K}_{x}$ is an $n$-chiral Poisson ideal of $J_n A$. 

Let $f\in  \mathscr{K}_{x}$, $a \in A$ and $k \in \{0,\ldots,n\}$. 
Let $\sigma_{x} \colon B(r) \to J_n Y$ be an integral curve 
to $a_{(k)}$, with $\sigma_{x}(0)=x$. Then, by definition of an integral curve, 
\eqref{eq:leaf} holds.
On a complex analytic neighborhood of $x$, 
$f \circ \sigma_x =0$ since the image of $\sigma_x$ is in $\mathscr{L}_{J_n X}(x)$. 
Hence 
$$0=\dfrac{\d}{\d z} (f \circ \sigma_x)(0) =(a_{(k)} (f) \circ \sigma_x) (0)= 
a_{(k)} (f) (x).$$
As a consequence, $a_{(k)} (\mathscr{K}_x) \subset \m_{x_n}$ 
for all $a \in A$ and all $k \in \{0,\ldots,n\}$. 
Repeating this argument with $x$ replaced by each of the members
of $\mathscr{L}_{J_n X}(x)$, we conclude that 
$a_{(k)} (\mathscr{K}_x) \subset \mathscr{K}_x$ 
for all $a \in A$ and all $k \in \{0,\ldots,n\}$, that is, 
that $\mathscr{K}_x$ is $n$-chiral Poisson. 
\end{proof}

\begin{Co} \label{Co:leaf}
%Let $x\in (J_n X)^\circ$. 
Let $x  \in J_n \mathscr{L}_X(x_0)$. 
Then the defining ideal 
of the closure of $\Core_{J_n X}(x)$ 
is $\P_{J_n R}(\m_{x})$. 
\end{Co}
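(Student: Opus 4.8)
The plan is to deduce Corollary~\ref{Co:leaf} by combining Lemma~\ref{lem:leaf} with the identification of $n$-chiral symplectic cores carried out in the preceding material. The point is that the statement of the corollary replaces the Zariski closure of the $n$-chiral symplectic \emph{leaf} $\mathscr{L}_{J_n X}(x)$ by the closure of the $n$-chiral symplectic \emph{core} $\Core_{J_n X}(x)$, while asserting that both have the same defining ideal, namely $\P_{J_n R}(\m_x)$. So the essential content is that passing from the leaf to the core does not change the closure's defining ideal.

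First I would invoke Lemma~\ref{lem:leaf}, which gives directly that the defining ideal of $\overline{\mathscr{L}_{J_n X}(x)}$ equals $\P_{J_n R}(\m_x)$. Next, by Lemma~\ref{Lem:properties-m-cores}~(3), the scheme $\widetilde\V(\P_{J_n R}(\m_x))$ is the smallest $n$-chiral Poisson subscheme of $J_n X$ containing $\overline{\Core_{J_n X}(x)}$; in particular $\overline{\Core_{J_n X}(x)} \subset \V(\P_{J_n R}(\m_x))$, so the defining ideal $\mathscr{J}_x$ of $\overline{\Core_{J_n X}(x)}$ satisfies $\P_{J_n R}(\m_x) \subset \mathscr{J}_x$. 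For the reverse containment I would establish the chain of inclusions $\mathscr{L}_{J_n X}(x) \subset \Core_{J_n X}(x)$: every point reachable from $x$ along integral flows of the vector fields $a_{(k)}$, $a \in A$, $k=0,\dots,n$, has the same $n$-chiral Poisson core as $x$, since by Lemma~\ref{lem:leaf} these flows preserve the vanishing locus of $\P_{J_n R}(\m_x)$ and, by the defining property of $\P_{J_n R}$, the core is constant along them. Taking closures gives $\overline{\mathscr{L}_{J_n X}(x)} \subset \overline{\Core_{J_n X}(x)}$, hence $\mathscr{J}_x \subset \P_{J_n R}(\m_x)$.

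Combining the two inclusions yields $\mathscr{J}_x = \P_{J_n R}(\m_x)$, which is exactly the claim. The step I expect to require the most care is the inclusion $\mathscr{L}_{J_n X}(x) \subset \Core_{J_n X}(x)$, that is, verifying that the analytic flow construction defining the leaf is compatible with the purely algebraic equivalence relation defining the core. Concretely, one must check that if $y$ is reached from $x$ by such a flow, then $\P_{J_n R}(\m_y) = \P_{J_n R}(\m_x)$; the argument in the proof of Lemma~\ref{lem:leaf} already shows that $\P_{J_n R}(\m_x)$ is $n$-chiral Poisson and contained in every $\m_y$ with $y \in \mathscr{L}_{J_n X}(x)$, and by symmetry of the leaf relation one obtains the reverse inclusion of cores. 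Once this compatibility is secured, the corollary follows formally from Lemma~\ref{lem:leaf} and Lemma~\ref{Lem:properties-m-cores}~(3).
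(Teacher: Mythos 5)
Your proof is correct and follows essentially the same route as the paper: both establish the key inclusion $\mathscr{L}_{J_n X}(x) \subset \Core_{J_n X}(x)$ (you via maximality of the core plus symmetry of the leaf relation, the paper via applying Lemma~\ref{lem:leaf} at both $x$ and $y$ — equivalent arguments) and then sandwich $\overline{\Core_{J_n X}(x)}$ between $\overline{\mathscr{L}_{J_n X}(x)} = \V(\P_{J_n R}(\m_x))$ and $\V(\P_{J_n R}(\m_x))$ using the minimality statement for the core closure. If anything, your citation of Lemma~\ref{Lem:properties-m-cores}~(3) (the $n$-jet version) is the more precise reference than the paper's appeal to Lemma~\ref{Lem:defining-ideal}.
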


\begin{proof}
By Lemma \ref{lem:leaf}, 
$\mathscr{L}_{J_n X} (x) \subset \Core_{J_n X}(x)$. 
Indeed, if $y \in \mathscr{L}_{J_n X} (x)$, 
then $\mathscr{L}_{J_n X} (y)=\mathscr{L}_{J_n X} (x)$ 
and so $\V(\P_{J_n R}(\m_{y}))=\V(\P_{J_n R}(\m_{x}))$ by Lemma \ref{lem:leaf},  
that is, $\P_{J_n R}(\m_{y})=\P_{J_n R}(\m_{x})$, whence 
$y \in \Core_{J_n X}(x)$. 
%On the other hand, by Lemma \ref{Lem:defining-ideal} the defining ideal 
%of the closure of $\Core_{J_n X}(x)$ contains $\P_{J_n R}(\m_{x})$.  
So by Lemma \ref{Lem:defining-ideal}, 
$$\V(\P_{J_n R}(\m_{x}))=\overline{\mathscr{L}_{J_n X} (x) } 
 \subset \overline{\Core_{J_n X}(x)} \subset \V(\P_{J_n R}(\m_{x})),$$
whence the statement. 
\end{proof}

%Following the terminology of \cite{Brown-Gordon} 
%we say that the Poisson bracket on $X$ is 
%{\em algebraic} if the symplectic leaves in $X$ 
%are all locally closed. 
%In this case, the symplectic leaves coincide
%with the symplectic cores of $X$. 
%Moreover, the defining ideal of the symplectic core 
%of a point $x\UTF{00A0}\in X$ is $\P_R(\m_x)$ (\cite[Proposition 
%3.6]{Brown-Gordon}) and so the condition 
%of Lemma \ref{Lem:defining-ideal} is satisfied 
%for any $x\in X$. 
%
%\begin{Rem}
%In fact, according to \cite[Lemma 
%3.5 and Proposition 3.6 (1)]{Brown-Gordon}, 
%it seems that the defining ideal of symplectic 
%cores closures are always the corresponding 
%Poisson cores, although the symplectic cores 
%are not always locally closed. 
%\end{Rem}

\begin{Co} \label{Co:equality-cores}  
%Assume that the Poisson 
%bracket on $X$ is algebraic. 
Let $x \in X$, and set $Y:=\overline{\Core_{X}(x)}$. 
Then 
$$\J Y = \overline{\Core_{\J X}(x_\infty)}.$$
%and $\Core_{\J X}(x_\infty)$ is open in its closure. 
Moreover, for $n \in \Z_{\ge 0}$, if $J_n Y$ is irreducible, 
then 
$J_n Y = \overline{\Core_{J_n X}(x_n)}.$ 
%and $\Core_{J_n X}(x_n)$ is open in its closure. 
\end{Co}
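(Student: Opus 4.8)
The plan is to read both equalities off Proposition~\ref{Pro:equality-cores}, which already identifies $\J Y$ and (when it is irreducible) $J_n Y$ with zero loci of chiral Poisson cores, and then to match those loci with closures of chiral symplectic cores via the leaf description of Section~\ref{sec:chiral-symplectic-leaves}.

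I would dispose of the ``moreover'' (jet-scheme) part first, since it is immediate. Taking $x_0:=x$, we have $Y=\overline{\Core_X(x)}=\overline{\mathscr{L}_X(x_0)}$, and the constant arc $x_n=\iota_n(x)$ lies in $J_n\mathscr{L}_X(x_0)$, so Corollary~\ref{Co:leaf} applies and yields $\overline{\Core_{J_n X}(x_n)}=\V(\P_{J_n R}(\m_{x_n}))$. If $J_n Y$ is irreducible, Proposition~\ref{Pro:equality-cores}(1) identifies this same locus with $J_n Y$, whence $J_n Y=\overline{\Core_{J_n X}(x_n)}$.

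For the arc-space equality, Proposition~\ref{Pro:equality-cores}(2) gives $\J Y=\V(\P_{\J R}(\m_{x_\infty}))$, and Lemma~\ref{Lem:defining-ideal} gives the inclusion $\overline{\Core_{\J X}(x_\infty)}\subseteq\V(\P_{\J R}(\m_{x_\infty}))=\J Y$; the whole point is the reverse inclusion. Since $\J Y$ is a reduced closed vertex Poisson subscheme of $\J X$, Lemma~\ref{Lem:quotient} lets me replace $\Core_{\J X}(x_\infty)$ by $\Core_{\J Y}(x_\infty)$ and work inside $\J Y$, which is irreducible by Kolchin's Theorem~\ref{Th:Kolchin}. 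To exhibit enough points of the core I would introduce the chiral symplectic leaf $\mathscr{L}_{\J X}(x_\infty)$, defined as in Section~\ref{sec:chiral-symplectic-leaves} by travelling from $x_\infty$ along the integral flows of the fields $a_{(k)}$, $a\in A:=\C[Y]$, $k\ge 0$, on $\J Y$; exactly as in the proof of Corollary~\ref{Co:leaf} this leaf satisfies $\mathscr{L}_{\J X}(x_\infty)\subseteq\Core_{\J X}(x_\infty)$, so it suffices to prove $\overline{\mathscr{L}_{\J X}(x_\infty)}=\J Y$.

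The crucial local fact is that, by \eqref{eq:10-Ara12}, for $a\in R$ and $k>n$ the derivation $a_{(k)}$ annihilates $J_n R$; hence under $\pi_{\infty,n}$ a flow along $a_{(k)}$ with $k>n$ is stationary while one with $k\le n$ becomes the corresponding flow on $J_n Y$. This makes the inclusion $\pi_{\infty,n}(\mathscr{L}_{\J X}(x_\infty))\subseteq\mathscr{L}_{J_n X}(x_n)$ transparent, and I would upgrade it to an equality by lifting finite-level flows. Granting this, and writing $Z:=\overline{\mathscr{L}_{\J X}(x_\infty)}$, one gets $\overline{\pi_{\infty,n}(Z)}=\overline{\mathscr{L}_{J_n X}(x_n)}=C_n$ for all $n$, where $C_n:=\V(\P_{J_n R}(\m_{x_n}))=\overline{(\pi_{n,0}^{Y})^{-1}(Y_{reg})}$ by Lemma~\ref{lem:leaf} and Proposition~\ref{Pro:equality-cores}(1); since $\J Y=\varprojlim_n C_n$ gives $\C[\J Y]_{\on{red}}=\varinjlim_n\C[C_n]$, the radical ideal of $Z$ meets each $\C[C_n]$ trivially, hence is zero, so $Z=\J Y$ and $\J Y\subseteq\overline{\Core_{\J X}(x_\infty)}$. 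The main obstacle I anticipate is exactly the analytic justification of these integral flows on the infinite-dimensional $\J Y$ — in particular the lifting of a finite concatenation of finite-time flows from $J_n Y$ — which I would handle by realizing the flows as compatible projective limits of the finite-level analytic flows and keeping every ideal-theoretic conclusion at a finite jet level, where each $J_m Y$ is finite-dimensional.
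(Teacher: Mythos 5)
Your proposal is correct and follows essentially the same route as the paper: both the arc-space and the jet-level statements are read off from Proposition~\ref{Pro:equality-cores} combined with Corollary~\ref{Co:leaf} (equivalently Lemma~\ref{lem:leaf}) at finite jet level, exactly as in the paper's proof. The only divergence is that where the paper dismisses the passage to the arc space with the phrase ``taking the limit when $n$ goes to $+\infty$'', you supply an explicit mechanism for that limit --- compatible projective limits of the finite-level flows together with the description of $\C[\J Y]$ as a direct limit of the $\C[C_n]$ --- which is a legitimate (indeed more careful) way of carrying out the step the paper leaves implicit, and which works because the fields $a_{(k)}$ act triangularly on the jet variables, so the finite-level flows have a common existence time and project compatibly.
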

Here, recall that $x_n$ (resp.~$x_\infty$) stands for $\iota_n (x)$ 
(resp.~$\iota_\infty(x)$) as explained before Proposition \ref{Pro:equality-cores}.

\begin{proof}
%Set $Y=\overline{\mathscr{L}_X(x)}$, and $A=\C[Y]$. 
Recall that $\mathscr{L}_X(x)=\Core_{X}(x)$ 
is contained in the smooth locus of $Y$, and that 
$\overline{\pi_{n,0}^{-1}(\Core_X(x))}$ has dimension 
$(n+1) \dim Y$. 
%By Lemma \ref{Lem:rank_der} and its proof, the space of derivations 
%on $\C[\overline{\pi_{n,0}^{-1}(\Core_X(x))}]$ \commentA{Why 
%do we need of this??}
% is generated by the elements $a_{(k)}$, $a \in A$, $k=0,\ldots,n$.  

Since $\P_R(\m_x)$ is the defining ideal of 
$\overline{\Core_X(x)}$, it results from 
Proposition \ref{Pro:equality-cores} that 
for all $n \in\Z_{\ge 0} \cup \{\infty\}$, 
%$\P_{\J R}(\m_{x_\infty}) =\J \P_R (\m_x)$. 
%Moreover,  by Proposition \ref{Pro:equality-cores} (1), 
%for all $n \in\Z_{\ge 0}$, 
$\V(\P_{J_n R}(\m_{x_n}))=\overline{(\pi_{n,0}^Y)^{-1}(
Y_{reg})}$, and we have $\V(\P_{J_n R}(\m_{x_n}))=J_n Y$ if $J_n Y$ is 
irreducible. In particular, $\J Y=\V(\P_{\J R}(\m_{x_\infty}))$. 

So by Corollary \ref{Co:leaf}, for all $n \in\Z_{\ge 0}$, 
$$\overline{\Core_{J_n X}(x_n)} = \V(\P_{J_n R}(\m_{x_n})) 
=\overline{(\pi_{n,0}^Y)^{-1}(
Y_{reg})} \subset J_n Y.$$
{Taking the limit when $n$ goes to $+\infty$}, 
%\commentA{Is it ok for the projective limit?} 
we obtain:
$$\overline{\Core_{\J X}(x_\infty)} = \V(\P_{\J R}(\m_{x_\infty})) 
= \J Y.$$
This concludes the proof.  
\end{proof}
Now assume further 
that $X$ has only finitely many symplectic leaves. 
(Note that  the poisson bracket on $X$ is algebraic under this condition
by
\cite[Proposition 
3.6]{Brown-Gordon}.)
 If $X_1,\ldots,X_r$ are the irreducible components 
of $X$, then for some 
 $x_1,\ldots,x_r\in X$, we have 
$$X_i =\V(\P_{R}(\m_i))= \overline{\Core_{X}(x_i)}, \qquad 
i=1,\ldots,r,$$
where $\m_1,\ldots,\m_r$ are the maximal ideals of $R$ corresponding 
to $x_1,\ldots,x_r$, respectively, see \cite{Gin03}. 

From the decomposition 
$X=X_1 \cup \ldots \cup X_r$, we get  that
$$\J X= \J X_1 \cup \ldots\cup \J X_r$$
since the $X_i$ are closed (see 
Section \ref{sec:arc}). 
Moreover, $\J X_1, \ldots, \J X_r$ are precisely the irreducible 
components of $\J X$. Indeed, for $i=1,\ldots,r$, $\J X_i$ 
is closed in $\J X$ since $X_i$ is closed in $X$,  
and for any 
$i\not=j$, we have $\J X_i \not \subset \J X_j$, 
otherwise, taking the image by the canonical projection 
$\pi_{\infty,0} \colon \J X \to X$, 
we would get 
$X_i \subset X_j$.
%\commentA{This argument has been already 
%used without explanation in Lemma 5.2 (3). Maybe it is ok.} 

Hence, as a consequence of Corollary \ref{Co:equality-cores}, we obtain the following result. 

\begin{Th} \label{Th:arc-symplectic-cores}
Let $X$ be a Poisson scheme.
Assume that $X$ has only finitely many symplectic leaves. 
Then each irreducible components of 
$\J X$ is the closure of some chiral symplectic core. 

More precisely, if $X_1,\ldots,X_r$ are the irreducible 
components of $X$,
then for $i=1,\ldots,r$,  
$X_i = \overline{\Core_{X}(x_i)}$  for some $x_i \in X_i$, 
and we have: 
$$\J X_i= \overline{\J \Core_{X}(x_i)} = 
\overline{\Core_{\J X}(x_{i,\infty})}.$$ 
\end{Th}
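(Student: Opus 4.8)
The plan is to deduce Theorem~\ref{Th:arc-symplectic-cores} almost entirely from the machinery already assembled, the key input being Corollary~\ref{Co:equality-cores}. The whole theorem is a repackaging of that corollary, applied componentwise, together with the decomposition of $\J X$ into irreducible components that was worked out in the paragraphs immediately preceding the statement. So the proof will be short, and the real content is organizational rather than computational.

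First I would invoke the finiteness hypothesis to set up the data. Since $X$ has finitely many symplectic leaves, the Poisson bracket on $X$ is algebraic by \cite[Proposition 3.6]{Brown-Gordon}, which is exactly the standing assumption of Section~\ref{sec:chiral-symplectic-leaves}, so all the results of that section are available. As recalled just before the theorem, if $X_1,\dots,X_r$ are the irreducible components of $X$ then there are points $x_i \in X_i$ with
\begin{align*}
X_i = \V(\P_R(\m_i)) = \overline{\Core_X(x_i)}, \qquad i=1,\dots,r,
\end{align*}
where $\m_i = \m_{x_i}$; this is the statement cited from \cite{Gin03}. I would also recall the identification of the irreducible components of $\J X$: from $X = X_1 \cup \dots \cup X_r$ we get $\J X = \J X_1 \cup \dots \cup \J X_r$ because each $X_i$ is closed (Section~\ref{sec:arc}), and the argument given before the theorem shows that the $\J X_i$ are pairwise non-contained, hence are precisely the irreducible components of $\J X$ (here Kolchin's Theorem~\ref{Th:Kolchin} guarantees each $\J X_i$ is itself irreducible).

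Next I would apply Corollary~\ref{Co:equality-cores} to each component. Fixing $i$ and taking $Y := X_i = \overline{\Core_X(x_i)}$, the corollary gives directly
\begin{align*}
\J X_i = \J Y = \overline{\Core_{\J X}(x_{i,\infty})},
\end{align*}
which is the second displayed equality in the theorem. For the middle term $\overline{\J \Core_X(x_i)}$, I would observe that $\Core_X(x_i) = \mathscr{L}_X(x_i)$ is a dense locally closed subset of $Y = X_i$ (it is the symplectic leaf through $x_i$, open in its closure $Y$ by the algebraicity hypothesis). Taking arc spaces and Zariski closures, the density of $\Core_X(x_i)$ in $X_i$ yields $\overline{\J \Core_X(x_i)} = \J X_i$ as topological spaces; concretely one uses that $\J$ applied to an open dense subset of an irreducible variety has dense image, so its closure in $\J X_i$ is all of $\J X_i$. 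Chaining the three identifications gives the asserted string of equalities.

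The step I expect to require the most care is the middle equality $\J X_i = \overline{\J \Core_X(x_i)}$, since $\J$ of a locally closed subvariety is not literally a subscheme of $\J X_i$ in the naive sense and one must argue at the level of topological spaces and closures. The cleanest route is to note $\Core_X(x_i)$ is open dense in the irreducible $X_i$, so $(\pi_{\infty,0}^{X_i})^{-1}(\Core_X(x_i))$ is open dense in the irreducible $\J X_i$ (by Kolchin), and that this preimage coincides with $\J \Core_X(x_i)$ by \cite[Lemma 2.3]{EinMus} (the same lemma used in Section~\ref{sec:chiral-symplectic-leaves}); its closure is then all of $\J X_i$. The two outer equalities are immediate consequences of Corollary~\ref{Co:equality-cores} and carry no further difficulty.
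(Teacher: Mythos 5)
Your proposal is correct and follows essentially the same route as the paper: the paper's own justification is exactly the discussion preceding the theorem (algebraicity of the bracket via Brown--Gordon, the identification $X_i = \V(\P_R(\m_i)) = \overline{\Core_X(x_i)}$ from Ginzburg, and the identification of the $\J X_i$ as the irreducible components of $\J X$), followed by an application of Corollary~\ref{Co:equality-cores}. Your careful treatment of the middle equality $\J X_i = \overline{\J \Core_X(x_i)}$ (openness of the leaf in $X_i$, identification of the arc space of an open subscheme with the preimage under $\pi_{\infty,0}$, and Kolchin irreducibility) fills in a step the paper leaves implicit, and it is sound.
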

%\begin{Pro}  \label{Pro:finite-symplectic-cores}
%For any $i=1,\ldots,r$, $\J X_i$ 
%is the closure of $\Core_{\J R}(\m_{i}).$
%\end{Pro}

\section{Applications to quasi-lisse vertex algebras} 
\label{sec:quasi-lisse}

%In the special case where $V$ is quasi-lisse we obtain the 
%following. 
%
%\begin{Lem} \label{Lem:rank-strata2}
%Assume that $V$ is quasi-lisse, and let $\m \in\LL$ 
%with ${\rm rk}\,\m=j$. 
%Then $\Core_{\gr V}(\m)$ coincides with 
%the irreducible component 
%of $\LL_j^{0}$ containing $\m$. 
%Moreover, this irreducible component has dimension 
%$(m+1)j$. 
%\end{Lem}
%
%\begin{proof} 
%Let $\m_0:= \pi_{\infty,0}^{X_V} (\m)$. 
%Since $X_V$ has finitely many symplectic leaves, 
%it results from \cite[Proposition 3.7]{Brown-Gordon} that 
%$\Core_{R_V}(\m_0)$ coincides with the irreducible 
%component $Z$ of $\pi_{\infty,0}^{X_V}(\LL_j^{0} )$ containing 
%$\m_0$.  Arguing as in the proof of Lemma \ref{Lem:rank-strata} (3), 
%we obtain that 
%$$\Core_{\J R_V}(\m) = \J \overline{\Core_{R_V}(\m_0)} 
%= \J Z \subset \J (\LL_j^{0} \cap X_V) 
%= \LL_j^{0}.$$ 
%Since $Z$ is an irreducible component of 
%$\LL_j^{0} \cap X_V$, 
%$\J Z$ is an irreducible component 
%of $\LL_j^{0}$ by Kolchin's Theorem \ref{Th:Kolchin}, 
%and this component is obviously the one containing 
%$\m$ (it is unique because $\m_0$ is a smooth point 
%of $\LL_j^{0} \cap X_V$). 
%\end{proof}
%
%
%

In this section we assume that $V$ is a vertex algebra (not necessarily commutative or Poisson).

Recall that $V$ is naturally filtered by the Li filtration (\cite{Li05}, see also
\cite{Ara12}), 
$$V= F^0 V \supset F^1 V \supset \cdots \supset F^p V\supset \cdots ,$$
where $F^p V$ is the subspace of $V$ spanned by the vectors 
$$a_{(-n_1-1)}^1\ldots a_{(-n_r-1)}^r b,$$ 
with $a^{i} \in V$, $b \in V$, $n_i \in\Z_{\ge 0}$, $n_1 +\cdots+n_r \ge p$. 
The associated graded vector space
$\gr V =\oplus_p F^p V/  F^{p+1} V$ is naturally a vertex Poisson algebra 
\cite{Li05}.
%To lighten the notations, we will write $\P_V(\m), \Core_V(\m),\ldots$ 
%for $\P_{\gr V}(\m), \Core_{\gr V}(\m),\ldots$. 
We have %\cite{Li05} 
$$F^1 V=C_2(V):={\rm span}_\C\{a_{(-2)}b \mid a,b, \in V\}.$$
Let 
$$R_V= V/C_2(V) = F^{0} V/ F^1 V \subset \gr V$$ 
be the {\em Zhu $C_2$-algebra of $V$}. 
It is a Poisson algebra \cite{Zhu96}, and the Poisson algebra structure can be obtained 
by restriction to $R_V$ of the vertex Poisson algebra on $\gr V$. 
Namely,
$$1 = \overline{ \vac}, \quad \bar a \cdot \bar b =\overline{a_{(-1)}b}\quad \text{ and }\quad
\{\bar a,\bar b\}=\overline{a_{(0)}b},$$
for $a,b \in V$, 
where $\bar a = a + C_2(V)$. 

Let 
$$\tilde{X}_V:=\Spec(R_V)\quad \text{ and }\quad X_V :=\Specm(R_V)$$ 
be the {\em associated scheme} and the {\em associated variety} 
of $V$, respectively (\cite{Ara12}). 

\bigskip

{\em We assume that the filtration $(F^p V)_p$ is separated, that is, 
$\bigcap F^p V = \{0\}$ and that $V$ is strongly finitely generated, 
that is, $R_V$ is finitely generated.}
Note that the first condition is satisfied if $V$ is positively graded.

%\bigskip 
%
%It was shown in {\cite[Lemma 4.2]{Li05}}
%that 
% $\gr V$ is generated by the subring $R_V$ as a differential algebra.
%Thus, we have a surjection 
%$\J R_V \ra  \gr V$ of differential algebras by 
%the universal property of $\J R_V$ (cf.~Remark \ref{Rem:universal property of JR}). 
%
%
%This is in fact a homomorphism of vertex Poisson algebras:
\begin{Th}[{\cite[Lemma 4.2]{Li05}}, 
{\cite[Proposition 2.5.1]{Ara12}}]
\label{thm:surj-poisson} 
The identity map $ R_V \rightarrow R_V$ induces a surjective vertex 
Poisson algebra homomorphism
\begin{align*}
J_{\infty} R_V =\C[J_{\infty} (\tilde X_V)]  \twoheadrightarrow \gr V.
\end{align*}
\end{Th}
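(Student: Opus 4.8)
The plan is to realize the desired map as the unique extension, furnished by the universal property of the arc space (Remark~\ref{Rem:universal property of JR}), of the inclusion of $R_V$ into $\gr V$ as the degree-zero piece $\gr_0 V$, and then to check separately that this extension respects the full vertex Poisson structure and that it is surjective. First I would recall that $R_V=\gr_0 V$ is a Poisson subalgebra of $\gr V$: the commutative product of $\gr V$ restricts to $\bar a\cdot\bar b=\overline{a_{(-1)}b}$, and, using Li's filtration estimate from \cite{Li05} that $a_{(n)}b\in F^{p+q-n}V$ for $a\in F^pV$, $b\in F^qV$ and $n\ge 0$, the bracket $\bar a_{(0)}\bar b=\overline{a_{(0)}b}=\{\bar a,\bar b\}$ lands in $\gr_0 V=R_V$, while $\bar a_{(n)}\bar b\in\gr_{-n}V=0$ for $n>0$. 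Since $(\gr V,T)$ is a differential algebra and $R_V\hookrightarrow\gr V$ is a $\C$-algebra homomorphism, Remark~\ref{Rem:universal property of JR} produces a unique differential algebra homomorphism $h\colon\J R_V\to\gr V$ restricting to this inclusion; by construction $h$ is compatible with the products and with $T$.

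Next I would promote $h$ to a homomorphism of vertex Poisson algebras, that is, establish $h(u_{(n)}v)=h(u)_{(n)}h(v)$ for all $u,v$ and all $n\ge 0$. The computation above shows exactly this for $u,v\in R_V$, because by Theorem~\ref{Th:Poisson-structure-arc-spaces} the structure of $\J R_V$ on $R_V$ is $u_{(0)}v=\{u,v\}$ and $u_{(n)}v=0$ for $n>0$, which matches the operations just computed inside $\gr V$. To pass from generators to arbitrary elements I would argue in two stages. Fixing $u\in R_V$, the set of $v$ for which the identity holds contains $R_V$ and is stable under $T$ by \eqref{eq:PVA-1} and under products by the Leibniz rule \eqref{eq:PVA-4} (together with $h$ being an algebra map); since $\J R_V$ is generated as a differential algebra by $R_V$, the identity holds for all $v$. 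Then, fixing $v$, the set of admissible $u$ again contains $R_V$, is $T$-stable by \eqref{eq:PVA-1}, and is closed under products by the right Leibniz rule \eqref{eq:PVA-5}, which expresses $(u_1\cdot u_2)_{(n)}v$ as a finite sum of multiplications by $T$-derivatives applied to brackets $(u_i)_{(m)}v$ that are already under control. Hence the identity holds for all $u,v$ and $h$ is a vertex Poisson homomorphism.

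Finally, the surjectivity --- which is also the generation statement invoked above --- is where the real work lies. The key is the exact operator identity $a_{(-n-1)}=\tfrac{1}{n!}(T^n a)_{(-1)}$ on $V$, obtained by iterating $(Ta)_{(m)}=-m\,a_{(m-1)}$. By the reconstruction theorem $V$ is spanned by monomials $u=a^1_{(-n_1-1)}\cdots a^r_{(-n_r-1)}\vac$, and only those with $n_1+\cdots+n_r=p$ contribute a nonzero symbol in $\gr_p V$. Rewriting each factor via the identity and passing to symbols gives $\sigma_p(u)=\tfrac{1}{n_1!\cdots n_r!}\,(T^{n_1}\bar a^1)\cdots(T^{n_r}\bar a^r)$, where $\bar a^i=\sigma_0(a^i)\in R_V$ and each $T^{n_i}\bar a^i\in\gr_{n_i}V$, so the degrees add to exactly $p$ and the symbol of a product is the product of symbols. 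Thus $\gr_p V$ is spanned by products of $T$-derivatives of elements of $R_V$, i.e.\ $\gr V$ is generated as a differential algebra by $R_V$, and $h$ is onto. I expect this last step to be the main obstacle: one must track the Li filtration degrees carefully to guarantee that the symbol of each iterated product factors as claimed and that degree drops are harmless, which is precisely the content of \cite[Lemma 4.2]{Li05} and \cite[Proposition 2.5.1]{Ara12} that I would ultimately rely upon.
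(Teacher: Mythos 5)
Your proposal is correct and takes essentially the same route as the paper, which offers no independent argument but defers to \cite[Lemma 4.2]{Li05} and \cite[Proposition 2.5.1]{Ara12}: extend $R_V=\gr_0 V\hookrightarrow \gr V$ by the universal property of $(J_\infty R_V,T)$, check compatibility with the vertex Poisson operations on generators and propagate by the Leibniz rules, and obtain surjectivity from the fact that $\gr V$ is generated as a differential algebra by $R_V$ (Li's lemma, which your symbol computation correctly reproves).
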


The {\em singular support} of a vertex algebra $V$ is  
\begin{align*}
{\widetilde{SS}(V)}:= \Spec \gr V \subset J_\infty (\tilde X_V).
\end{align*}
{We set 
\begin{align*}
SS(V):= \Specm \gr V \subset J_\infty X_V.
\end{align*}
In the above inclusion, $ J_\infty X_V$ is viewed as a topological space.}

%Denote by $\J(X)$ the infinite jet scheme of a finitely generated 
%scheme $X$, and by 
%$\pi_\infty \colon \J(X) \to X$ the canonical projection. 
%We have $\J(X)= \J(X_{\rm red})$, where $X_{\rm red}$ 
%is the reduced scheme corresponding to $X$. 

%We have an injective morphism 
%\begin{align*}
%\iota \colon X_V \hookrightarrow SS(V)
%\end{align*}
%whose comomorphism  
%corresponds to the canonical projection 
%\begin{align*}
%\iota^* \colon \gr V \twoheadrightarrow R_V. 
% \end{align*}
%Hence 
%we get the following commutative diagram: 
%\begin{align*} 
%\xymatrix{ \J (R_V) \ar@{->>}[rr]^{\quad j^*} \ar[rrd]_{(j \circ \iota)^*} 
%&&  \gr V \ar@{->>}[d]^{\iota^*}\\ 
%&& R_V }
%\end{align*} 
%where $j$ is the inclusion $j \colon SS(V) \hookrightarrow J_\infty(X_V)$ 
%whose comomorphism is the surjection $j^* \colon \J (R_V)  \twoheadrightarrow \gr V$ 
%of Theorem \ref{thm:surj-poisson}.
%Note that $(j \circ \iota)^*$ coincides with $(\iota_\infty^{X_V})^*$.

Recall from the introduction 
that the vertex algebra $V$ is called {\em quasi-lisse} if 
the Poisson variety $X_V$ has finitely many symplectic leaves 
(\cite{{Arakawam:kq}}).
%We refer to \cite{Ara09b,AM15,AraMor17,AraMor16b} for 
%many examples of simple quasi-lisse vertex algebras. 

\begin{Th}\label{Th:quasi-lisse}
Assume that $V$ is quasi-lisse. Then 
$SS(V)$ is a finite union of chiral symplectic  
cores closures in $\gr V$. 
Moreover, $SS(V) = \J X_V$ as topological spaces.
\end{Th}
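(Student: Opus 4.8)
The plan is to leverage the machinery already assembled in the earlier sections, chiefly Theorem~\ref{Th:arc-symplectic-cores}, and to reduce the two assertions to statements about the associated variety $X_V$, which is assumed to have finitely many symplectic leaves. First I would record the surjection $\J R_V = \C[\J\tilde X_V] \twoheadrightarrow \gr V$ from Theorem~\ref{thm:surj-poisson}. Writing its kernel as a vertex Poisson ideal $\mathcal{N}$, we have $\gr V = \J R_V/\mathcal{N}$, so that $\widetilde{SS}(V) = \widetilde\V(\mathcal{N}) \subset \J\tilde X_V$ is a closed chiral Poisson subscheme, and its reduced space $SS(V) = \Specm \gr V$ sits inside $\J X_V$ as a closed subset. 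The goal of the second statement is the reverse inclusion, i.e. that this closed subset is all of $\J X_V$ as topological spaces.

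For the equality $SS(V) = \J X_V$, I would argue component by component. Since $X_V$ is quasi-lisse it has finitely many symplectic leaves, hence finitely many irreducible components $X_1,\ldots,X_r$, and by Theorem~\ref{Th:arc-symplectic-cores} each $X_i = \overline{\Core_X(x_i)}$ and $\J X = \J X_1 \cup \cdots \cup \J X_r$ with the $\J X_i$ being precisely the irreducible components of $\J X_V$. The key point is that $SS(V)$ must contain each $\J X_i$. Here I would use that $\gr V$ surjects onto $R_V = \C[X_V]$ (indeed $R_V \subset \gr V$ is the degree-zero part), so $\pi_{\infty,0}(SS(V)) = X_V$; thus $SS(V)$ meets each component $X_i$. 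Because $SS(V) = \widetilde\V(\sqrt{\mathcal{N}})_{\mathrm{red}}$ is cut out by a \emph{chiral Poisson} radical ideal (its radical is again chiral Poisson by Lemma~\ref{Lem:Poisson-ideal}), each of its irreducible components is a chiral Poisson subscheme whose image under $\pi_{\infty,0}$ is Poisson-stable in $X_V$. The decisive step is to show that whenever $SS(V)$ contains a point lying over the generic symplectic leaf of $X_i$, it must contain the closure of the corresponding chiral symplectic core, which by Corollary~\ref{Co:equality-cores} and Theorem~\ref{Th:arc-symplectic-cores} equals $\J X_i$.

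More concretely, for each $i$ pick a point $y_i \in SS(V)$ with $\pi_{\infty,0}(y_i)$ in the open leaf $\Core_X(x_i)$ of $X_i$. Since $SS(V)$ is a reduced chiral Poisson subscheme, Lemma~\ref{Lem:defining-ideal} shows its defining ideal contains $\P_{\J R}(\m_{y_i})$, so $\overline{\Core_{\J X}(y_i)} \subset SS(V)$; and by Corollary~\ref{Co:equality-cores} this core closure is exactly $\J\,\overline{\Core_X(\pi_{\infty,0}(y_i))} = \J X_i$. Running over all $i$ gives $\J X_V = \bigcup_i \J X_i \subset SS(V)$, and combined with the obvious inclusion $SS(V) \subset \J X_V$ this yields equality as topological spaces. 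The first assertion of the theorem then follows simultaneously: the finitely many irreducible components of $SS(V)$ are precisely the $\J X_i = \overline{\Core_{\J X}(x_{i,\infty})}$, so $SS(V)$ is a finite union of chiral symplectic core closures.

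I expect the main obstacle to be the bookkeeping at the interface between the scheme $\widetilde{SS}(V)$ and its reduction $SS(V)$: the surjection of Theorem~\ref{thm:surj-poisson} controls $\widetilde{SS}(V)$ scheme-theoretically, but Kolchin irreducibility, the core-closure identifications, and the statement $SS(V) = \J X_V$ all live at the level of topological spaces and reduced structures. One must therefore be careful to pass to radicals (invoking Lemma~\ref{Lem:Poisson-ideal} to stay within chiral Poisson ideals) and to verify that $\pi_{\infty,0}(SS(V)) = X_V$ genuinely meets every component in its open leaf rather than only in a boundary stratum. Once it is granted that each component of $SS(V)$ dominates a component of $X_V$ and that the chiral symplectic core over the generic leaf fills out the whole arc space of that component, the argument closes.
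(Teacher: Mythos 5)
Your overall strategy is the paper's own: realize $SS(V)$ as a closed subset of $\J X_V$ cut out by a chiral Poisson ideal, and show it contains the chiral symplectic core closures that cover $\J X_V$ by Theorem~\ref{Th:arc-symplectic-cores}. But the decisive step, as you wrote it, has a genuine gap. You pick an \emph{arbitrary} point $y_i \in SS(V)$ with $\pi_{\infty,0}(y_i)$ in the open leaf $\Core_X(x_i)$, and then invoke Corollary~\ref{Co:equality-cores} to conclude $\overline{\Core_{\J X}(y_i)} = \J X_i$. The corollary does not say this: it identifies $\J\,\overline{\Core_X(x)}$ with $\overline{\Core_{\J X}(x_\infty)}$ only for the \emph{constant} arc $x_\infty = \iota_\infty(x)$, and your $y_i$ need not be constant. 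Nothing in the paper computes the core closure of a general arc lying over a leaf; to get it one would have to rerun the rank/dimension argument of Proposition~\ref{Pro:equality-cores} and Corollary~\ref{Co:leaf} at a non-constant jet and then pass to the limit in $n$ --- plausible, but an extra argument you have not supplied. There is also a small inversion in the same sentence: what Lemma~\ref{Lem:defining-ideal} together with maximality of the chiral Poisson core gives is that $\P_{\J R_V}(\m_{y_i})$ \emph{contains} the (radical, chiral Poisson) defining ideal of $SS(V)$, not that the defining ideal contains $\P_{\J R_V}(\m_{y_i})$; it is the corrected containment that yields $\overline{\Core_{\J X}(y_i)} \subset \V(\P_{\J R_V}(\m_{y_i})) \subset SS(V)$.

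The repair is already contained in your own first step, and it is exactly what the paper does. The surjections $\J R_V \twoheadrightarrow \gr V \twoheadrightarrow R_V$ give embeddings $X_V \hookrightarrow SS(V) \hookrightarrow \J X_V$ whose composition is $\iota_\infty$, so the \emph{constant} arcs $x_{i,\infty} = \iota_\infty(x_i)$, with $x_i$ in the open leaf of $X_i$, lie in $SS(V)$. Taking $y_i = x_{i,\infty}$, Corollary~\ref{Co:equality-cores} applies verbatim and gives $\overline{\Core_{\J X}(x_{i,\infty})} = \J X_i$, and your maximality argument (with the containment corrected as above) gives $\J X_i \subset SS(V)$; the chain $\J X_V = \bigcup_i \J X_i \subset SS(V) \subset \J X_V$ then closes both assertions at once. (The paper phrases the containment step through Lemma~\ref{Lem:quotient}, identifying $\Core_{\J X_V}(x_{i,\infty})$ with the core computed inside $SS(V)$, which is a subset of $SS(V)$ by definition; your route through Lemma~\ref{Lem:defining-ideal} and maximality of the chiral Poisson core is an equivalent way to obtain the same inclusion.)
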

\begin{proof}
%Let $\LL$ be the maximal spectrum of $\gr V$, that is, 
Set 
$$\LL = \Specm \gr V=SS(V),$$
and let $X_1,\ldots,X_r$ be the irreducible components of $X_V$. 
By Theorem \ref{Th:arc-symplectic-cores}, we have 
\begin{align}\label{eq:SS_V}
\J X_V = \overline{\Core_{\J X_V}(x_{1,\infty})} \cup \ldots \cup 
\overline{\Core_{\J X_V}(x_{r,\infty})},
\end{align}
where $x_i \in (X_i)_{reg}$ for $i=1,\ldots,r$. 
By Theorem \ref{thm:surj-poisson}, 
$\gr V$ is a vertex Poisson algebra quotient of $\J R_V$, 
that is, $\gr(V)=\J R_V/I$ with $I$ a vertex Poisson ideal of 
$\J (R_V)$. 
{Furthermore, the surjective morphisms, 
$$\J R_V  \twoheadrightarrow \gr V  \twoheadrightarrow 
R_V,$$ 
induce injective morphisms of varieties, 
$$X_V \hookrightarrow \LL \hookrightarrow \J X_V,$$ 
and the composition map is $\iota_\infty$. 
Hence for $x \in X_V$, we get that $\m_\infty \supset 
I$, where $\m_\infty$ denotes the maximal ideal 
of $\J R_V$ corresponding to $x_\infty$, 
%\commentA{I added this because it was not clear to me how to apply Lemma \ref{Lem:quotient}}
and so $x_\infty$ is a point of~$\LL$.} 

Therefore, by Lemma \ref{Lem:quotient}, 
%\commentA{This looks strange to me: 
%if I take the biggest symplectic  
%variety $C_X(y)$ in $X$ containing a point $y \in Y 
%\subset X$, where $Y$ is a Poisson 
%subvariety of a Poisson variety $X$, 
%I don't see why $C_X(y)$ must be contained 
%in $Y$! And what we claimed here is similar. 
%Are you sure we don't have to first show that 
%$\Core_{\J R_V}(\m_i)$ is contained in $\gr V$?
%But I don't see why it is true a priori...} 
$\Core_{\J (X_V)}(x_{i,\infty})=\Core_{\LL}(x_{i,\infty})$ 
for any $i=1,\ldots,r$. 
Then from \eqref{eq:SS_V} and Theorem \ref{thm:surj-poisson}, 
we obtain that 
$$\LL\subset 
\J X_V = \overline{\Core_{\LL}(x_{1,\infty})} \cup \ldots \cup 
\overline{\Core_{\LL}(x_{r,\infty})} \subset \LL,$$
since $\LL$ is closed, whence the first statement 
and the required equality $\LL = \J X_V$. 
\end{proof}

\begin{Co}
Suppose that 
$\tilde{X}_V$ is smooth, reduced  and  symplectic.
Then $\gr V$ is simple as a vertex Poisson  algebra,
and hence,  $V$ is simple. 
\end{Co}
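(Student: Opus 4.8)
The plan is to first reduce the statement to the purely Poisson-geometric fact that the arc space of a smooth symplectic variety has a simple vertex Poisson coordinate ring, and then to establish that fact using the chiral symplectic leaf machinery of Section~\ref{sec:chiral-symplectic-leaves}. Since $R_V$ is non-negatively graded by conformal weight, $\tilde X_V$ is a cone, hence connected; being also smooth it is irreducible, and as it is reduced, $X_V=\tilde X_V$ is an irreducible smooth Poisson variety. As $\tilde X_V$ is symplectic its Poisson structure is nondegenerate everywhere, so $X_V$ consists of a single symplectic leaf and in particular has finitely many symplectic leaves; thus $V$ is quasi-lisse and Theorem~\ref{Th:quasi-lisse} applies, giving $SS(V)=\J X_V$ as topological spaces. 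Because $\tilde X_V$ is smooth, every jet scheme $J_n\tilde X_V$ is smooth, so each $J_n R_V$ and hence $J_\infty R_V=\varinjlim_n J_n R_V$ are reduced. The surjection $J_\infty R_V\twoheadrightarrow \gr V$ of Theorem~\ref{thm:surj-poisson} has a kernel $I$ with $\V(I)=SS(V)=\J X_V$ equal to the whole space, so $I$ lies in the nilradical of the reduced ring $J_\infty R_V$; therefore $I=0$ and $\gr V\cong J_\infty R_V$.

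It now suffices to show that $J_\infty R_V$ is simple as a vertex Poisson algebra, and in fact I will show it has no proper nonzero chiral Poisson ideal. The idea is to descend to finite level. If $J\subsetneq J_\infty R_V$ is a chiral Poisson ideal, then for each $n$ the intersection $J\cap J_n R_V$ is a proper $n$-chiral Poisson ideal of $J_n R_V$, since it is stable under the derivations $a_{(k)}$, $a\in R_V$, $0\le k\le n$, which preserve $J_n R_V$. As $J_\infty R_V=\bigcup_n J_n R_V$, it is enough to prove that each $J_n R_V$ is simple as an $n$-chiral Poisson algebra; then $J\cap J_n R_V=0$ for all $n$ and $J=0$. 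Because $J_n R_V$ is of finite type, any proper $n$-chiral Poisson ideal is contained in some maximal ideal $\m_x$, $x\in J_n X_V$, hence in $\P_{J_n R_V}(\m_x)$; so the claim reduces to showing $\P_{J_n R_V}(\m_x)=0$ for every $x\in J_n X_V$ and every $n$.

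To prove this I will use the chiral symplectic leaves. Since $X_V$ is a single symplectic leaf its Poisson bracket is algebraic, so Section~\ref{sec:chiral-symplectic-leaves} applies with $\mathscr{L}_{X_V}(x_0)=X_V$ and $Y=X_V$; consequently $J_n\mathscr{L}_{X_V}(x_0)=J_n X_V$, and the $n$-chiral symplectic leaf $\mathscr{L}_{J_n X_V}(x)$ is defined for every $x\in J_n X_V$. By Lemma~\ref{Lem:crucial} and nondegeneracy of the symplectic form, $\mathscr{M}_n$ has rank $(n+1)\dim X_V=\dim J_n X_V$ at every point; equivalently, the Hamiltonian vector fields $a_{(k)}$, $a\in R_V$, $0\le k\le n$, span the tangent space of the smooth irreducible variety $J_n X_V$ at each point. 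Hence the flow-out $\mathscr{L}_{J_n X_V}(x)$ is a nonempty open, so Zariski dense, subset, and $\overline{\mathscr{L}_{J_n X_V}(x)}=J_n X_V$. By Corollary~\ref{Co:leaf} the defining ideal of $\overline{\Core_{J_n X_V}(x)}=\overline{\mathscr{L}_{J_n X_V}(x)}=J_n X_V$ is $\P_{J_n R_V}(\m_x)$; since $J_n R_V$ is reduced this ideal is $0$, as required.

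Putting these together, $\gr V\cong J_\infty R_V$ has no proper nonzero chiral, hence no proper nonzero vertex, Poisson ideal, so it is a simple vertex Poisson algebra. Simplicity of $V$ then follows by the standard filtered--graded argument: a proper nonzero vertex ideal of $V$ would, under the separated Li filtration, give a proper nonzero graded vertex Poisson ideal of $\gr V$ (using that the conformal weight spaces are finite dimensional, by strong finite generation), contradicting the simplicity just proved. The main obstacle is the step converting the pointwise nondegeneracy of the symplectic form into triviality of the chiral Poisson cores: this is where the full-rank-everywhere property of $\mathscr{M}_n$ --- rather than merely generic full rank --- is essential, since it forces the chiral symplectic leaf through \emph{every} arc, not just a generic one, to be open and dense.
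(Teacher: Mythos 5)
Your proof is correct in substance, but it is organized quite differently from the paper's. The paper works entirely at the infinite level: since $X_V$ is smooth and symplectic it consists of a single symplectic leaf, so by the core machinery (Theorem \ref{Th:arc-symplectic-cores}, Corollary \ref{Co:equality-cores}) the arc space $\J X_V$ is a single chiral symplectic core, hence admits no proper nonzero chiral Poisson subscheme; Theorem \ref{Th:quasi-lisse} then transfers this to $\Spec \gr V$, and simplicity of $V$ follows by passing to symbols. You instead (i) combine Theorem \ref{Th:quasi-lisse} with smoothness of the jet schemes $J_n X_V$ (hence reducedness of each $J_n R_V$ and of their limit) to show that the kernel of the surjection $J_\infty R_V \twoheadrightarrow \gr V$ of Theorem \ref{thm:surj-poisson} vanishes, i.e.\ $\gr V \cong \C[\J X_V]$ --- a stronger intermediate statement than anything the paper records --- and then (ii) prove simplicity of $\C[\J X_V]$ by descent to finite level: a chiral Poisson ideal meets each $J_n R_V$ in an $n$-chiral Poisson ideal, and every such ideal vanishes because $\P_{J_n R_V}(\m_x)=0$ for all $x$, which you obtain from the everywhere-full rank of $\mathscr{M}_n$ (Lemma \ref{Lem:crucial} plus nondegeneracy of the symplectic form) and Corollary \ref{Co:leaf}. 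Your finite-level detour buys real safety: in the finitely generated ring $J_n R_V$ every proper ideal lies in a $\C$-point maximal ideal and the Nullstellensatz is available, whereas the paper's one-line infinite-level argument tacitly assumes the analogous facts for the non-Noetherian ring $J_\infty R_V$. What the paper's route buys is brevity: it reuses Theorem \ref{Th:arc-symplectic-cores} as a black box and never needs smoothness or reducedness of jet schemes.

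Three points you should tighten. First, the step ``$\V(I)$ is everything, so $I$ lies in the nilradical'' is not automatic in $J_\infty R_V$: justify it by noting that any $f \in I$ lies in some $J_n R_V$, that $\pi_{\infty,n}\colon \J X_V \to J_n X_V$ is surjective because $X_V$ is smooth, so $f$ vanishes at every $\C$-point of $J_n X_V$ and hence $f=0$ in the reduced, finitely generated ring $J_n R_V$. Second, in your last step the parenthetical claim that conformal weight spaces are finite dimensional ``by strong finite generation'' is neither true (strong finite generation only says $R_V$ is finitely generated, and $V$ is not even assumed graded) nor needed; what the filtered-to-graded argument requires is that $\gr I$ is a nonzero (by separatedness of the Li filtration) chiral Poisson ideal of $\gr V$ whenever $I\neq 0$, which is exactly the paper's (equally terse) final sentence. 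Third, your derivation of connectedness of $\tilde X_V$ from a conformal grading of $R_V$ invokes a hypothesis not in the standing assumptions; both your proof and the paper's implicitly require $X_V$ to be irreducible (one leaf), as the statement genuinely fails for a disconnected smooth symplectic $X_V$, so this should be flagged as part of the meaning of ``symplectic'' rather than deduced.
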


\begin{proof}
If $X_V$ is a smooth symplectic variety then 
$J_{\infty}X_V$ consists of a single chiral symplectic core.
So $J_{\infty}X_V = \Core_{J_{\infty}X_V}(x)$ 
for any $x\in J_{\infty}X_V$. 
It follows that there is no nonzero proper chiral Poisson subscheme in 
$J_{\infty} X_V$. 
So by Theorem \ref{Th:quasi-lisse}, 
there is no nonzero proper chiral Poisson subscheme in 
$\Spec \gr V$, too. 
Hence $\gr V$ is simple as a vertex Poisson algebra. 
And so $V$ is simple since any vertex ideal $I \subset V$ 
defined a vertex Poisson, and so chiral Poisson, ideal $\gr I$ 
in $\gr V$. 
\end{proof}

For example, if $X$ is a smooth affine variety, then the global section 
of the chiral differential operators $\mc{D}_X^{ch}$  %\commentA{Give reference? More details?}
(\cite{MalSchVai99,GorMalSch04,BeiDri04}) is simple, 
because its associated scheme is canonically isomorphic to $T^* X$.
%We also obtain that the equivariant affine W-aglebras 
%is simple. 
In particular, 
the global section of the chiral differential operators $\mc{D}_{G,k}^{ch}$ on
a reductive group  $G$ 
(\cite{GorMalSch01,ArkGai02})
is simple at any level $k$.

\section{Adjoint quotient and arc space of Slodowy slice} 
\label{sec:adjoint-quotient}
Recall that $\g$ is a complex simple Lie algebra. 
Identify $\g$ with $\g^*$ through the Killing form $(~|~)$ of $\g$. 
Let $(e,h,f)$ be an $\sl_2$-triple, and $\Slo_f=f+\g^{e}$ the corresponding 
Slodowy slice, with $\g^{e}$ the centralizer of $e$ in $\g$. 
Recall that $\g^*\cong\g$ is a Poisson variety and that 
the symplectic leaves of $\g^*\cong \g$ are the (co)adjoint orbits. 
The algebra $R_f:=\C[\Slo_f]$ inherits a Poisson 
structure from $\C[\g]$ by Hamiltonian reduction \cite{GanGin02}. 
The Hamiltonian reduction can also be described in terms of the 
BRST cohomology, essentially following Kostant and Sternberg \cite{KosSte87}. 
The symplectic leaves of $\Slo_f$ are precisely the intersections of 
the adjoint orbits of $\g$ with $\Slo_f$.

Let $p_1,\ldots,p_\ell$ be homogeneous generators of $\C[\g^*]^G\cong\C[\g]^G\cong\C[\g]^\g$. 
%Recall that $\deg p_i=m_i+1$ where $m_1,\ldots,m_\ell$ are the exponents of $\g$. 

Consider the adjoint quotient map
$$\psi \colon \g \to \g/\! / G \cong \C^\ell ,\quad x \mapsto (p_1(x),\ldots,p_\ell(x)),$$
and its restriction $\psi_f$ to $\Slo_f$, 
$$\psi_{f} \colon \Slo_f \to \g/\! / G \cong \C^\ell.$$ 
We first recall some facts about $\psi_f$ and its fibers (\cite{Pre02}).

The morphism $\psi_{f}$ is faithfully flat. As a consequence, 
$\psi_{f}$ is surjective and all fibers have the dimension $r-\ell$, 
where $r=\dim \g^{e}$. Furthermore the fibers of $\psi_f$ are generically smooth, 
that is, contain a smooth open dense subset of dimension $r-\ell$, and they are 
irreducible. 

%Moreover, we have the following result.

\begin{Lem} \label{Lem:fiber-of-psi-finite}
Let $\xi \in \g/\!/ G$. Then $\psi_f^{-1}(\xi)$ is a finite union of symplectic leaves.
%closures of $\Slo_f$. 
Hence it is the closure of some  symplectic leaf 
closure. 
\end{Lem}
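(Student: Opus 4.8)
The plan is to compute the fiber first inside $\g$ and only afterwards restrict to the slice. I would begin by recording the evident identity $\psi_f^{-1}(\xi) = \psi^{-1}(\xi)\cap \Slo_f$, so that everything reduces to understanding the fiber $\psi^{-1}(\xi)\subset\g$ of the full adjoint quotient $\psi$. The one piece of genuine geometric input I would establish is that $\psi^{-1}(\xi)$ is a \emph{finite} union of adjoint orbits of $\g$.

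To prove this finiteness I would use the Jordan decomposition. By the Chevalley restriction theorem the restriction of $\psi$ to a Cartan subalgebra is surjective onto $\g/\!/G$, so I may choose a semisimple element $s\in\g$ with $\psi(s)=\xi$. Since the $p_i$ generate $\C[\g]^G$ and these invariants separate semisimple conjugacy classes, the fiber $\psi^{-1}(\xi)$ consists exactly of those $x\in\g$ whose semisimple part is $G$-conjugate to $s$. Conjugating the semisimple part to $s$ itself, I may write $x=s+n$ with $n$ nilpotent and $[s,n]=0$, that is, $n$ a nilpotent element of the centralizer $\g^{s}$. Hence the $G$-orbits contained in $\psi^{-1}(\xi)$ correspond bijectively to the nilpotent $G^{s}$-orbits in the reductive Lie algebra $\g^{s}$, of which there are only finitely many; the finiteness of orbits in $\psi^{-1}(\xi)$ follows.

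With the finiteness in hand I would intersect with the slice. Writing $\psi^{-1}(\xi)=O_1\cup\cdots\cup O_m$ as a finite union of adjoint orbits, distributivity gives $\psi_f^{-1}(\xi)=\bigcup_{i=1}^m (O_i\cap\Slo_f)$. By the fact recalled above (Gan--Ginzburg), each nonempty intersection $O_i\cap\Slo_f$ is precisely a symplectic leaf of $\Slo_f$; therefore $\psi_f^{-1}(\xi)$ is a finite union of symplectic leaves, which is the first assertion.

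For the final clause I would invoke Premet's irreducibility of the fibers, recalled just before the statement. The fiber $\psi_f^{-1}(\xi)$ is closed in $\Slo_f$ and irreducible, and it is covered by the finitely many closed subsets $\overline{O_i\cap\Slo_f}$. An irreducible space cannot be a union of finitely many proper closed subsets, so $\psi_f^{-1}(\xi)=\overline{O_j\cap\Slo_f}$ for a single index $j$; thus the fiber is the closure of one symplectic leaf, as claimed. The only step carrying real content is the finiteness of adjoint orbits in $\psi^{-1}(\xi)$ via the Jordan decomposition; the remainder is assembly from the quoted facts.
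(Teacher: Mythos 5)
Your proof is correct and follows essentially the same route as the paper: Jordan decomposition reduces the finiteness of adjoint orbits in $\psi^{-1}(\xi)$ to the finiteness of nilpotent orbits in the reductive centralizer of a semisimple element, after which one intersects with $\Slo_f$ and invokes irreducibility of $\psi_f^{-1}(\xi)$ to conclude it is a single leaf closure. The only difference is organizational: where you fix a semisimple representative $s$ with $\psi(s)=\xi$ and parametrize the $G$-orbits in the fiber by nilpotent $G^{s}$-orbits in $\g^{s}$, the paper compares two arbitrary points of the fiber via Weyl-group conjugacy of their semisimple parts (using a Tits lifting) before applying the same finiteness of nilpotent orbits in the centralizer.
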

%\commentA{Do we keep this lemma? Reference?}
\begin{proof}
%We first show that $\psi^{-1}(\xi)$ is a finite union of symplectic leaves 
%closures 
%of $\g$. 
The proof is standard, we recall it for the convenience 
of the reader.  

We first prove the statement for the morphism $\psi$. 
Let $x \in \psi^{-1}(\xi)$, and write $x=x_s+x_n$ its Jordan decomposition. 
Let $\g=\n_- \oplus\h+\n_+$ be a triangular 
decomposition of $\g$. One can assume that $x_s \in \h$, and that 
$x_n\in \n_+$ since $x_n \in \g^{x_s}$. 
Let now $y \in \psi^{-1}(\xi)$. Then $p_i(y)=p_i(x)$ for $i=1,\ldots,\ell$, 
and so $y_s$ is conjugate to $x_s$ by an element $s$ 
of the Weyl group $W(\g,\h)$ of $(\g,\h)$. 
Let $\tilde s$ be a Tits lifting of $s$ in $G$,
%Choose a representative 
%$\tilde s$ of $s$ in $G$
 so that 
$y=\tilde s(x_s + {\tilde s}^{-1} y_n)$. Since $W(\g,\h)$ is finite, 
it is sufficient to show that
there are only finitely many possible choices of $y_n$ up to 
conjugation by the centralizer $G^{y_s}$ of $y_s$ in  $G$.
However,
since $y_n\in \g^{y_s} \cap \mc{N}$ 
and
the set 
$\g^{y_s} \cap \mc{N}$ consists of a finite union of $(G^{y_s})^\circ$-orbits,
where $(G^{y_s})^\circ$ is the identity connected component 
subgroup of $G^{y_s}$, 
%the connected subgroup of $G$ with Lie algebra $\g^{y_s}$,
the assertion follows.
In conclusion, $ \psi^{-1}(\xi)$ is a finite union of $G$-orbits of ${\g}$, 
that is,  a finite union of symplectic leaves of $\g$. 
Since $ \psi^{-1}(\xi)$ is irreducible and $G$-invariant, we deduce 
that $ \psi^{-1}(\xi)$ is the closure of %some $G$-orbit, that is, the closure 
 some symplectic leaf. 

Next, $\psi_f^{-1}(\xi)=\psi^{-1}(\xi) \cap \Slo_f$ and the symplectic 
leaves of $\Slo_f$ are the intersections of adjoint orbits of $\g$ with $\Slo_f$. 
So $\psi_f^{-1}(\xi)$ is a finite union of symplectic leaves, too. 
As $\psi_f^{-1}(\xi)$ is irreducible \cite{Gin03}, 
$\psi_f^{-1}(\xi)$ is a symplectic leaf closure.
\end{proof}

By Kostant \cite{Kos78}, 
$$\psi_f^{-1}(0)=\Slo_f \cap \mc{N},$$
with $\mc{N}$ the nilpotent cone of $\g$. 
%In particular,  $\psi_f^{-1}(0)$ has finitely many symplectic leaves. Moreover, 
%as observed in \cite{AKM}, 

%Since $\mc{N}$ 
%is normal with rational singularities and $\Slo_f$ is a transversal slice 
%to $\mc{N}=\overline{\O_{reg}}$, the intersection $\Slo_f \cap \mc{N}$ 
%enjoys the same properties as $\mc{N}$ (cf.~e.g.~\cite[\S3.3]{FuJutLev17}). 
%Hence $\Slo_f \cap \mc{N}$ 
%is normal with rational singularities too 
%(see also \cite{AKM}, where the argument is slightly different). 
%Here, $\O_{reg}$ denotes the regular nilpotent orbit in $\g$. 

\begin{Pro} \label{Pro:zero-fiber}
Let $n \in \Z_{\ge 0}$. 
Then $(J_n \psi_f)^{-1}(0) =J_n( \psi_f^{-1}(0))$ is a reduced complete intersection,  
and it is irreducible. Moreover, $(J_\infty \psi_f)^{-1}(0) =J_\infty (\psi_f^{-1}(0))$ is irreducible and reduced.  
%and it is precisely the closure of some symplectic vertex  cores of $\J R_f$. 
%\item Let $z \in \mc{Z}(\J R_f)$. 
%Then $z$ is constant on $(J_\infty \psi_f)^{-1}(0)$. 
%\end{enumerate}
\end{Pro}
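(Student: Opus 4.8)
The plan is to reduce the statement to the finite jet schemes, where it becomes an instance of Musta\c{t}\u{a}'s theorem on jet schemes of complete intersections with rational singularities, and then to pass to the limit; the scheme identity $(J_n\psi_f)^{-1}(0)=J_n(\psi_f^{-1}(0))$ I would obtain functorially. Indeed, $\psi_f^{-1}(0)=\Slo_f\times_{\C^\ell}\{0\}$ as a scheme, and by the functorial descriptions in Definition~\ref{d:jet} both $J_n$ and $\J$ are right adjoints, so they commute with fibre products exactly as in \eqref{eq:product-jets}. Hence $J_n(\psi_f^{-1}(0))=J_n\Slo_f\times_{J_n\C^\ell}J_n(\{0\})$, and since $J_n(\{0\})$ is a reduced point mapping to the origin $\iota_n(0)$ of $J_n\C^\ell\cong\C^{(n+1)\ell}$, the right-hand side is precisely $(J_n\psi_f)^{-1}(0)$; the same applies verbatim to $\J$.

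Next I would record the geometry of the special fibre. By Kostant \cite{Kos78}, $\psi_f^{-1}(0)=\Slo_f\cap\mc{N}$. Since $\Slo_f=f+\g^{e}\cong\mathbb{A}^{r}$ with $r=\dim\g^{e}$ is smooth and $\psi_f$ is flat with all fibres of dimension $r-\ell$, the fibre $\Slo_f\cap\mc{N}$ is a complete intersection, cut out in $\Slo_f$ by $p_1,\dots,p_\ell$; it is irreducible and reduced by \cite{Pre02} (see also Lemma~\ref{Lem:fiber-of-psi-finite}). The essential input is that $\Slo_f\cap\mc{N}$ has rational singularities: the Springer resolution of $\mc{N}$ restricts to a symplectic resolution of the nilpotent Slodowy slice, so $\Slo_f\cap\mc{N}$ has symplectic, hence rational, singularities.

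I would then apply Musta\c{t}\u{a}'s theorem \cite{Mus01}: for a locally complete intersection variety with rational singularities every jet scheme is irreducible and normal. Applied to $\Slo_f\cap\mc{N}$, this shows that each $J_n(\Slo_f\cap\mc{N})$ is irreducible and reduced. For the complete intersection claim, note that $J_n\Slo_f\cong\mathbb{A}^{(n+1)r}$ is smooth and that $J_n(\Slo_f\cap\mc{N})$ is defined in it by the $(n+1)\ell$ functions $T^{j}p_i$, $0\le j\le n$, $1\le i\le\ell$; its generic point lies over $(\Slo_f\cap\mc{N})_{reg}$, so by irreducibility its dimension is $(n+1)(r-\ell)$, whence the codimension equals the number of defining equations. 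The reducedness may alternatively be read off from this: a complete intersection is Cohen--Macaulay, so has no embedded component, and it is generically reduced because it is irreducible with generic point over the smooth locus.

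Finally I would pass to the limit. Irreducibility of $\J(\Slo_f\cap\mc{N})$ is immediate from Kolchin's Theorem~\ref{Th:Kolchin} and the irreducibility of $\Slo_f\cap\mc{N}$; reducedness follows from the finite level, since the coordinate ring $\C[\J(\Slo_f\cap\mc{N})]=\varinjlim_n J_n(\C[\Slo_f\cap\mc{N}])$ is a filtered colimit of the reduced rings just produced, hence reduced. The hard part is the rational-singularities input of the second paragraph: the fibre-product identity and the colimit argument are formal, and Musta\c{t}\u{a}'s and Kolchin's theorems are used as black boxes, so the real content is the construction of the symplectic (hence rational) resolution of $\Slo_f\cap\mc{N}$ from the Springer map, which is exactly what makes Musta\c{t}\u{a}'s theorem applicable.
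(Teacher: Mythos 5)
Your proposal is correct and follows essentially the same route as the paper's proof: identify $\psi_f^{-1}(0)$ with $\Slo_f\cap\mc{N}$ via Kostant, invoke the geometric properties of this variety (reduced, irreducible, complete intersection, rational singularities), apply Musta\c{t}\u{a}'s theorem to get irreducibility, reducedness and the complete-intersection property of the finite jet schemes, use functoriality of jets for the identity $(J_n\psi_f)^{-1}(0)=J_n(\psi_f^{-1}(0))$, and conclude at the infinite level by Kolchin's theorem together with the filtered-colimit argument for reducedness. The only divergences are in sourcing rather than in strategy: where the paper cites \cite{Gin08} for the normal/reduced/complete-intersection properties and \cite[Lemma 3.1.2.1]{AKM} for rational singularities, you obtain rational singularities from the restriction of the Springer resolution (a valid standard argument, though note that rational singularities presuppose normality, which your argument uses implicitly and the paper draws from \cite{Gin08}), and you unpack by hand --- via the dimension count and Cohen--Macaulayness --- the complete-intersection and reducedness consequences that the paper attributes to ``the main results of \cite{Mus01} and its consequences.''
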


\begin{proof} 
The variety $\Slo_f \cap \mc{N}$
is normal, reduced and is a complete intersection  \cite{Gin08},
with rational singularities \cite[Lemma 3.1.2.1]{AKM}.
As $\psi_f^{-1}(0)=\Slo_f \cap \mc{N}$, % is a reduced scheme, which is irreducible 
%and a complete intersection by the above discussion, 
it follows from the main results 
of \cite{Mus01} and its consequences that $J_n (\psi_f^{-1}(0))$ is also reduced, 
irreducible and a complete intersection for any $n\ge 0$. 
Now observe that $(J_n \psi_f)^{-1}(0) =J_n (\psi_f^{-1}(0))$ by the properties of 
jet schemes (cf.~Section \ref{sec:arc}). 
%since a fiber is a fiber product (cf.~Section \ref{sec:arc}). 
This proves the first part of the statement. 

Since $\psi_f^{-1}(0)$ is irreducible, $\J (\psi_f^{-1}(0))$ is irreducible 
by Theorem \ref{Th:Kolchin} 
and, from the above, we get that 
$(J_\infty \psi_f)^{-1}(0) =J_\infty( \psi_f^{-1}(0))$ is irreducible. 
Because all $J_n (\psi_f^{-1}(0))$ are reduced, $J_\infty( \psi_f^{-1}(0))$ 
is reduced, too. 
\end{proof}

Next, we wish to prove that the other fibers of $\J \psi_f$ 
are also reduced and irreducible. 
To this end, we use ideas of \cite[\S\S5.3 and 5.4]{Pre02}. 

The Slodowy slice has a contracting $\C^*$-action. 
Recall briefly the construction. 
The embedding $\haru_\C\{e,h,f\} 
\cong \sl_2 \hookrightarrow \g$ 
exponentiates to a homomorphism 
\hbox{$SL_2 \to {G}$}. By restriction to the one-dimensional 
torus consisting of diagonal matrices, we obtain 
a one-parameter subgroup $\tilde{\rho} \colon \C^* \to {G}$. 
Thus $\tilde{\rho}(t)x=t^{2j}x$ for any $x \in\g_j=\{y \in \g \mid 
[h,y]=2 j y\}$. 
For $t\in\C^*$ and $x\in\g$, set
\begin{eqnarray} \label{eq:rho}
{\rho}(t)x := t^{2}\tilde{\rho}(t)x.
\end{eqnarray}
So, for any $x\in\g_j$, ${\rho}(t)x =t^{2+2j}x$. In particular, 
${\rho}(t)f=f$ and 
the $\C^*$-action of ${\rho}$ stabilizes $\Slo_f$. 
Moreover, it is contracting to $f$ on $\Slo_f$, that is, 
$$\lim_{t\to 0} {\rho}(t)(f+x)=f$$ for any $x\in\g^{e}$. 
The $\C^*$-action $\rho$ induces a positive grading on $\Slo_f$, 
and so on $R_f =\C[\Slo_f] \cong \C[\g^{e}]$.  
%\commentA{I removed one sentence because the definition 
%of $(\Slo_f)_j$ was not correct: cannot be defined so simply.}

%\xout{\red{Namely, let $(R_f)_i$ be the set of all $\chi \in R_f$ such that 
%$\chi$ vanishes on $(\Slo_f)_j$, for $j\not= i$, with}}
%%$$\red{(\Slo_f)_j := \{x \in \Slo_f \mid \rho(t)x= t^{2j} x\}.}$$

Let $n \in \Z_{\ge 0} \cup \{\infty\}$. 
Similarly, we define a contracting $\C^*$-action on $J_n \Slo_f$ 
and a positive grading on $J_n R_f$ as follows. 

Let $x^{1},\ldots,x^{r}$ be a basis of $\g^{e}$ so that 
$$J_n R_f \cong J_n \C[\g^{e}] 
\cong \Spec \C [ x^i_{(-j-1)}  \, ;\,  
i=1,\ldots,r,\, j=0,\ldots,n].$$ 
One can assume that the $x^{i}$'s are Slodowy homogeneous. 
One defines a grading on $J_n R_f$ by setting 
$$\deg  x^i_{(- j -1)} = \deg x^{i} + j.$$ 
Since the grading is positive, it gives a contracting $\C^*$-action 
on  $J_n \Slo_f$. 
Indeed, consider the morphism 
$\C[J_n \Slo_f ] \to \C[J_n \Slo_f ] \otimes \C[t,t^{-1}]$, 
$f \mapsto f \otimes t^{\deg f}$, for homogeneous $f$. 
Its comomorphism 
induces a  
$\C^*$-action 
$$\mu_n \colon \C^* \times J_n \Slo_f \to J_n \Slo_f$$which is 
contracting since $\deg f\ge 0$ for any homogeneous $f$. 

The above grading gives an increasing filtration on 
$J_n R_f$ in an obvious way: 
$$\mathscr{F}_{p} (J_n R_f):= \oplus_{j \le p} (J_n R_f)_j,\qquad p \ge 0.$$
Given a quotient $M$ of $J_n R_f$, we define 
a filtration $(\mathscr{F}_{p} M)_p$ of $M$ by setting 
$$\mathscr{F}_{p} M := 
\tau_n (\mathscr{F}_p (J_n R_f)),$$ where $\tau_n$ is the canonical 
quotient morphism $\tau_n \colon J_n R_f \to M$. 
We denote by ${\gr } M$ } the corresponding graded space.

For $M$ a subspace of $J_n R_f$ denote by ${\rm gr} M$   
the homogeneous 
subspace of $J_n R_f$ with the property that 
$g \in {\rm gr} M \cap (J_n R_f)_p$ 
if and only if there is $\tilde g \in M$ such that 
$\tilde g -g \in \mathscr{F}_{p-1} (J_n R_f)$. 
Obviously the subspace ${\rm gr} M$ 
is invariant for the $\C^*$-action $\mu_n$. 
If $M$ is an ideal of $J_n R_f$ then {${\rm gr} M$ 
is an ideal of $\gr J_n R_f$}.

For $f \in \C[J_n \g]$ we denote by $\overline{f}$ its restriction to $J_n \Slo_f$. 
Then for $\xi=(\xi_i^{(j)} \mid i=1,\ldots,\ell, j= 0,\ldots,n) 
\in  J_n \C^\ell$, $(J_n  \psi_f )^{-1}(\xi)$ 
is the set of common zeroes of the ideal 
$$\mathscr{I}_{n,\xi} := \left( \overline{T^j p_i} - \xi_i^{(j)} \mid i=1,\ldots,\ell, j 
=0,\ldots,n\right).$$
{Here, note that $J_n \psi_f$ is the morphism: 
$$J_n \psi_f \colon \J \Slo_f \to J_n \C^\ell, \quad 
x \mapsto (T^j p_i  (x),i=1,\ldots,\ell,  j= 0,\ldots,n).$$}

%Let $\mf{a}_\xi$ be the subspace of $\J R_f$ generated by  
%$p_i^{(j)}|_{\J\Slo_f}  - \xi_i^{(j)}$, $i=1,\ldots,\ell, j \ge 0$. 
%Here, we write $p_i^{(j)}$ for its restriction to $J_m\Slo_f$. 
%Note that $\mf{a}_\xi (\J  R_f )= \mathscr{I}_{\xi}$, and that 
%$\sqrt{\mathscr{I}_{\xi}}$ is the defining ideal of $(\J  \psi_f )^{-1}(\xi)$. 

%\begin{Rem}
%{\color{red}I don't know whether $\mathscr{I}_\xi$ is the defining ideal of $(\J \psi_f )^{-1}(\xi)$ 
%because I don't know whether $(\J \psi_f )^{-1}(\xi)$  is reduced! 
%It is ok for if $\xi \in \C^\ell$ but in general I don't know!
%
%And it seems that this is used in Premet's arguments (part 4 of the proof 
%of Theorem 4.2) although I don't see how. 
%
%If the below proof is correct I can get a posteriori (see the lemma below). 
%Something strange??? What do you think??}
%\end{Rem}

\begin{Lem} \label{Lem:reduced}
Let $n \in\Z_{\ge 0} \cup\{\infty\}$  
and $\xi \in J_n \C^\ell$. 
Then the fiber $(J_n \psi_f )^{-1}(\xi)$ is reduced and irreducible. 
\end{Lem}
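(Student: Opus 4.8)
The plan is to run a graded degeneration argument, using the contracting $\C^*$-action $\mu_n$ to reduce the general fiber to the zero fiber $(J_n\psi_f)^{-1}(0)=J_n(\Slo_f\cap\mc N)$, which is already known to be reduced and irreducible by Proposition~\ref{Pro:zero-fiber}. Throughout I write $S=J_nR_f$, a polynomial ring carrying the positive grading $\deg x^i_{(-j-1)}=\deg x^i+j$ introduced above, with associated increasing filtration $(\mathscr{F}_pS)_p$; for a subspace $M\subseteq S$, let $\gr M$ denote its space of leading (top-degree) terms. First I would record the degrees of the generators. Since each $p_i$ is $G$-invariant and homogeneous of degree $d_i$ and $\tilde\rho(t)\in G$, the relation $\rho(t)=t^2\tilde\rho(t)$ gives $\overline{p_i}(\rho(t)\cdot y)=t^{2d_i}\overline{p_i}(y)$, so $\overline{p_i}$ is homogeneous of degree $2d_i$ for the Slodowy grading; as $T$ raises degree by $1$, the element $\overline{T^jp_i}$ is homogeneous of degree $2d_i+j>0$ and nonzero. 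Consequently the leading term of each generator $\overline{T^jp_i}-\xi_i^{(j)}$ of $\mathscr{I}_{n,\xi}$ is exactly $\overline{T^jp_i}$, the corresponding generator of $\mathscr{I}_{n,0}$. Hence $\mathscr{I}_{n,0}\subseteq\gr\mathscr{I}_{n,\xi}$, and there is a surjection of graded rings
$$S/\mathscr{I}_{n,0}=\C[(J_n\psi_f)^{-1}(0)]\twoheadrightarrow S/\gr\mathscr{I}_{n,\xi}=\gr\bigl(S/\mathscr{I}_{n,\xi}\bigr).$$

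The heart of the argument is to show this surjection is an isomorphism, that is, $\gr\mathscr{I}_{n,\xi}=\mathscr{I}_{n,0}$. By Proposition~\ref{Pro:zero-fiber} the source is a reduced integral domain of dimension $(n+1)(r-\ell)$, being a complete intersection cut out by $(n+1)\ell$ equations in $J_n\Slo_f\cong\mathbb{A}^{(n+1)r}$. On the other hand $(J_n\psi_f)^{-1}(\xi)$ is cut out in $J_n\Slo_f$ by the same number of functions $\overline{T^jp_i}-\xi_i^{(j)}$, so Krull's height theorem forces every component to have dimension at least $(n+1)(r-\ell)$, while the displayed surjection forces the opposite inequality; together with the surjectivity of $J_n\psi_f$ (which follows from the faithful flatness of $\psi_f$ and the contracting action $\mu_n$) this yields $\dim S/\mathscr{I}_{n,\xi}=(n+1)(r-\ell)$, hence $\dim\gr(S/\mathscr{I}_{n,\xi})=(n+1)(r-\ell)$ as well. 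A surjection from a finitely generated integral domain onto a ring of the same Krull dimension has trivial kernel, so the displayed map is an isomorphism and $\gr(S/\mathscr{I}_{n,\xi})$ is a reduced domain. The standard leading-term principle then transfers these properties upward: if the associated graded of a filtered algebra is a reduced domain, so is the algebra. Thus $S/\mathscr{I}_{n,\xi}$ is reduced and integral, which is exactly the assertion that $(J_n\psi_f)^{-1}(\xi)$ is reduced and irreducible.

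For $n=\infty$ I would deduce the statement from the finite levels. Any $u\in\mathscr{I}_{\infty,\xi}$, together with a witnessing expression in finitely many generators and coefficients, involves only variables and generators of arc-index at most some $M$, so $u\in\mathscr{I}_{M,\xi}$ and its leading term lies in $\gr\mathscr{I}_{M,\xi}=\mathscr{I}_{M,0}\subseteq\mathscr{I}_{\infty,0}$. Hence $\gr\mathscr{I}_{\infty,\xi}=\mathscr{I}_{\infty,0}$ and
$$\gr\bigl(J_\infty R_f/\mathscr{I}_{\infty,\xi}\bigr)=\C[J_\infty(\Slo_f\cap\mc N)],$$
which is a reduced domain by Proposition~\ref{Pro:zero-fiber} together with Kolchin's Theorem~\ref{Th:Kolchin}. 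The same leading-term principle then shows $J_\infty R_f/\mathscr{I}_{\infty,\xi}$ is reduced and integral, so $(J_\infty\psi_f)^{-1}(\xi)$ is reduced and irreducible.

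I expect the main obstacle to be the dimension bookkeeping that pins down the equality $\gr\mathscr{I}_{n,\xi}=\mathscr{I}_{n,0}$: one must know that the general fiber neither jumps in dimension nor becomes empty, and this is precisely where the faithful flatness of $\psi_f$ (hence the surjectivity of $J_n\psi_f$) and the complete-intersection structure of the zero fiber from Proposition~\ref{Pro:zero-fiber} both enter. The contracting action $\mu_n$ is what guarantees that the degeneration of the general fiber is governed exactly by the zero fiber, so that irreducibility and reducedness propagate from the special fiber to all fibers.
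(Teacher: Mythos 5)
Your proof is correct and takes essentially the same route as the paper's: equip $J_n R_f$ with the Slodowy grading, degenerate $\mathscr{I}_{n,\xi}$ to $\mathscr{I}_{n,0}$, and transfer reducedness and integrality from $J_n R_f/\mathscr{I}_{n,0}$ (known from Proposition \ref{Pro:zero-fiber}) through the associated graded by the standard symbol argument. The difference is one of rigor: the paper disposes of the key identity $\gr \mathscr{I}_{n,\xi}=\mathscr{I}_{n,0}$ with the word ``Clearly'', whereas you actually prove it --- the containment $\mathscr{I}_{n,0}\subseteq \gr\mathscr{I}_{n,\xi}$ from the positivity of the degrees $2d_i+j$, and the equality from Krull's height theorem combined with the complete-intersection dimension count and the fact that a surjection from a finitely generated domain onto a ring of the same Krull dimension is injective --- and you also supply the reduction of the $n=\infty$ case to finite levels; since the equality of a graded ideal with the ideal of leading terms of its generators is exactly the point where an argument is needed, this is a genuine improvement. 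One small repair: the non-emptiness (equivalently, surjectivity onto the relevant fiber) of $J_n\psi_f$ does not follow from faithful flatness of $\psi_f$ alone, since jets need not lift along flat surjective morphisms (consider $t\mapsto t^2$ on $\mathbb{A}^1$); the correct justification, recorded in Section \ref{sec:main-result} of the paper, is that $\psi_f$ restricted to $\Slo_f\cap\g_{reg}$ is smooth and surjective by Kostant, and jets do lift along smooth surjective morphisms.
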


\begin{proof}
Clearly ${\rm gr} \mathscr{I}_{n,\xi} =\mathscr{I}_{n,0}$.
%\commentA{Is it so clear?}
%Let $C$ be an irreducible component of $(\J \psi_f )^{-1}(\xi)$ so that 
%$\sqrt{\mathscr{I}_\xi} \subset I_C$.  
%Then we get by (2) and (5), 
%\begin{align*}
%\mf{a}_0 (\J R_f ) \subseteq \gr_\mu (\mf{a}_\xi \J R_f) &
%\subseteq \gr_\mu \mathscr{I}_\xi \subseteq \gr_\mu \sqrt{\mathscr{I}_\xi}  \\
%&\subseteq \gr_\mu I_C \subset \sqrt{\gr_\mu I_C} = I_{(\J \psi_f)^{-1}(0)} = 
%\mf{a}_0 (\J R_f)
%\end{align*}
%since $\J \psi_f)^{-1}(0)$ is reduced and irreducible by Proposition \ref{Pro:zero-fiber}. 
%Hence 
%$$\gr_\mu \mathscr{I}_\xi = \gr_\mu \sqrt{\mathscr{I}_\xi}  = \gr_\mu I_C$$ 
%and so 
%$\mathscr{I}_\xi =\sqrt{\mathscr{I}_\xi} = I_C$
%by (1). 
%We deduce that $\mathscr{I}_\xi$ is radical and prime, which completes the proof.

Let $a \in J_n R_f/\mathscr{I}_{n,\xi}$ and suppose that $a^k=0$ 
for some $k \in \Z_{\ge 0}$.
Then $\sigma_n(a)^k=0$, where $\sigma_n$ 
is the symbol of $a$ in ${\rm gr}(J_n R_f / \mathscr{I}_{n,\xi}) 
=J_n R_f / \mathscr{I}_{n,0}$.
As $\mathscr{I}_{n,0}$ is radical, 
$\sigma_n(a)=0$, and hence $a=0$. 
This proves that $\mathscr{I}_{n,\xi}$ is radical. 

Similarly, $J_n R_f/\mathscr{I}_{n,\xi}$
is a domain since ${\rm gr} (J_n R_f/\mathscr{I}_{n,\xi})
=J_n R_f/\mathscr{I}_{n,0}$ is.
Hence $\mathscr{I}_{n,\xi}$ is prime.
\end{proof}

%Note that the Poisson bracket on $\Slo_f$ is algebraic.  
{The Poisson bracket on $\g$ is algebraic 
since all adjoint orbits are open in their closure. 
From the inclusion $\overline{G.x \cap \Slo_f} \subset \overline{G.x}\cap \Slo_f$, 
 for $x \in \Slo_f$, 
we deduce that the symplectic leaf $G.x \cap \Slo_f$ of $\Slo_f$ 
is locally closed, and hence 
the Poisson bracket on $\Slo_f$ is algebraic. 
Indeed, $\overline{G.x}\cap \Slo_f$ is a 
finite union of symplectic leaves of $\Slo_f$ and so \cite[Proposition 3.7]{Brown-Gordon} 
applies. 
%Next, for $x \in \Slo_f$, $\overline{G.x}\cap \Slo_f$ is a 
%finite union of symplectic leaves of $\Slo_f$ since 
%$\overline{G.x}$ is a finite union of adjoint orbits, 
%that is, of symplectic leaves in $\g$. 
%So all symplectic leaves of $\Slo_f$ in $\overline{G.x}\cap \Slo_f$ are locally closed. 
%In particular, $G.x \cap \Slo_f$ is locally closed 
%since $\overline{G.x \cap \Slo_f} \subset \overline{G.x}\cap \Slo_f$ and so 
%the Poisson bracket on $\Slo_f$ is algebraic. 
As a result, the hypothesis of Section \ref{sec:chiral-symplectic-leaves}
are satisfied. 

%For $X \subset J_n \Slo_f$, set $I_X= \{f \in J_n R_f \mid 
%\forall x \in X, \, f(x)= 0 \}$ and define 
%$$X^{\gr} = \{v \in J_n \Slo_f \mid f(v)= 0 \text{ for all } 
%f \in \gr I_X\}.$$
%Following \cite{Pre02}, we call it the {\em $\mu$-cone associated with 
%$X$}. Moreover {arguing as in \cite[\S5.3 (5)]{Pre02}, we get 
%that $\dim X= \dim X^{\gr}$.}%\commentA{Is it ok???}
%

\begin{Pro} \label{Pro:fiber-cores}
%\begin{enumerate}
%\item 
For any $n \in \Z_{\ge 0}$, the fiber $(J_n \psi_f)^{-1}(0)$ 
is the closure of some $n$-chiral Poisson core in $J_n {\Slo_f}$, 
and $(\J \psi_f)^{-1}(0)$ 
is the closure of some chiral Poisson core in $\J  {\Slo_f}$. 
%\end{enumerate}
\end{Pro}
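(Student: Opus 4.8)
The plan is to realize the zero fiber downstairs as a symplectic core closure and then transport this identification through the jet/arc functor using the machinery of Section~\ref{sec:chiral-symplectic-leaves}. First I would invoke Kostant's identification $\psi_f^{-1}(0)=\Slo_f\cap\mc{N}$ together with Lemma~\ref{Lem:fiber-of-psi-finite}, which shows that $\psi_f^{-1}(0)$ is a symplectic leaf closure in $\Slo_f$. Since we have just verified that the Poisson bracket on $\Slo_f$ is algebraic, the hypotheses of Section~\ref{sec:chiral-symplectic-leaves} are satisfied, so the symplectic leaves coincide with the symplectic cores. Hence, choosing any point $x\in\Slo_f$ lying on the dense open leaf of $\psi_f^{-1}(0)$, we obtain
$$Y:=\psi_f^{-1}(0)=\overline{\Core_{\Slo_f}(x)}.$$

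Next I would feed this $Y$ into the core-transport results. The crucial geometric input is Proposition~\ref{Pro:zero-fiber}, which supplies both the identification $(J_n\psi_f)^{-1}(0)=J_n Y$ and the irreducibility of $J_n Y$ for every $n\in\Z_{\ge 0}\cup\{\infty\}$. Granting this, Corollary~\ref{Co:equality-cores} applies directly: for finite $n$ the irreducibility of $J_n Y$ gives
$$(J_n\psi_f)^{-1}(0)=J_n Y=\overline{\Core_{J_n\Slo_f}(x_n)},$$
while for $n=\infty$ the same corollary yields $\J Y=\overline{\Core_{\J\Slo_f}(x_\infty)}$ unconditionally, with Kolchin's Theorem~\ref{Th:Kolchin} furnishing irreducibility in that case.

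Finally I would translate these statements about symplectic cores into the language of chiral Poisson cores used in the proposition. By Corollary~\ref{Co:leaf} (for finite $n$) and Proposition~\ref{Pro:equality-cores}(2) (for $n=\infty$), the defining ideal of $\overline{\Core_{J_n\Slo_f}(x_n)}$ is precisely the $n$-chiral Poisson core $\P_{J_n R_f}(\m_{x_n})$, and likewise $\J Y=\V(\P_{\J R_f}(\m_{x_\infty}))$. Thus $(J_n\psi_f)^{-1}(0)=\V(\P_{J_n R_f}(\m_{x_n}))$ and $(\J\psi_f)^{-1}(0)=\V(\P_{\J R_f}(\m_{x_\infty}))$, which is exactly the assertion that each zero fiber is the closure of an $n$-chiral (resp.\ chiral) Poisson core; reducedness and irreducibility are automatic since the Poisson cores $\P_{J_n R_f}(\m_{x_n})$ are radical and prime by Lemma~\ref{Lem:defining-ideal}.

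I expect essentially all the genuine difficulty to be already absorbed into Proposition~\ref{Pro:zero-fiber}: the delicate point is the irreducibility and reducedness of $J_n(\Slo_f\cap\mc{N})$, resting on the normality, complete-intersection and rational-singularity properties of $\Slo_f\cap\mc{N}$ and on Musta\c{t}\u{a}'s theorem. Once that irreducibility is in hand, the only remaining thing to watch is that the hypotheses of Section~\ref{sec:chiral-symplectic-leaves}—namely algebraicity of the Poisson structure on $\Slo_f$—are genuinely met, so that the equalities of cores and of their defining Poisson-core ideals may be applied without modification.
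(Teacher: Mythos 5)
Your proposal is correct and follows essentially the same route as the paper: the paper's own proof consists precisely of citing Proposition~\ref{Pro:zero-fiber} for the irreducibility of $(J_n\psi_f)^{-1}(0)=J_n(\psi_f^{-1}(0))$ and Corollary~\ref{Co:equality-cores} together with the fact (Lemma~\ref{Lem:fiber-of-psi-finite}, Kostant) that $\psi_f^{-1}(0)$ is a symplectic leaf closure; you have simply made explicit the algebraicity check and the translation to Poisson-core ideals via Corollary~\ref{Co:leaf} and Proposition~\ref{Pro:equality-cores}, which the paper leaves implicit inside Corollary~\ref{Co:equality-cores}. (Only a minor citation slip: radicality and primality of the $n$-chiral Poisson cores is Lemma~\ref{Lem:properties-m-cores}, not Lemma~\ref{Lem:defining-ideal}, which concerns the vertex Poisson algebra setting.)
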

\begin{proof} 
%We have $\psi_f^{-1}(0) =\mc{N}\cap \Slo_f$ 
%and so $\psi_f^{-1}(0)$ has finitely many symplectic leaves. 
%Since $\psi_f^{-1}(0)$ is irreducible and reduced, 
%$\psi_f^{-1}(0)$ is the closure of some symplectic core 
%in $\Slo_f$. On the other hand, as already observed, 
%$\mc{N} \cap \Slo_f$ is a compete intersection with rational 
%singularities. Hence by the main result of \cite{Mus01}, 
{By Proposition \ref{Pro:zero-fiber}, }
$J_n (\psi_f^{-1}(0) ) \cong (J_n \psi_f)^{-1}(0)$ is irreducible for any $n \in \Z_{\ge 0}$. 
The statement follows from Corollary \ref{Co:equality-cores} 
because $\psi_f^{-1}(0)$ is the closure of some 
symplectic leaf. 
\end{proof}

\begin{Th} \label{Th:constant-fiber}
Let $z$ be in the vertex Poisson center of $\J R_f$, and 
$\xi \in \J (\g/\!/ G)$. Then $z$ is constant on $(\J \psi_f)^{-1}(\xi)$.
\end{Th}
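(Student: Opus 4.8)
The plan is to deduce the statement from Proposition \ref{Pro:Poisson_center} by showing that each fiber $(\J\psi_f)^{-1}(\xi)$ is the closure of a \emph{single} chiral symplectic core. Indeed, once we know $(\J\psi_f)^{-1}(\xi)=\overline{\Core_{\J X}(x)}$ for some $x$ (where I write $X:=\Slo_f$), Proposition \ref{Pro:Poisson_center} gives at once that any $z\in\Zent(\J R_f)$ is constant on it. By Lemma \ref{Lem:reduced} the fiber is reduced and irreducible, so it is enough to produce one point $x$ of the fiber whose core is dense, i.e.\ whose core closure has the same dimension as the fiber.

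First I would record that the defining ideal $\mathscr{I}_{\infty,\xi}$, generated by the $\overline{T^j p_i}-\xi_i^{(j)}$, is chiral Poisson. The $p_i$ are $G$-invariant, hence Poisson-central on $\g^*$, so their restrictions $\overline{p_i}$ lie in the Poisson centre of $R_f$ (centrality is preserved under the Hamiltonian reduction of \cite{GanGin02}), and therefore $\overline{T^j p_i}=T^j\overline{p_i}\in\Zent(\J R_f)$ by \eqref{eq:PVA-1}. Consequently every $a_{(k)}$, $k\ge 0$, annihilates the generators of $\mathscr{I}_{n,\xi}$, so each fiber $(J_n\psi_f)^{-1}(\xi)$ is $n$-chiral Poisson, hence a union of $n$-chiral symplectic cores; in particular $\overline{\Core_{J_n X}(y)}\subseteq (J_n\psi_f)^{-1}(\xi)$ for every point $y$ of the fiber.

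Next comes the dimension count. Using the flat degeneration ${\rm gr}\,\mathscr{I}_{n,\xi}=\mathscr{I}_{n,0}$ together with Proposition \ref{Pro:zero-fiber}, where $(J_n\psi_f)^{-1}(0)=J_n(\Slo_f\cap\mc{N})$ is a complete intersection of dimension $(n+1)(r-\ell)$ with $r=\dim\g^{e}$, I obtain that $(J_n\psi_f)^{-1}(\xi)$ is irreducible of dimension $(n+1)(r-\ell)$. For a generic point $y$ of this fiber, $\pi_{n,0}(y)$ lies in the open dense symplectic leaf of the irreducible variety $\psi_f^{-1}(\xi^{(0)})$, where $\xi^{(0)}:=(\xi_i^{(0)})_{1\le i\le\ell}\in\g/\!/G$ (Lemma \ref{Lem:fiber-of-psi-finite}); that leaf has dimension $r-\ell$, so ${\rm rank}\,\mathscr{M}_0(\pi_{n,0}(y))=r-\ell$, and by Lemma \ref{Lem:crucial} the matrix $\mathscr{M}_n(y)$ has rank $(n+1)(r-\ell)$. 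Applying Lemma \ref{Lem:rank_der} — in the mild extension, valid by the same proof, that any reduced $n$-chiral Poisson subscheme through a point $y$ has dimension at least ${\rm rank}\,\mathscr{M}_n(y)$ — to the reduced $n$-chiral Poisson subscheme $\overline{\Core_{J_n X}(y)}$ gives $\dim\overline{\Core_{J_n X}(y)}\ge (n+1)(r-\ell)$. Being a closed subset of the irreducible fiber of the same dimension, it equals it: $\overline{\Core_{J_n X}(y)}=(J_n\psi_f)^{-1}(\xi)$ for every $n$. Choosing $x\in(\J\psi_f)^{-1}(\xi)$ generic, so that each $\pi_{\infty,n}(x)$ is generic in the level-$n$ fiber, and passing to the limit exactly as in the proof of Proposition \ref{Pro:equality-cores}(2) via Kolchin's Theorem \ref{Th:Kolchin}, yields $\overline{\Core_{\J X}(x)}=(\J\psi_f)^{-1}(\xi)$, whereupon Proposition \ref{Pro:Poisson_center} concludes.

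I expect the main obstacle to be precisely the assertion that the fiber is a single core closure for a \emph{general}, non-constant arc $\xi$. For constant arcs one could invoke Corollary \ref{Co:equality-cores} directly, as Proposition \ref{Pro:fiber-cores} does for $\xi=0$; but a non-constant fiber is not the arc space of a finite-dimensional symplectic leaf closure, so that shortcut is unavailable and one is forced into the rank/dimension estimate above. The delicate points there are the extension of Lemma \ref{Lem:rank_der} to non-maximal rank, the verification that a generic point of the fiber projects into the open leaf of $\psi_f^{-1}(\xi^{(0)})$ (which uses the generic smoothness and flatness of $\psi_f$ to lift jets of the base), and the $n\to\infty$ passage identifying the chiral Poisson core at the arc level with the inverse limit of its truncations.
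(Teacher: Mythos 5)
Your proposal is correct in substance, but it takes a genuinely different route from the paper. The paper establishes the core-closure property only for the zero fiber (Proposition \ref{Pro:fiber-cores}, via Proposition \ref{Pro:zero-fiber} and Corollary \ref{Co:equality-cores}), deduces from Proposition \ref{Pro:Poisson_center} that central elements are constant on $(\J\psi_f)^{-1}(0)$ --- equivalently that $\mc{Z}(\J R_f/\mathscr{I}_{\infty,0})\cong\C$ --- and then handles arbitrary $\xi$ by a short degeneration argument: since ${\rm gr}\,\mathscr{I}_{\infty,\xi}=\mathscr{I}_{\infty,0}$ for the Slodowy filtration (noted in the proof of Lemma \ref{Lem:reduced}), the symbol $\sigma_\infty(z)$ in ${\rm gr}(\J R_f/\mathscr{I}_{\infty,\xi})=\J R_f/\mathscr{I}_{\infty,0}$ is central there, hence constant, which forces $z$ itself to be constant modulo $\mathscr{I}_{\infty,\xi}$. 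You instead prove the stronger statement that \emph{every} fiber of $\J\psi_f$ is cut out by a chiral Poisson core, via rank/dimension estimates at each finite jet level and a passage to the limit. Your route buys a genuine strengthening of Proposition \ref{Pro:fiber-cores} (valid for non-constant arcs $\xi$, whose fibers are not arc spaces of any variety, so the paper's shortcut through Corollary \ref{Co:equality-cores} is indeed unavailable, as you observe); the paper's symbol trick buys brevity, disposing of all $\xi\neq 0$ in a few lines at the price of obtaining only the constancy statement.

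Two repairs are needed to make your argument airtight, both fixable with tools already in the paper. First, apply your extension of Lemma \ref{Lem:rank_der} to $\V(\P_{J_n R_f}(\m_y))$ rather than to $\overline{\Core_{J_n X}(y)}$: the latter is not known to be $n$-chiral Poisson (by Lemma \ref{Lem:properties-m-cores} (3), $\widetilde\V(\P_{J_n R_f}(\m_y))$ is only the \emph{smallest} $n$-chiral Poisson subscheme containing it), whereas the former is reduced, irreducible, $n$-chiral Poisson, contains $y$, and is contained in the fiber because $\mathscr{I}_{n,\xi}\subseteq\P_{J_n R_f}(\m_y)$; this substitution costs nothing in the endgame, since Proposition \ref{Pro:Poisson_center} gives constancy on $\V(\P_V(\m_x))$ directly. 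Second, Kolchin's Theorem \ref{Th:Kolchin} is not the right tool for the limit: irreducibility of $(\J\psi_f)^{-1}(\xi)$ is Lemma \ref{Lem:reduced}, and the $n\to\infty$ step should be run ideal-theoretically. Choose $x$ in the fiber with $\pi_{\infty,0}(x)$ in the open leaf of $\psi_f^{-1}(\xi^{(0)})$, which exists because $\J\psi_f$ restricted to $\J(\Slo_f\cap\g_{reg})$ is surjective; by Lemma \ref{Lem:crucial} the rank condition at every level $n$ is then automatic, being a condition on $\pi_{\infty,0}(x)$ alone. Since $\J R_f$ is the union of the $J_n R_f$, since $\P_{\J R_f}(\m_x)\cap J_n R_f\subseteq\P_{J_n R_f}(\m_{x_n})$, and since your level-$n$ result together with radicality (Lemma \ref{Lem:reduced}) gives $\P_{J_n R_f}(\m_{x_n})=\mathscr{I}_{n,\xi_n}$, one gets $\P_{\J R_f}(\m_x)\subseteq\mathscr{I}_{\infty,\xi}$, i.e. $(\J\psi_f)^{-1}(\xi)\subseteq\V(\P_{\J R_f}(\m_x))$, and Proposition \ref{Pro:Poisson_center} concludes.
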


\begin{proof}
%\red{Thanks to this remarks, Theorem 
%10.5 remains true even if one cannot repair Proposition 10.4 (2) !!! }
%One can also prove Theorem \ref{Th:constant-fiber} 
%using only Proposition \ref{Pro:Poisson_center} 
%and Proposition \ref{Pro:fiber-cores} (1). 
%Indeed, 
By Proposition \ref{Pro:Poisson_center} and Proposition \ref{Pro:fiber-cores},  
any element $z$ in the vertex Poisson center of $\J R_f$ is constant 
on $(\J \psi_f)^{-1}(0)$. 
Let now $\xi \in \J (\g/\!/ G)$. 
Then the symbol $\sigma_\infty(z) \in {\rm gr}  
(\J R_f/\mathscr{I}_{\infty,\xi})$ 
belongs to the center $\mc{Z}(\J R_f/\mathscr{I}_{\infty,0})$ 
of ${\rm gr} (\J R_f/\mathscr{I}_{\infty,0})$. 
However, $\mc{Z}(\J R_f/\mathscr{I}_{\infty,0})\cong \C$ 
by the $\xi=0$ case. 
Therefore
 $\sigma_\infty(z)$ is constant,
and this happens only if $z$ itself is constant in $ \J R_f/\mathscr{I}_{\infty,\xi}$, 
that is, $z$ is constant on $(\J \psi_f)^{-1}(\xi)$. 
%\commentA{Is the remark well-written?}
\end{proof}

\section{Vertex Poisson center and arc space of Slodowy slices} 
\label{sec:main-result}
%The adjoint quotient morphism $\psi_f \colon \Slo_f \to  \g /\!/G$ 
%induces an embedding from $\C[ \g /\!/G] \cong \C[\g]^G$, the Poisson center of $\C[\g]$, 
%into the Poisson algebra $R_f=\C[\Slo_f]$. 

%It is well-known \cite{Gan-Ginzburg} 
%that there is a certain nilpotent subalgebra 
%$\mf{m} \subset \g$ such that the following adjoint map, 
%$$ M \times \Slo_f \longrightarrow f+ \mf{m}^\perp,$$
%where $\m^\perp \subset \g$ is the orthogonal 
%complement to $\mf{m}$ in $\g$ with respect to the Killing 
%form $(~|~)$, and $M \subset G$ is the connected nilpotent subgroup 
%corresponding to $\mf{m}$, is an isomorphism of affine 
%varieties. 
%Hence, 
%$$R_f=\C[\Slo_f] \cong \C[f+ \mf{m}^\perp]^{\m},$$
%and so 
%\begin{align}\label{eq:BRST-Rf}
%R_f \cong (\C[\g]/I)^{\m},
%\end{align}
%where $I$ is the ideal of $\C[\g]$ generated 
%by $x- (f | x)$, $x \in \m$. 
%From this description of $R_f$, one can 
%see that the embedding 
%$\C[\g]^G \hookrightarrow R_f$ 
%induces an embedding 
%from $\C[\g]^G \cong \C[\g]^\g$ 
%into the Poisson center of $R_f$. 
%By a result of Premet-Ginzburg \cite[Question 5.1]{Premet07}, 
%this embedding is in fact an isomorphism. 
%Moreover, $R_f$ is free over its Poisson center 
%$Z(R_f) \cong \C[\g]^G$. 

%On the other hand, there is 
%an affine version of the isomorphism \eqref{eq:BRST-Rf} 
%for the vertex Poisson algebra on $\C[\J R_f]$  
%which says that (\cite[\S3.5]{Arakawa15a}):
The vertex Poisson algebra 
structure on $\C[\J R_f]$ can be described 
using cohomology of some dg-vertex Poisson algebras, 
which is a tensor product of functions over $\J\g$ 
with {\em fermionic-ghost} vertex Poisson super-algebra 
$\wedge^{\frac{\infty}{2}}(\mf{m})$, 
where $\m$ is a certain nilpotent algebra $\mf{m}$ 
(\cite[Theorem 4.6]{Ara09b}): 
$$\C[\J R_f]  \cong H^0(\C[\J \g] \otimes 
\wedge^{\frac{\infty}{2}}(\mf{m}),Q_{(0)}).$$
The canonical embedding 
$$\C[\J \g]   
\longrightarrow \C[\J \g] \otimes 
\wedge^{\frac{\infty}{2}}(\mf{m}), 
\quad f \longmapsto {f \otimes 1},$$ 
induces morphisms of vertex Poisson algebras, 
$$\mc{Z}(\C[\J \g] ) 
\longrightarrow \mc{Z}(\C[\J \g] \otimes 
\wedge^{\frac{\infty}{2}}(\mf{m})) 
\longrightarrow 
\mc{Z}(H^0(\C[\J \g] \otimes 
\wedge^{\frac{\infty}{2}}(\mf{m}),Q_{(0)})).$$
Hence we get a morphism of 
vertex Poisson algebras, 
$$\mc{Z}(\C[\J \g] ) 
\longrightarrow \mc{Z}(\C[\J R_f]) 
\subset  \C[\J R_f].$$ 
Note that this morphism corresponds to the restriction map. 

On the other hand, we have an isomorphism 
 \cite{RaiTau92,BeiDri96,EisFre01}: 
$\J(\g/\! / G) \cong \J \g / \!/ \J G$, where 
$\J \g / \!/ \J G= \Spec \C[\J \g]^{\J G}$. 
{In other words, the infinite jet scheme   
$\J G$ acts on $\J \g$ and 
$\C[\J \g]^{\J G}$ is the polynomial 
ring $ 
\C[\J (\g/\! / G)] = \C[T^j p_i, \, i=1,\ldots,\ell, j\ge 0]$. 
}
Therefore, we get the 
following isomorphisms: 
$$\mc{Z}(\C[\J \g] ) = \C[ \J \g / \!/ \J G] \cong \C[\J(\g/\! / G)].$$ 
So the above morphism 
from  $\mc{Z}(\C[\J \g] )$ to $\C[\J R_f]$ 
is nothing but the comorphism,  
$$(\J \psi_f)^* \colon 
\J(\g/\! / G) \to \J R_f,$$ 
of $\J \psi_f$. 

The map 
$(\J \psi_f)^*$ is an embedding.
Indeed,
let $\g_{reg}$ be the set of regular elements of $\g$, 
that is, those elements whose centralizer has minimal dimension $\ell$. 
Since the restriction of the morphism $\psi_f$ 
to $\Slo_f \cap \g_{reg}$ is smooth and surjective 
(see \cite{Kos78}, \cite[Section 5]{Pre02}), the restriction 
of 
$J_ n \psi_f$ 
to $ J_n (\Slo_f\cap  \g_{reg}) $ 
is smooth and surjective for any $n$ as well
(\cite[Remark 2.10]{EinMus} or \cite[\S3.4.3]{Fre07}). 
Therefore the morphism 
$(J_n \psi_f )^* \colon  \C[J_n \g]^{J_n G} \to J_n R_f$ 
is an embedding for any $n$. 
Moreover, the restriction 
of $\J \psi_f$ 
to $\J (\Slo_f\cap  \g_{reg}) $ 
is (formally) smooth and surjective, 
whence
the morphism 
$(\J \psi_f )^* \colon \C[\J \g]^{\J G} \to \J R_f$ 
is an embedding of vertex Poisson algebras. 
%The above discussion shows that its image lies in 
%the vertex Poisson center of $\J R_f$. 
%Hence {\color{red}we get an embedding of vertex Poisson algebras 
%from $\C[\J \g]^{\J G}$ into $\mc{Z}(\J R_f)$.}
%\commentA{Is it useful? We already got this embedding combining 
%the map line 10 and the isomorphism line 16.  
%Should we remove this, and all the paragraph starting from 
%^^ ^^ Let $\g_{reg}$ be the set of regular elements of $\g$'' ?}

The aim of this section is to proof 
the following result. 

\begin{Th} \label{Th:main-result} 
The morphism $(\J \psi_f )^*$ induces an isomorphism 
of vertex Poisson algebras 
between $\C[\J \g]^{\J G}$ and the vertex Poisson center of 
$\C[\J \Slo_f]$.  
Moreover, $\C[\J \Slo_f]$ is free over its vertex Poisson center.  
%$\mc{Z}(\C[\J \Slo_f])\cong \C[\J \g]^{\J G}$. 
\end{Th}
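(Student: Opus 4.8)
The plan is to prove the two assertions—that $(\J\psi_f)^*$ surjects onto the vertex Poisson center $\Zent(\C[\J\Slo_f])$ and that $\C[\J\Slo_f]$ is free over that center—by reducing everything to the contracting $\C^*$-action and its associated graded. Since we already established that $(\J\psi_f)^*$ is an embedding, for the isomorphism claim it suffices to check that its image is precisely the full vertex Poisson center. One inclusion is immediate: $\C[\J\g]^{\J G}=\Zent(\C[\J\g])$ maps into $\Zent(\C[\J\Slo_f])$ by the construction of the morphism through the BRST cohomology. For the reverse inclusion, I would take a central element $z\in\Zent(\C[\J\Slo_f])$ and show it lies in the image. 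The key leverage is Theorem~\ref{Th:constant-fiber}: any $z$ in the vertex Poisson center is constant on every fiber $(\J\psi_f)^{-1}(\xi)$. Since the fibers of $\J\psi_f$ are exactly the fibers of the map whose coordinate functions are $\overline{T^j p_i}$, a function that is constant on all fibers of $\J\psi_f$ must be expressible as a function of these $\overline{T^j p_i}$, i.e.\ must lie in the image of $(\J\psi_f)^*$. The care required here is that ``constant on fibers'' yields membership in the subalgebra generated by the $\overline{T^j p_i}$ only once one knows the fibers are reduced and irreducible and that $\J\psi_f$ is flat with the right dimensions—but Lemma~\ref{Lem:reduced} furnishes reducedness and irreducibility of every fiber, so the fiberwise-constant function descends to a regular function on the base $\J(\g/\!/G)$.

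For the freeness, the natural strategy is to pass to the associated graded with respect to the Slodowy filtration $\mathscr{F}_\bullet$ and exploit that ${\rm gr}\,\mathscr{I}_{\infty,\xi}=\mathscr{I}_{\infty,0}$, which was observed in the proof of Lemma~\ref{Lem:reduced}. First I would establish the freeness at the finite level $J_n R_f$ over the subalgebra $\C[J_n\g]^{J_n G}$ generated by the $\overline{T^j p_i}$ with $j\le n$, and then take the limit $n\to\infty$. At each finite level, the point is that the family $\J\psi_f$ (restricted to level $n$) is faithfully flat: the zero fiber $(J_n\psi_f)^{-1}(0)$ is a reduced complete intersection of the expected dimension by Proposition~\ref{Pro:zero-fiber}, and the $\overline{T^j p_i}$ form a regular sequence. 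Flatness plus the polynomial (graded, hence free) structure of the base ring $\C[J_n\g]^{J_n G}$—which is a polynomial ring on the $T^j p_i$—should give that $J_n R_f$ is free as a module over it. Freeness can be detected graded-locally: because the $\C^*$-action is contracting with positive weights and finite-dimensional graded pieces, a graded module over a graded polynomial ring that is flat is automatically free.

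The main obstacle I anticipate is the passage from a complete-intersection/flatness statement at each finite jet level to an honest module-freeness statement, and then its compatibility with the inverse limit defining $\J R_f$. Concretely, one must verify that the $\overline{T^j p_i}$ genuinely form a regular sequence in $J_n R_f$—which is where Proposition~\ref{Pro:zero-fiber}'s assertion that $(J_n\psi_f)^{-1}(0)$ is a complete intersection of codimension $(n+1)\ell$ is doing the real work—and that freeness survives the projective limit. The cleanest route is to fix a homogeneous basis of $J_n R_f/\mathscr{I}_{n,0}=J_n(\Slo_f\cap\mc N)$ over $\C$, lift it to $J_n R_f$, and prove by a degree/contracting-action argument that these lifts form a free basis of $J_n R_f$ over $\C[J_n\g]^{J_n G}$; compatibility of these bases across different $n$ then produces a basis of $\J R_f$ over $\C[\J\g]^{\J G}$ in the limit. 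The delicate bookkeeping is ensuring that the graded pieces match up in the limit so that no generators are lost or created, but the positivity of the grading guarantees each graded component stabilizes at finite level, so the inverse-limit argument goes through.

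Combining these, since $\C[\J\g]^{\J G}\cong\Zent(\C[\J\Slo_f])$ by the first part and $\C[\J\Slo_f]=\J R_f$ is free over $\C[\J\g]^{\J G}$ by the second, the theorem follows.
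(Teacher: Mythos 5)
Your proposal has the same overall architecture as the paper's proof (one inclusion from the BRST construction, the converse via Theorem \ref{Th:constant-fiber}, freeness via the complete-intersection/flatness/graded machinery), but the crux of the isomorphism part --- passing from ``$z$ is constant on every fiber of $\J\psi_f$'' to ``$z$ lies in the image of $(\J\psi_f)^*$'' --- is asserted rather than proved, and the hypotheses you name do not deliver it as stated. Constancy on fibers a priori produces only a \emph{set-theoretic} function on the base; the entire difficulty is to show that this function is regular. The paper accomplishes this in four steps, none of which appear in your proposal: (i) reduce to a finite jet level, writing $z=z_n\in J_n R_f$ for $n$ large, so that one works with varieties of finite type; (ii) reduce to $z$ homogeneous for the Slodowy grading, which guarantees that $z_n\colon J_n\Slo_f\to\C$ is dominant; (iii) apply Lemma \ref{Lem:Goodman-Wallach} to the dominant maps $J_n\psi_f$ and $z_n$ to obtain a \emph{rational} map $\tilde z_n$ on $J_n(\g/\!/G)$; (iv) upgrade $\tilde z_n$ from rational to regular via Lemma \ref{Lem:rational-regular-morphisms}, whose proof uses normality and irreducibility of the finite-dimensional source and target together with flatness of $J_n\psi_f$. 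Your proposal instead credits the descent to reducedness and irreducibility of the fibers (Lemma \ref{Lem:reduced}) plus flatness; but in the paper those fiber properties feed into Theorem \ref{Th:constant-fiber} itself (via Propositions \ref{Pro:zero-fiber} and \ref{Pro:fiber-cores}), while the descent step rests on properties of the \emph{total spaces} and on dominance. To make a genuinely fiber-based argument rigorous you would need faithfully flat descent, which requires faithful flatness of $\J\psi_f$ and reducedness of the fiber product $\J\Slo_f\times_{\J(\g/\!/G)}\J\Slo_f$ --- neither is established, and both are delicate precisely because you run the argument at the infinite level, where the schemes are neither of finite type nor Noetherian. The finite-level reduction, which you do perform in the freeness part, is missing exactly where it is indispensable.

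The freeness part is essentially fine: the regular sequence coming from the complete-intersection property of $(J_n\psi_f)^{-1}(0)$ (Proposition \ref{Pro:zero-fiber}), flatness, ``graded flat implies free'', and stabilization of graded pieces in the limit is a reconstruction of the Eisenbud--Frenkel argument that the paper simply cites, so no objection there beyond the bookkeeping you yourself flag.
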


Note that % \commentT{Changed a little}
$\C[\J \g]^{\J G}\cap \C[\g]=\C[\g]^G$
and
$\mc{Z}(\C[\J \Slo_f])\cap \C[\Slo_f]=\mc{Z}(\C[\Slo_f])$,
the Poisson center
of $\C[\Slo_f]$.
Hence
from the above theorem we recover the well-known 
result of Ginzburg-Premet  \cite[Question~5.1]{Pre07}
which states that
\begin{align}
\mc{Z}(\C[\Slo_f])\cong \C[\g]^G.
\label{Ginzburg-Premet}
\end{align}

To prove Theorem \ref{Th:main-result} we first state some preliminary results. 

\begin{Lem}[{\cite[Theorem A.2.9]{GooWal09}}]
\label{Lem:Goodman-Wallach}
Let $X,Y,Z$ be irreducible affine varieties. 
Assume that $f \colon X \to Y$ and $h \colon X \to Z$ are dominant 
morphisms such that 
$h$ is constant on the fibers of $f$. Then there exists a rational map 
$g \colon Y \to Z$ making the following diagram commutative: 
\xymatrix{ X \ar[r]^{f} \ar[d]_{h} & Y \ar@{-->}[ld]^{g} \\ Z &}
\end{Lem}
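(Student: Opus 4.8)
The plan is to descend the whole problem to the level of function fields and to reduce the existence of $g$ to a single inclusion of subfields of $\C(X)$. Since $f$ and $h$ are dominant morphisms of irreducible affine varieties, they induce injections of function fields $f^* \colon \C(Y) \hookrightarrow \C(X)$ and $h^* \colon \C(Z) \hookrightarrow \C(X)$. Giving a dominant rational map $g \colon Y \to Z$ with $g \circ f = h$ as rational maps is the same as giving a $\C$-algebra homomorphism $g^* \colon \C(Z) \to \C(Y)$ with $f^* \circ g^* = h^*$ (recall that pullback is contravariant, so $(g\circ f)^* = f^* \circ g^*$). Hence it suffices to prove the inclusion
$$h^*(\C(Z)) \subseteq f^*(\C(Y))$$
inside $\C(X)$: granting this, the composite $(f^*)^{-1}\circ h^* \colon \C(Z) \to \C(Y)$ is a well-defined homomorphism of finitely generated field extensions of $\C$, so it corresponds to a dominant rational map $g$, and by construction $f^*\circ g^* = f^*\circ (f^*)^{-1}\circ h^* = h^*$, which is exactly the required commutativity.

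To establish the inclusion I would fix $\phi \in \C(Z)$ and set $\theta := h^*\phi = \phi \circ h \in \C(X)$. Because $h$ is constant on the fibers of $f$, the function $\theta$ is constant on the fibers of $f$ as well, so everything reduces to the following geometric assertion: \emph{any rational function $\theta$ on $X$ that is constant on the general fiber of $f$ lies in $f^*(\C(Y))$.} To prove this, I would consider the rational map $(f,\theta) \colon X \to Y \times \mathbb{A}^1$, let $W$ denote the Zariski closure of its image, and let $\pi \colon W \to Y$ be the first projection; then $W$ is irreducible and $\pi$ is dominant. The hypothesis that $\theta$ is constant on each general fiber $f^{-1}(y)$ says precisely that $(f,\theta)$ maps $f^{-1}(y)$ to a single point $(y,c_y)$, and since the image of $(f,\theta)$ is dense in $W$ this forces the general fiber of $\pi$ to be a single point.

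The key point now is that a dominant morphism of irreducible varieties over $\C$ whose general fiber is a single point is birational, because in characteristic $0$ the generic number of points in a fiber equals the degree of the associated field extension. Thus $\pi$ is birational, the pullback $\pi^* \colon \C(Y) \xrightarrow{\ \sim\ } \C(W)$ is an isomorphism, and setting $\psi := (\pi^*)^{-1}\big(t|_W\big) \in \C(Y)$, where $t$ is the coordinate on $\mathbb{A}^1$, we get $t|_W = \psi \circ \pi$. Tracing definitions, and using $\pi \circ (f,\theta) = f$, we obtain $\theta = t\circ (f,\theta) = \psi\circ\pi\circ(f,\theta) = \psi\circ f = f^*\psi$, so $\theta \in f^*(\C(Y))$. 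Applying this to every $\phi$ yields $h^*(\C(Z)) \subseteq f^*(\C(Y))$ and hence the map $g$. I expect the main obstacle to lie in the careful handling of general fibers: one must invoke Chevalley's theorem so that the image of $(f,\theta)$ is constructible and $W$ is controlled, and use generic flatness and the theorem on the dimension of fibers to guarantee that for $y$ in a dense open of $Y$ the fiber $\pi^{-1}(y)$ really is the closure of $(f,\theta)\big(f^{-1}(y)\big)$, so that ``single value on $f^{-1}(y)$'' genuinely translates into ``$\pi^{-1}(y)$ is one point''; only then does the characteristic-$0$ identification of generic fiber cardinality with field degree give birationality of $\pi$.
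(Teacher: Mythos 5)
Your proof is correct, but note that the paper contains no proof of this lemma to compare against: it is quoted directly from Goodman--Wallach \cite[Theorem A.2.9]{GooWal09}, and the citation is the paper's entire justification. Your function-field descent is essentially the standard argument behind that cited theorem: reduce everything to the inclusion $h^*(\C(Z))\subseteq f^*(\C(Y))$ inside $\C(X)$, and prove that a rational function $\theta$ on $X$ that is constant on the general fiber of $f$ descends to $Y$, by taking the closure $W$ of the image of $(f,\theta)$ in $Y\times\mathbb{A}^1$ and showing the projection $\pi\colon W\to Y$ is birational. The two points you flag as delicate are exactly where the content lies, and both go through: since the image of $(f,\theta)$ is constructible and dense in $W$ it contains a dense open $W_0$, and the fiber-dimension theorem applied to the components of $W\setminus W_0$ shows that $\pi^{-1}(y)\cap W_0$ is dense in $\pi^{-1}(y)$ for general $y$, so the generic fiber of $\pi$ is a single point; then $\dim W=\dim Y$, and the characteristic-zero identification of generic fiber cardinality with $[\C(W):\C(Y)]$ gives degree one, hence birationality. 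A minor observation: $\theta=\phi\circ h$ is only defined on $h^{-1}(\operatorname{dom}\phi)$, but since $h$ is constant on each fiber of $f$, the function $\theta$ takes at most one value on each fiber's intersection with its domain, which is all your argument needs; and because $h^*$ is injective, the $g$ you produce is in fact dominant, slightly more than the lemma asserts.
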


\begin{Lem}\label{Lem:rational-regular-morphisms}
Let $X$ and $Y$ be two 
normal irreducible affine varieties, 
and $f \colon X \to Y$ a flat morphism. 
Then $\C(Y) \cap \C[X] = \C[Y]$. 
Here, we view $\C[Y]$ as a subalgebra of $\C[X]$ 
using $f^* \colon \C[Y] \to \C[X]$.  
%Let us prove the following\marginpar{\tiny{State this as a more general lemma!!!}}: 
%\begin{align}\label{eq:rational}
%\C(J_m Y) \cap \C[J_m \Slo_f] = \C[J_mY]. 
%\end{align}
\end{Lem}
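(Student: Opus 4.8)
The plan is to translate the statement into commutative algebra. Write $A=\C[Y]$ and $B=\C[X]$, so that $f^{*}$ realizes $A$ as a subring of $B$, and set $K=\C(Y)=\on{Frac}(A)$, regarded inside $L=\C(X)=\on{Frac}(B)$. Since $X$ and $Y$ are irreducible, $A$ and $B$ are domains, and since $Y$ is normal, $A$ is an integrally closed Noetherian domain. The inclusion $\C[Y]\subseteq \C(Y)\cap\C[X]$ is obvious, so the entire content is the reverse inclusion $K\cap B\subseteq A$; fix $\phi\in K\cap B$ and aim to show $\phi\in A$.

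First I would reduce the problem to a check in codimension one. Because $A$ is a normal Noetherian domain, it is a Krull domain, so $A=\bigcap_{\p}A_{\p}$, where $\p$ runs over the height-one primes of $A$ and each localization $A_{\p}$ is a discrete valuation ring with valuation $v_{\p}$ on $K$. Hence it suffices to prove $v_{\p}(\phi)\ge 0$ for every height-one prime $\p$ of $A$.

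Assume for contradiction that $v_{\p}(\phi)=-m<0$ for some such $\p$, choose a uniformizer $\pi$ of $A_{\p}$, and write $\phi=u\pi^{-m}$ with $u\in A_{\p}^{\times}$ and $m\ge 1$. Here I would invoke flatness through its consequence that $\Spec B\to\Spec A$ is surjective: there is a prime $\mf{q}\subset B$ with $\mf{q}\cap A=\p$. The induced homomorphism $A_{\p}\to B_{\mf{q}}$ is then local, so $u$ maps to a unit of $B_{\mf{q}}$ while $\pi$ maps to a non-unit. But $\phi\in B\subseteq B_{\mf{q}}$, so inside $L$ one has $u=\pi^{m}\phi\in\pi B_{\mf{q}}\subseteq \mf{q}B_{\mf{q}}$, contradicting that $u$ is a unit. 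Therefore $v_{\p}(\phi)\ge 0$ for all $\p$, giving $\phi\in A$ and hence $\C(Y)\cap\C[X]=\C[Y]$.

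The step bearing the real weight is the existence of a prime of $B$ lying over the prescribed height-one prime $\p$, and this is exactly where one needs $f$ to be \emph{surjective} onto $Y$, equivalently faithfully flat. Flatness by itself is not enough: the open immersion $\Spec\C[t,t^{-1}]\hookrightarrow\Spec\C[t]$ is a flat morphism of normal irreducible affine varieties for which $\C(Y)\cap\C[X]=\C[t,t^{-1}]\neq\C[Y]$, so I would make the surjectivity hypothesis explicit. This costs nothing in the applications, where $f$ is the faithfully flat morphism $\J\psi_f$ or one of its finite truncations $J_n\psi_f$, whose surjectivity has already been recorded; lying-over then holds and the argument above applies verbatim.
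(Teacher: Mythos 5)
Your proof is correct, and it both repairs and simplifies the lemma. Your counterexample is valid: the open immersion $\Spec\C[t,t^{-1}]\hookrightarrow\Spec\C[t]$ is a flat morphism of normal irreducible affine varieties for which $\C(Y)\cap\C[X]=\C[t,t^{-1}]\neq\C[Y]$, so the statement is false with flatness alone, and the surjectivity (faithful flatness) hypothesis you add is genuinely needed. The paper's own proof has the same hidden assumption: its first step asserts that the image $Y'$ of the smooth locus $X'$ of $X$ is open with $Y\setminus Y'$ of codimension at least $2$, but the fiber-dimension count justifies this only for points lying in the image of $f$ --- a point of $Y$ can fail to lie in $Y'$ because its fiber is \emph{empty}, which flatness does not exclude (in your example $Y\setminus Y'=\{0\}$ has codimension $1$, and the paper's argument collapses there). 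Since the lemma is applied in the proof of Theorem \ref{Th:main-result} to the morphisms $J_n\psi_f$, which the paper records as flat \emph{and} surjective, the amendment is harmless in the application; but you are right that it must be made.

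Granting surjectivity, your argument takes a genuinely different and more economical route than the paper's. The paper localizes at closed points: for $y\in Y'$ and $x$ a smooth point of the fiber over $y$, the local rings $\mc{O}_{Y,y}\to\mc{O}_{X,x}$ are regular, hence factorial; writing $a=p/q$ with $p,q$ coprime, flat base change preserves the injectivity of multiplication by $p$ modulo $q$, and regularity of $a$ on $X$ forces $q$ to be a unit in $\mc{O}_{X,x}$, hence in $\mc{O}_{Y,y}$; finally normality of $Y$ (algebraic Hartogs) extends $a$ from $Y'$ across the codimension-$\geq 2$ complement. You instead use the Krull decomposition $\C[Y]=\bigcap_{\on{ht}\p=1}\C[Y]_{\p}$ into discrete valuation rings and a valuation computation at each height-one prime, with lying-over supplying the prime $\mf{q}$ upstairs. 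Note what this buys: your proof never uses normality (or any smoothness properties) of $X$, only that $\C[X]$ is a domain; and in fact it does not use flatness at all, since for finite-type morphisms over $\C$ surjectivity on closed points already gives surjectivity on all of $\Spec$ (Chevalley constructibility plus the Jacobson property), hence lying-over. So your route reveals that the operative hypotheses are ``$Y$ normal, $X$ irreducible, $f$ surjective,'' whereas the paper's argument genuinely invokes flatness twice (equidimensionality of fibers and exactness of base change) and normality of both varieties.
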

 
 \begin{proof}
 Since $X$ is normal and the fibers of $f$ 
 are all of dimension $\dim X - \dim Y$, 
 the image of the set $X'$ of smooth points of $X$ is an open subset 
 $Y'$ of $Y$ 
 such that $Y \setminus Y'$ has codimension at least $2$. 
 
 Let $y$ in $Y'$ and $x\in f^{-1}(y)\subset X'$. 
 Then we have a flat  extension of the local rings  
$\mc{O}_{Y,y} \to \mc{O} _{X,x}$. Since $\mc{O}_{Y,y}$ and 
$\mc{O} _{X,x}$ are regular local rings, 
they are factorial. For $a\in \C(Y)\cap \C[X]$, write 
$a = p/q$ with $p,q$ relatively prime 
elements of $\C[Y]$ .
Since $p,q$ are relatively prime, the multiplication by $p$ induces an injective 
homomorphism 
$$\mc{O}_{Y,y}/ q \mc{O}_{Y,y} \rightarrow  \mc{O}_{Y,y}/q  \mc{O}_{Y,y}.$$
Since $\mc{O} _{X,x}$ is flat over $\mc{O}_{Y,y}$, the base change 
$\mc{O} _{X,x}\otimes_{\mc{O}_{Y,y}} -$ yields an injective homomorphism 
$$\mc{O} _{X,x}/ q \mc{O} _{X,x}\rightarrow  \mc{O} _{X,x} /q \mc{O} _{X,x}.$$ 
Hence $p$ and $q$ are relatively prime in $\mc{O} _{X,x}$. 
In addition, the image of 1 is $0$ because $a=p/q$ is regular in $X$. 
%$q \in \C^*$ and so $z\in \C[Y]$. 
%By the flatness argument, 
As a result, $q$ is invertible in $\mc{O}_{X,x}$. 

Since the maximal ideal of $\mc{O}_{Y,y}$ 
is the intersection of  $\mc{O}_{Y,y}$ with the maximal ideal of  $\mc{O}_{X,x}$, 
$q$ is invertible in $\mc{O}_{Y,y}$, 
and so $a$ is in $\mc{O}_{Y,y}$. 
As a result, $a$ is regular on $Y'$ and then extends to a regular function 
on $Y$ since $Y$ is normal. 
 \end{proof}
 
%To prove \eqref{eq:rational}, 
%we follow the arguments of \cite{Bass-Connel-Wright}. 
%Write $z=p/q$, with $p,q$ relatively prime 
%elements of $\C[J_m Y]$. 
%Since $p,q$ are relatively prime, the multiplication by $p$ induces an injective 
%homomorphism 
%$$\C[J_m Y] / q \C[Y] \rightarrow  \C[J_m Y]/q  \C[J_m Y].$$
%Since $\C[J_m \Slo_f]$ is flat over $ \C[J_m Y]$, the base change 
%$\C[J_m \Slo_f] \otimes_{ \C[J_m Y]} -$ yields an injective homomorphism 
%$$\C[J_m \Slo_f]/ q \C[J_m \Slo_f] \rightarrow  \C[J_m \Slo_f] /q \C[J_m \Slo_f].$$ 
%The image of 1 is $0$ because $z$ is regular.  
%Hence $p$ and $q$ are relatively prime in $\C[\Slo_f]$. 
%Since $z=p/q \in \C[J_m \Slo_f]$ we deduce that $q \in \C^*$ and so $z\in 
%\C[J_m Y]$. 

%\begin{Rem} {\em Charbonnel's remark!!!!}
%In fact, we have the following stronger result (but the above argument is perhaps 
%easier): ^^ ^^ {\em Let $X$ and $Y$ 
%be two irreducible normal varieties, 
%and $f \colon  X \to Y$ a flat morphism. Then  
%$\C(Y) \cap \C[X] = \C[Y]$}''. 
%\marginpar{\tiny{This is nicer if I can prove it!!!}} 
%\end{Rem}

\begin{proof}[Proof of Theorem \ref{Th:main-result}]
%\red{Many typos in this proof!!!}
Let us prove the first assertion of the theorem. 
We view the algebra $J_n (\C[\g]^G)$ as a subalgebra of 
$J_n R_f$ for any $n$. 
We have already noticed that the inclusion  
$J_\infty (\C[\g]^G) \subset \mc{Z}(\J R_f)$ holds, with 
$\mc{Z}(\J R_f)$ the vertex Poisson center of $\J R_f$. 
Conversely, we have to prove that any element $z$ in the vertex 
Poisson center $\mc{Z}(\J R_f)$ 
can be lifted to a an element of $\J (\C[\g]^G)$. 

Since $\J \Slo_f$ is the projective limit of the projective system 
$(J_n \Slo_f, \pi_{m,n})$,  
the algebra $\J R_f$ is the inductive limit of the algebras $J_n R_f$. 
The injection $j_n$ from $J_n R_f$ to $\J R_f$ is defined by
$$j_n(\mu_n) (\gamma) := \mu_n(\pi_{\infty,n}(\gamma)),\qquad \mu_n \in J_n R_f, 
\; \gamma \in \J\Slo_f.$$

Let $z\in\mc{Z}(\J R_f) \subset \J R_f$. 
As $z \in \J R_f$, $z=z_n \in J_n R_f$ for $n$ big enough,  
where $z_n$ is such that 
$$z_n (\gamma_n) = z(j_n (\gamma_n)), \quad 
z(\gamma) = z_n(\pi_{\infty,n}(\gamma)), \qquad \gamma_n \in J_n \Slo_f, \; \gamma\in \J \Slo_f.$$ 
By Theorem \ref{Th:constant-fiber}, $z$ is constant on each fibers of $\J \psi_f$. 
As a consequence, $z_n$ is constant on each fibers of $J_n \psi_f$. 
Indeed, let $\xi_n \in J_n Y$, and  $\gamma_n,\gamma'_n \in J_n \psi_f^{-1}(\xi_n)$. 
Then $j_n(\gamma_n),j_n(\gamma'_n)$ are in $\J \psi_f ^{-1}(j_n(\xi_n))$ 
since 
$$\J \psi_f (j_n(\gamma_n)) = j_n(J_n \psi_f (\gamma_n))= j_n(\xi_n)= 
 j_n(J_n \psi_f (\gamma'_n))=\J \psi_f (j_n(\gamma'_n)) .$$
Hence,
\begin{align*}
z_n(\gamma_n) = z(j_n(\gamma_n)) =z(j_n(\gamma'_n)) =z_n(\gamma'_n)
\end{align*}
since $z$ is constant on $\J \psi_f ^{-1}(j_n(\xi_n))$. 

If $z$ is a constant function, that is, $z \in\C$, then clearly $z$ lies in 
the vertex Poisson center of $\J \C[\g^*]$. 
Hence, one can assume that $z$ is not constant. 
Furthermore, there is no loss of generality in assuming 
that $z$ is homogeneous for the 
{Slodowy grading} on $\J R_f$ 
%induced from the contracting $\C^*$-action 
%$\mu_\infty$  on $\J \Slo_f$ (see Section \ref{sec:adjoint-quotient}) 
because $\mc{Z}(\J R_f)$ 
is Slodowy invariant. Thus for any $t\in\C^*$, $t.z=t^k z$  
for some $k \in\Z_{\ge 0}$. So one can assume that 
the morphisms $z \colon \J \Slo_f \to \C$ 
and $z_n \colon J_n \Slo_f \to \C$ 
are dominant. 

Hence by Lemma \ref{Lem:Goodman-Wallach}, 
$z_n \in \C[J_n \Slo_f]$ induces a rational morphism $\tilde{z}_n$ 
on $J_n (\g /\! / G)$ since $z_n$ is constant on the fibers of the dominant 
morphism $J_n \psi_f$. 

As $\Slo_f$ and $\g/ \! / G$ are affine spaces, $J_n \Slo_f$ and 
$J_n( \g /\! / G)$ are affine spaces for any $n$. In particular, 
$J_n \Slo_f$ and 
$J_n( \g /\! / G)$ are normal and irreducible for any $n$. 
Therefore Lemma \ref{Lem:rational-regular-morphisms} 
can be applied because the morphism $J_n \psi_f \colon 
J_n \Slo_f \to J_n( \g /\! / G)$ is flat for any $n$. 
So, 
$\tilde{z}_n \in \C[J_n (\g /\! / G)]$. 
This holds for any $n$ such that $z=z_n$. 
Since $z=z_n$ for $n$ big enough, we deduce that 
$z$ can be lifted to an element of 
$\C[\J (\g /\! / G)]=\C[\J \g]^{\J G}$, whence the first part of the theorem.

It remains to prove the freeness. 
Since $\Slo_f \cap \mc{N}$ enjoys the same 
geometrical properties as $\mc{N}$, that is, 
$\Slo_f \cap \mc{N}$ is reduced, irreducible and is 
a complete intersection {with rational singularities}, the arguments of 
\cite[Theorem A.4]{EisFre01} can be applied %\commentA{OK?}
in order to get that $\C[\J \Slo_f]$ is free over its vertex Poisson 
center (see also \cite[Proposition 2.5 (ii)]{ChaMor16}). This concludes the proof of the theorem. 
\end{proof}

\section{Center of W-algebras} \label{sec:W-algebras}
Let $V^k(\g)$ be the universal affine vertex algebra associated 
with $\g$ at level 
$k$, and 
let $\W^k(\g,f)$ be the (affine) W-algebraassociated with 
$(\g,f)$ at level $k \in\C$. 
The W-algebra $\W^k(\g,f)$ is defined by the quantized Drinfeld-Sokolov 
reduction associated with $f$
(\cite{FF90,KacRoaWak03}).
%\commentT{Removed the definition because tt wasn't correct}
%: 
%$$\W^k(\g,f)= H^0 (V^k(\g) \otimes \mc{F}(\m),\hat{Q}_{(0)}),$$
%where $\mc{F}(\m)$ is the {\em free superfermions} vertex algebra 
%associated with $\m$ (see \cite{Ara05}). 

The embedding 
$Z(V^k(\g)) \hookrightarrow V^k(\g)$ induces a vertex algebra 
homomorphism 
$$Z(V^k(\g)) \longrightarrow Z(\W^k(\g,f))$$ 
for any $k \in\C$. Here, for $V$ a vertex 
algebra $V$, $Z(V)$ denotes the vertex center of $V$, 
that is, 
$$Z(V)=\{z \in V \mid a_{(n)} z = 0 \text{ for all } a \in V, n \ge 0\}.$$
Both $Z(V^k(\g))$ and $Z(\W^k(\g,f))$ are trivial unless $k=cri$ 
is the critical level 
$cri=-h^\vee$ with $h^\vee$ the dual Coxeter number of $\g$. 
For $k=cri$,   $\mf{z}(\affg):=Z(V^{cri}(\g))$ is known as the 
{Feigin-Frenkel 
center} \cite{FeiFre92}. 

\begin{Th} \label{Th:W-algebra}
The embedding $\mf{z}(\affg)\hookrightarrow V^{cri}(\g)$ induces an isomorphism 
$$\mf{z}(\affg) \stackrel{\sim}{ \longrightarrow} Z(\W^{cri}(\g,f))$$
and we have 
$ \gr Z( \W^{cri}(\g,f)) \cong \mc{Z} (\C[\J \Slo_f])$.
\end{Th}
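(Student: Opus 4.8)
The plan is to derive the theorem from the purely classical statement of Theorem~\ref{Th:main-result} by a standard associated-graded argument, transporting everything through the Li filtration. First I would equip both $V^{cri}(\g)$ and $\W^{cri}(\g,f)$ with their canonical Li filtrations and recall the identifications of vertex Poisson algebras $\gr V^{cri}(\g)\cong\C[\J\g]$ and $\gr\W^{cri}(\g,f)\cong\C[\J\Slo_f]=\J R_f$ (the latter being the statement that the singular support of $\W^{cri}(\g,f)$ is $\J\Slo_f$, cf.\ \cite{Ara09b,Ara12}). The homomorphism $\phi\colon\mf{z}(\affg)=Z(V^{cri}(\g))\to Z(\W^{cri}(\g,f))$ induced by the quantized Drinfeld--Sokolov reduction is filtered, and the crucial step is to identify its associated graded $\gr\phi$ with the comorphism $(\J\psi_f)^*$. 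This is legitimate because the classical limit of the quantum BRST reduction is exactly the vertex Poisson (classical BRST) reduction described just before Theorem~\ref{Th:main-result}; thus $\gr\phi$ factors as $\gr\mf{z}(\affg)\hookrightarrow\mc{Z}(\C[\J\g])=\C[\J\g]^{\J G}\xrightarrow{(\J\psi_f)^*}\C[\J\Slo_f]$, and by Feigin--Frenkel \cite{FeiFre92,Fre07} the first inclusion is an equality, $\gr\mf{z}(\affg)=\C[\J\g]^{\J G}$.

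Next I would record the general fact that for any vertex algebra $V$ with its Li filtration one has $\gr Z(V)\subseteq\mc{Z}(\gr V)$: if $a_{(n)}z=0$ for all $a\in V$ and $n\ge0$, then the symbol $\sigma(z)$ is annihilated by every $\bar a_{(n)}$, $n\ge0$, in $\gr V$, so $\sigma(z)\in\mc{Z}(\gr V)$. Applied to $\W^{cri}(\g,f)$ this gives $\gr Z(\W^{cri}(\g,f))\subseteq\mc{Z}(\C[\J\Slo_f])$. On the other hand, since $\phi$ is filtered and $\im\phi\subseteq Z(\W^{cri}(\g,f))$, the elementary inclusion $\im(\gr\phi)\subseteq\gr(\im\phi)$ for filtered maps yields the chain
$$
\mc{Z}(\C[\J\Slo_f])=\im(\gr\phi)\subseteq\gr(\im\phi)\subseteq\gr Z(\W^{cri}(\g,f))\subseteq\mc{Z}(\C[\J\Slo_f]),
$$
where the leftmost equality is Theorem~\ref{Th:main-result} (combined with $\gr\mf{z}(\affg)=\C[\J\g]^{\J G}$). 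Hence all terms coincide, which already proves the second assertion $\gr Z(\W^{cri}(\g,f))\cong\mc{Z}(\C[\J\Slo_f])$ and, in particular, $\gr(\im\phi)=\gr Z(\W^{cri}(\g,f))$.

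Finally I would conclude that $\phi$ is an isomorphism onto $Z(\W^{cri}(\g,f))$. Injectivity holds because $\gr\phi=(\J\psi_f)^*$ is injective by Theorem~\ref{Th:main-result} and the Li filtration on $\mf{z}(\affg)\subset V^{cri}(\g)$ is separated. For surjectivity I would use $\gr(\im\phi)=\gr Z(\W^{cri}(\g,f))$ together with the fact that the Li filtration is exhaustive and, within each conformal weight space (all finite dimensional), finite, hence separated: a standard descending argument weight space by weight space then upgrades the equality of associated gradeds to $\im\phi=Z(\W^{cri}(\g,f))$. Combining injectivity and surjectivity gives the desired isomorphism $\mf{z}(\affg)\xrightarrow{\sim}Z(\W^{cri}(\g,f))$.

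I expect the main obstacle to be the very first identification, namely that $\gr\phi$ is genuinely $(\J\psi_f)^*$ and that $\gr\W^{cri}(\g,f)$ carries the correct vertex Poisson structure $\C[\J\Slo_f]$; this is precisely the point at which the earlier argument of \cite{A11} was incomplete. It becomes valid here only because Theorem~\ref{Th:main-result} now supplies the missing classical input, the isomorphism $\mc{Z}(\C[\J\Slo_f])\cong\C[\J\g]^{\J G}$ together with the freeness of $\C[\J\Slo_f]$ over its vertex Poisson center, which is what forces the associated-graded map to be an isomorphism rather than merely an embedding.
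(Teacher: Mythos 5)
Your proof is correct and follows essentially the same route as the paper: both reduce the statement to the associated graded level via the Li filtration, combine Feigin--Frenkel's $\gr \mf{z}(\affg)\cong \C[\J\g]^{\J G}$ with $\gr \W^{cri}(\g,f)\cong\C[\J\Slo_f]$ and the inclusion $\gr Z(\W^{cri}(\g,f))\subseteq \mc{Z}(\gr\W^{cri}(\g,f))$, and then use Theorem~\ref{Th:main-result} to force the resulting sandwich of maps to consist of isomorphisms. Your write-up merely makes explicit what the paper leaves implicit --- the identification of $\gr\phi$ with $(\J\psi_f)^*$, the chain $\im(\gr\phi)\subseteq\gr(\im\phi)\subseteq\gr Z(\W^{cri}(\g,f))$, and the separatedness/finite-dimensionality argument upgrading the graded isomorphism to an honest one.
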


\begin{proof}
Recall that there is an obvious
vertex algebra homomorphism 
$\mf{z}(\affg) \to Z(\W^{cri}(\g,f))$, see \cite{A11}. 
Hence it is sufficient to show that the induced homomorphism
$\gr \mf{z}(\affg)\ra \gr  Z(\W^{cri}(\g,f))$ is an isomorphism.

First, 
we have (\cite{FeiFre92}) 
%The graded space for the Li filtration on $V^{cri}(\g)$ gives: 
$$\gr \mf{z}(\affg) \cong \C[\J \g]^{\J G}.$$
On the other hand,
we have (\cite[Theorem 4.17]{Ara09b}) %\commentA{ref for $\gr \mf{z}(\affg) \cong \C[\J \g]^{\J G}$? \cite{FeiFre92}?}
%The graded space for the Li filtration on  $\W^{cri}(\g,f)$ gives 
$$\gr \W^{cri}(\g,f) \cong \C[\J \Slo_f], $$ 
and so 
$$\mc{Z} (\gr \W^{cri}(\g,f) ) \cong \mc{Z} (\C[\J \Slo_f]).$$
%\commentA{Is it ok? I remember something was not correct in the proof}
By Theorem \ref{Th:main-result}, $\mc{Z} (\C[\J \Slo_f]) \cong  \C[\J \g]^{\J G}$, 
which forces the compound map 
$$\mc{Z} (\C[\J \Slo_f]) \cong 
\gr \mf{z}(\affg) \longrightarrow \gr Z( \W^{cri}(\g,f))  \hookrightarrow 
\mc{Z} (\gr \W^{cri}(\g,f) ) \cong \mc{Z} (\C[\J \Slo_f])$$ 
to be an isomorphism. 
%we obtain that 
%$$\gr \mf{z}(\affg) \cong \gr Z(\W^{cri}(\g,f)).$$ 
This completes the proof.
\end{proof}
Theorem \ref{Th:W-algebra} was stated in \cite{A11}, but the proof of the surjectivity was incomplete.
 
Note that
the similar argument  as above using \eqref{Ginzburg-Premet}
recovers 
Premet's result \cite{Pre07} stating that the center of the {\em finite 
W-algebra} $U(\g,f)$ associated with $(\g,f)$ 
is isomorphic to the center of the enveloping algebra $U(\g)$ 
of $\g$. % \commentT{Changed a little}

\newcommand{\etalchar}[1]{$^{#1}$}

%\bibliographystyle{alpha}

%\bibliography{/Users/tomoyuki/Documents/Dropbox/bib/math}

\end{document}